\documentclass[11pt]{amsart}

%~~~~~~~~~~~~~~~~~~~~~~~~~~~~~~~~~~~~~~~~~~~~~~~~~~~~~~~~~~~~~~~~~~~~~~~~~~~~
% Packages
%~~~~~~~~~~~~~~~~~~~~~~~~~~~~~~~~~~~~~~~~~~~~~~~~~~~~~~~~~~~~~~~~~~~~~~~~~~~~

%%% Page style and font
\usepackage{fullpage}  % full page style
%\usepackage[pagewise]{lineno}\linenumbers
% \usepackage[margin=2.5cm, includefoot]{geometry} % full page style
% \usepackage[charter]{mathdesign}  % Concrete font

%Number lines in math envrioments

\usepackage[pagewise,mathlines]{lineno} %% <- remove [mathlines] to omit equation line numbers

\usepackage{etoolbox} %% <- for \pretocmd, \apptocmd and \patchcmd

%% Patch 'normal' math environments:
\newcommand*\linenomathpatch[1]{%
	\cspreto{#1}{\linenomath}%
	\cspreto{#1*}{\linenomath}%
	\csappto{end#1}{\endlinenomath}%
	\csappto{end#1*}{\endlinenomath}%
}
%% Patch AMS math environments:
\newcommand*\linenomathpatchAMS[1]{%
	\cspreto{#1}{\linenomathAMS}%
	\cspreto{#1*}{\linenomathAMS}%
	\csappto{end#1}{\endlinenomath}%
	\csappto{end#1*}{\endlinenomath}%
}

%% Definition of \linenomathAMS depends on whether the mathlines option is provided
\expandafter\ifx\linenomath\linenomathWithnumbers
\let\linenomathAMS\linenomathWithnumbers
%% The following line gets rid of an extra line numbers at the bottom:
\patchcmd\linenomathAMS{\advance\postdisplaypenalty\linenopenalty}{}{}{}
\else
\let\linenomathAMS\linenomathNonumbers
\fi

\linenomathpatch{equation}
\linenomathpatchAMS{gather}
\linenomathpatchAMS{multline}
\linenomathpatchAMS{align}
\linenomathpatchAMS{alignat}
\linenomathpatchAMS{flalign}

%\linenumbers

%%% Page encoding
%\usepackage[brazilian]{babel}  
\usepackage[utf8]{inputenc}

%%% Math packages
\usepackage{amsmath}  % math enhancements, e.g., \text
\usepackage{amsthm}  % theorem-like environments
\usepackage{amssymb}  % extended symbol collection, e.g., \Cup
\usepackage{bm}  % bold math symbols
\usepackage{mathtools}  % for \coloneqq

%%% Lists
\usepackage{enumerate}  % options for enumerate environments, e.g, [(i)]
% \renewcommand\labelitemi{--}  % change item symbol to --
% \usepackage[shortlabels]{enumitem}  % allow non-stereotyped designs

%%% Graphics
\usepackage{graphicx}  % graphics capabilities, e.g., includegraphics
\usepackage[dvipsnames]{xcolor}  % \color commands
\usepackage[small,bf]{caption}  % add caption to figures

\graphicspath{{figures/}}  %  add figures to graphics path
\graphicspath{{../figures/}}  %  add upper figures folder to graphics path

%%% References
\usepackage{breakcites}  % breaks long \cite commands
\usepackage{url}  % clickable \url commands
\usepackage{hyperref}  % clickable \ref commands

\hypersetup{colorlinks, citecolor=black, filecolor=black,
	linkcolor=black, urlcolor=black}

%%% Algorithms
\usepackage{algorithm}
\usepackage[noend]{algpseudocode}

%%% Comments
\usepackage{comment}  % used to hide solutions and comments

%%% Theorem-like environments (It is conflicting with other definitions!)
%\usepackage{ntheorem}  % used to enhance Theorem-like environments

%~~~~~~~~~~~~~~~~~~~~~~~~~~~~~~~~~~~~~~~~~~~~~~~~~~~~~~~~~~~~~~~~~~~~~~~~~~~~
% Theorem-like environments
%~~~~~~~~~~~~~~~~~~~~~~~~~~~~~~~~~~~~~~~~~~~~~~~~~~~~~~~~~~~~~~~~~~~~~~~~~~~~

%%% Plain style: italic text; use same counter
\theoremstyle{plain}

\newtheorem{outtheorem}{Theorem}

\newtheorem{theorem}{Theorem}[section]
\newtheorem{proposition}[theorem]{Proposition}
\newtheorem{lemma}[theorem]{Lemma}
\newtheorem{corollary}[theorem]{Corollary}

% \newtheorem{fact}[theorem]{Fact}

%%% Definition style: % normal text
\theoremstyle{definition}

% Follow 'theorem' counter
\newtheorem{definition}[theorem]{Definition}
\newtheorem{notation}[theorem]{Notation}

% unstarred version has \qed
{\pushQED{\qed}\exampleEnv}
{\popQED\endexampleEnv}
\newenvironment{example*}{\exampleEnv}{\endexampleEnv}

% unstarred version has \qed
{\pushQED{\qed}\remarkEnv}
{\popQED\endremarkEnv}
\newenvironment{remark*}{\remarkEnv}{\endremarkEnv}

% Follow different counters

% Exercise
  % delete [section] to have Exercise 4 instead of Exercise 4.1; also can use [subsection] to have Exercise 4.2.1

% Solution

\ifdefined\showsolution
\newenvironment{solution}
{\pushQED{\qed}\solutionEnv}
{\popQED\endsolutionEnv}
\newenvironment{solution*}{\solutionEnv}{\endsolutionEnv}
\else  % else hide it using \excludecomment from package comment

\excludecomment{solution}
\newenvironment{solution*}{}{}
\excludecomment{solution*}
\fi

% Comment
  % so \comment{} doesn't clash with package comment
\ifdefined\showcomment % if \showcomment, print environment
\newtheorem*{comment}{\normalfont\emph{Comment}}
\else  % else hide it using \excludecomment from package comment

\excludecomment{comment}
\fi
\newcommand{\tr}{\mathop\mathrm{tr}}  % trace

% Linear spaces

% Semidefinite orders

% \newcommand{\symm}{{\mbox{\bf S}}}  % symmetric matrices

%~~~~~~~~~~~~~~~~~~~~~~~~~~~~~~~~~~~~~~~~~~~~~~~~~~~~~~~~~~~~~~~~~~~~~~~~~~~
% Convex optimization
%~~~~~~~~~~~~~~~~~~~~~~~~~~~~~~~~~~~~~~~~~~~~~~~~~~~~~~~~~~~~~~~~~~~~~~~~~~~

% \newcommand{\Expect}{\mathop{\bf E{}}}
% \newcommand{\Prob}{\mathop{\bf Prob}}
 % convex hull
% \newcommand{\dist}{\mathop{\bf dist{}}}

 % epigraph

%~~~~~~~~~~~~~~~~~~~~~~~~~~~~~~~~~~~~~~~~~~~~~~~~~~~~~~~~~~~~~~~~~~~~~~~~~~~
% Norms and inner products
%~~~~~~~~~~~~~~~~~~~~~~~~~~~~~~~~~~~~~~~~~~~~~~~~~~~~~~~~~~~~~~~~~~~~~~~~~~~
%%% Inner products
  % inner product
  % inner product with expandable brackets
\def\<{\left\langle}  % angle brackets
\def\>{\right\rangle}

%%% Norms
  % absolute value
  % norm with 1 argument
  % l1 norm
  % l2 norm
  % l2 norm sq
  % l-infinity norm
  % l0 norm
% \newcommand{\dnorm}[1]{\norm{#1}_*}  % dual norm
% \newcommand{\lfro}[1]{\left\|{#1}\right\|_{\rm Fr}}  % Frobenius norm
% \newcommand{\matrixnorm}[1]{\left|\!\left|\!\left|{#1}\right|\!\right|\!\right|}  % matrix norm with three bars
% \newcommand{\opnorm}[1]{\matrixnorm{#1}_{\rm op}}  % operator norm with three bars

%%%
%~~~~~~~~~~~~~~~~~~~~~~~~~~~~~~~~~~~~~~~~~~~~~~~~~~~~~~~~~~~~~~~~~~~~~~~~~~~
% Specifics of this paper
%~~~~~~~~~~~~~~~~~~~~~~~~~~~~~~~~~~~~~~~~~~~~~~~~~~~~~~~~~~~~~~~~~~~~~~~~~~~

\usepackage[T1]{fontenc}
%%%Math Fonts

%%% Comments

%%%Shortcuts

\def\a{\alpha}
\def\b{\beta}

\def\gg{\mathfrak{g}}
\def\hh{\mathfrak{h}}
\def\aa{\mathfrak{a}}

\def\mm{\mathfrak{m}}
\def\nn{\mathfrak{n}}

\def\kk{\mathfrak{k}}
\def\ll{\mathfrak{l}}

\def\zz{\mathfrak{z}}

\def\uu{\mathfrak{u}}

\def\gl{\mathfrak{gl}}

\DeclareMathOperator{\Ric}{Ric}
\DeclareMathOperator{\ric}{ric}
\DeclareMathOperator{\Ad}{Ad}
\DeclareMathOperator{\ad}{ad}

\DeclareMathOperator{\Aut}{Aut}
\DeclareMathOperator{\End}{End}
\DeclareMathOperator{\pr}{pr}
\DeclareMathOperator{\SO}{SO}
\DeclareMathOperator{\Or}{O}
\DeclareMathOperator{\SU}{SU}
\DeclareMathOperator{\GL}{GL}

\DeclareMathOperator{\Sym}{Sym}
\DeclareMathOperator{\Lie}{Lie}
\DeclareMathOperator{\Der}{Der}
\DeclareMathOperator{\Id}{Id}
\DeclareMathOperator{\M}{M}
\DeclareMathOperator{\Bk}{B}
\DeclareMathOperator{\Hm}{H}
\DeclareMathOperator{\h}{h}
\DeclareMathOperator{\mo}{m}

\DeclareMathOperator{\Rscal}{R}

\DeclareMathOperator{\J}{J}

\newcommand{\ho}{\mathcal{H}}

%%Notes Shortcut

\usepackage{tikz-cd}
\usepackage{graphicx}
\graphicspath{ {./images/} }

\numberwithin{equation}{section}

\title{\bf Finite extinction time of a family of homogeneous Ricci flows}
\author{Roberto Araujo}
\address{Institute of Mathematics, Polish Academy of Sciences\\
	Jana i Jędrzeja \' Sniadeckich 8\\
	00-656 Warsaw\\
	Poland}
\email{raraujo@impan.pl}
\keywords{homogeneous space, Ricci flow, immortal, noncompact, unimodular}
\subjclass[2020]{53C30, 53E20}
\thanks{This project was partially done while the author was funded by the Deutsche Forschungsgemeinschaft (DFG, German Research Foundation) under Germany’s Excellence Strategy EXC 2044–390685587, Mathematics Münster: Dynamics–Geometry–Structure and the CRC 1442 Geometry: Deformations and Rigidity}

\begin{document}
	
	\begin{abstract}
	We show that for a broad family of noncompact homogeneous Riemannian manifolds, the corresponding homogeneous Ricci flow solutions have finite extinction time, thereby confirming the dynamical Alekseevskii conjecture for these spaces. As an application, we prove that on such homogeneous manifolds $G/H$, the space of all $G$-invariant positive scalar curvature metrics is contractible.
	\end{abstract}

\maketitle
	
%	\begin{center}
%		\tableofcontents
%	\end{center}
	
	\section{Introduction}
	The Ricci flow is the geometric evolution equation on a smooth manifold $M$ given by 
	\begin{equation}\label{eq:general RF}
		\frac{\partial g(t)}{\partial t} = -2\ric(g(t)) , \hspace{0.5cm} g(0)=g_0,
	\end{equation}
	where $\text{ric}(g)$ is the Ricci $(0,2)$-tensor of the Riemannian manifold $(M,g)$.
	
	Hamilton introduced the Ricci flow in \cite{ham} and proved short time existence and uniqueness when $M$ is compact. 
	A maximal Ricci flow solution $g(t)$, $t \in[0,T)$, is called \textit{immortal} if $T=+\infty$; otherwise we say that the flow has \textit{finite extinction time}.
	
	A Riemannian manifold $(M,g)$ is called \textit{homogeneous} if its isometry group acts transitively on it. By the uniqueness of Ricci flow solutions it follows immediately that the isometries are preserved along the flow, thus a solution $g(t)$ from a homogeneous initial metric $g_0$ remains homogeneous for the same isometric action. In this setting, the Ricci flow equation reduces to an autonomous nonlinear ordinary differential equation, which we refer to as the \textit{homogeneous Ricci flow}.
	
	Lafuente has shown in  \cite{laf} that  a homogeneous Ricci flow solution has finite extinction time if and only if the scalar curvature blows up in finite time, or equivalently, if and only if the scalar curvature eventually becomes positive along the flow. Moreover, Bérard-Bergery has shown in \cite{ber} that a manifold admits a homogeneous Riemannian metric of positive scalar curvature if and only if its universal cover is not diffeomorphic to $\mathbb{R}^n$. 
	B\"ohm and Lafuente then questioned in \cite{bl18} whether the converse is also true, namely they asked whether the universal cover of an immortal homogeneous Ricci flow solution is always diffeomorphic to $\mathbb{R}^n$. This got established later as the \textit{dynamical Alekseevskii conjecture} \cite{mfo22}.
	
Böhm had previously shown the conjecture to be true for coverings of compact homogeneous manifolds \cite[Theorem 3.2]{boe}. In \cite{ara1}, it was shown that the dynamical Alekseevskii conjecture is confirmed in the case where the isometry group of the homogeneous Riemannian manifold is, up to a covering, a Lie group product with a compact semisimple factor. Moreover, in \cite{ara2}, it was shown that awesome homogeneous metrics on manifolds with a transitive action by a semisimple Lie group also satisfy the dynamical Alekseevskii conjecture. The family of awesome metrics represents a big source of interesting examples for the conjecture, since in \cite{dl} and \cite{dlm} the authors have shown for all but a finite collection of noncompact simple Lie groups that they admit Ricci negative awesome left-invariant metrics. Hence, for those spaces such that the universal cover is not diffeomorphic to $\mathbb{R}^n$ the confirmation of the conjecture implies a change of regime of the Ricci flow: from one in which the metric expands in all directions to one such that in some direction the metric shrinks and collapses in finite time.

	From that standpoint, a similarly interesting testing ground is provided by the examples of nonsolvable, nonsemisimple Lie groups first introduced by Will in \cite{will17} and further extended by E. Lauret and Will (see \cite{will} and \cite{lw}) which admit Ricci negative left-invariant metrics. These examples belong to a special family of left-invariant metrics that satisfy some compatibility conditions between the Lie algebraic structure and the geometry. The simplest example in this family is $M=G = \left(\SU(2)\times \mathbb{R}\right)\ltimes \mathbb{R}^3$, where $\mathbb{R}^3$ is an irreducible $\SU(2)$-representation, $\mathbb{R}$ acts on  $\mathbb{R}^3$ as a multiple of the identity, and the left-invariant metric satisfies $\left(\SU(2)\times \mathbb{R}\right)\perp \mathbb{R}^3$. E. Lauret and Will have shown in \cite{lw} that any compact semisimple Lie group can arise as the Levi factor of a Lie group admitting a Ricci negative metric.
	
	In this article, we introduce a family of homogeneous Riemannian manifolds that includes, as a very particular case, the construction in \cite[Theorem 5.1]{lw}. We then show that this family of metrics is Ricci flow invariant and satisfies the dynamical Alekseevskii conjecture. Namely, we prove the following theorem.
	\begin{outtheorem}\label{thm:A}
		Let $G= U\ltimes V$ be a Lie group with Lie algebra $\gg = \uu \ltimes_{\theta} V$, where $V$ is an abelian ideal, $\uu$ is a compact Lie algebra acting on $V$, and the representation $\theta \colon \uu \to \gl(V)$ is via semisimple operators. Let $H$ be a compact subgroup of $U$ and let us consider the family of $G$-invariant metrics on $M=G/H$ such that 
		\begin{enumerate}
			\item $U/H \perp V$; \label{item:standard}
			\item There is an orthogonal weight space decomposition of $V$ into $\uu$-submodules, 
			\[V=\bigoplus_{\a \in \mathcal{I}^*} V^\a,\]
			where $\mathcal{I}^* \subset \uu^*$ and such that for all $Z\in\uu$, $\theta(Z)-\a(Z)\Id$ restricted to $V^\a$ is a semisimple operator with purely imaginary eigenvalues. \label{item:awesome}
		\end{enumerate}		
		Then this family is Ricci flow invariant. Moreover, if the universal cover of $M$ is not diffeomorphic to $\mathbb{R}^n$, then for any initial metric on this family the Ricci flow has finite extinction time.
	\end{outtheorem}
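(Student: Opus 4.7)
The plan is to address the two assertions separately. For Ricci-flow invariance, the strategy is to show directly that the Ricci operator of any metric $g$ in the family respects both structural conditions. Starting from the reductive decomposition $\gg = \hh \oplus \mm \oplus V$, where $\mm$ is a reductive complement of $\hh$ in $\uu$, and applying the standard Ricci formula for a reductive homogeneous space, I would verify two block-diagonality claims. First, the $\mm$--$V$ cross block of $\ric(g)$ vanishes: since $V$ is an abelian ideal, the mixed brackets take the form $[\mm, V] \subset V$ and $[V,V]=0$, which, together with the $H$-invariance of the splitting $\uu \oplus V$, forces all Ricci mixed terms to cancel. Second, $\ric(g)|_V$ preserves the weight decomposition: because $\ric(g)$ is $\Ad(U)$-equivariant and each $V^\a$ is an isotypic component for the $\uu$-action — with condition (2) forcing the non-scalar part of $\theta(Z)$ to be skew on $V^\a$ — the restriction $\ric(g)|_V$ must split as a direct sum of scalars along the $V^\a$. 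Together these give that $-2\,\ric(g)$ preserves (1) and (2), so the family is Ricci-flow invariant.

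For the finite-extinction statement, I would invoke Lafuente's criterion from \cite{laf}: it suffices to show that the scalar curvature $R(g(t))$ becomes positive at some finite time. Along the flow, parameterize the metric by a pair $(g_U(t), \{y_\a(t)\})$, where $g_U(t)$ is a $U$-invariant metric on $U/H$ and $y_\a(t) > 0$ are scalar conformal factors on the weight spaces $V^\a$ (after fixing background inner products), and derive the coupled ODE system from $\dot g = -2\,\ric(g)$. Decompose the scalar curvature as
\[
R(g) \;=\; R_{U/H}(g_U) \;+\; R_V(g_U, \{y_\a\}),
\]
where $R_V$ collects the semidirect-product contributions, all of Milnor type and nonpositive under assumption (2). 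Since $M$ fibers over the compact base $U/H$ with contractible $V$-fibers, the universal cover of $M$ is diffeomorphic to $\R^n$ if and only if the universal cover of $U/H$ is; hence under our hypothesis the latter fails, and by Böhm's theorem \cite[Theorem 3.2]{boe} the Ricci flow on the compact homogeneous space $U/H$ alone has finite extinction. The plan is to transfer this behavior to the flow on $M$: show that the ODEs for $g_U$ and for the $y_\a$'s decouple sufficiently for $g_U(t)$ to still develop a positive scalar curvature contribution whose blow-up outpaces any damping from $R_V$.

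The main obstacle will be controlling the coupled dynamics between the compact $U/H$ direction and the non-compact $V$ direction. A priori the $y_\a(t)$ could evolve so as to keep the total $R$ negative even when $R_{U/H}$ turns positive. The decisive step will therefore be a monotonicity or Lyapunov argument — most naturally phrased in the bracket-flow framework used in \cite{ara1, ara2} — that bounds the magnitude of $R_V$ in terms of the $U/H$-geometry and shows that $R(g(t))$ must cross zero in finite time. Making this coupling estimate precise, and using assumption (2) to rule out pathological weight-space behavior, is the heart of the argument.
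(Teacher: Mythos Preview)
Your invariance argument has the right shape but contains two errors. First, $\ric(g)$ is only $\Ad(H)$-equivariant, not $\Ad(U)$-equivariant, since the metric itself is not $U$-bi-invariant; so you cannot invoke Schur-type reasoning over $\uu$-isotypic components. Second, and more importantly, $\ric(g)|_{V^\a}$ is \emph{not} a scalar. Condition~(2) only asks that the $V^\a$ be mutually orthogonal; within each $V^\a$ the metric is an arbitrary $\Ad(H)$-invariant inner product with eigenvalues $g_1^\a\le\cdots\le g_{d_\a}^\a$ relative to a background. What one actually needs --- and what a direct computation with the homogeneous Ricci formula gives --- is only the block-diagonality $\ric_g(V^\a,V^\b)=0$ for $\a\ne\b$ and $\ric_g(\mm_\uu,V)=0$.

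This misreading propagates fatally into your finite-extinction plan. The parametrisation $(g_U(t),\{y_\a(t)\})$ with scalar $y_\a$ does not exist: the whole difficulty is that the eigenvalues $g_i^\a(t)$ on each $V^\a$ evolve differently, and their \emph{pinching} produces the error term coupling $V$ to the compact directions. Concretely, for $L\in\ll\subset\kk$ one has
\[
\ric^\star(L,L)=\ric_{U/H}(L,L)-\tfrac14\sum_{\a,i,j}\langle[L,\bar A_i^\a],\bar A_j^\a\rangle^2\Bigl(\tfrac{g_i^\a}{g_j^\a}+\tfrac{g_j^\a}{g_i^\a}-2\Bigr),
\]
and the bracketed quantity vanishes precisely when the metric on $V^\a$ is a scalar multiple of the background. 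So the ``decoupling'' you hope for is exactly what fails.

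Two key ideas are missing. First, one must pass to the \emph{unimodular} Ricci flow $\dot g=-2\ric^\star_g$ (equivalent to the Ricci flow up to equivariant diffeomorphisms): the mean-curvature term $S^g(\ad\Hm_g)$ in the ordinary Ricci tensor is precisely what allows Ricci-negative metrics in this family and obstructs the monotonicity you need. Second, rather than showing $R(g(t))>0$ via a scalar-curvature splitting, the argument tracks the largest eigenvalue $g^\ll_{m'}(t)$ of $g(t)|_{\ll_{ss}\times\ll_{ss}}$. One shows (i) the pinching error above is bounded by an \emph{integrable} function of $t$ --- this comes from the identity $\sum_{i\ge i_0}\ric^\star(\bar A_i^\a,\bar A_i^\a)\ge 0$ and monotonicity of $\sum_{i\ge i_0} g_i^\a(t)$; hence (ii) $g^\ll_{m'}(t)$ stays bounded; and then (iii) integrating $\tfrac{d^-}{dt}g^\ll_{m'}\le -\tfrac{b_0}{2}+C_0 F(t)$ with $F\in L^1$ forces $g^\ll_{m'}(t)\to 0$ in finite time. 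B\"ohm's compact result enters only through the pointwise lower bound $\ric_{U/H}(\bar L_{m'},\bar L_{m'})\ge b_0/4>0$, not as a black box on the $U/H$-flow.
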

	
	Since we can assume without loss of generality that the presentation $G/H$ is almost-effective, the condition $H \subset U$ is not actually restrictive. 
	
	Note that when $\theta\colon \uu \to \gl(V)$ is injective, $V$ is the abelian nilradical of $\gg$ and condition \eqref{item:standard} is the same as $M$ being $G$-standard in the sense of \cite[Definition 2.1]{bl22}. Moreover, the weight space decomposition $V=\bigoplus_{\a \in \mathcal{I}^*} V^\a$ above always exists as a consequence of the fact that $\uu$ is compact and $\theta(\uu)$ are semisimple operators, since in this case there is an $\Ad(H)$-invariant metric on $V$ such that $\theta(\uu)$ are normal operators. Condition \eqref{item:awesome} is then a compatibility condition between the metric restricted to $V$ and the representation $\theta$.
	We call the family of metrics on $M$ for which there exists a decomposition of $\gg$ satisfying the above hypotheses \textit{$\theta$-adapted standard} $U\ltimes_\theta V$-invariant metrics. 
	
	It is worth remarking that such representations $\theta \in \End(\uu,\gl(V))$ are precisely the ones contained in closed orbits for the conjugation action of $\GL(V)$ on $\End(\uu,\gl(V))$, and as such we call them \textit{stable} representations.
	
	In order to prove Theorem \ref{thm:A}, it seems natural to first get curvature estimates that allow us to understand the behavior of the $\theta$-adapted standard Ricci flow solution $g(t)$ restricted to $V$. However, $G$ may not be unimodular and that yields bad terms in the Ricci tensor with respect to getting these initial curvature estimates, which is indeed related to the existence of Ricci negative metrics on this family. 
	We overcome this difficulty by using the equivalence between the homogeneous Ricci flow and the \textit{unimodular Ricci flow} \cite[Corollary 3.3]{bl18}.
	Indeed, we can straightforwardly get initial unimodular Ricci curvature bounds (see Corollary \ref{cor:Ricci-A and Ricci A-bar}) to be then leveraged throughout the dynamics of the unimodular Ricci flow solution $g^\star(t)$ in order to confirm the conjecture for such spaces.
	More explicitly, using the unimodular Ricci flow we get initial control over the pinching of $\left.g^\star(t)\right\vert_{V\times V}$, which leads to integral curvature bounds for the unimodular Ricci curvature in the direction of $V$. This, in turn, can be used to show that
	eventually a uniform Ricci positive direction emerges along the flow, arising from the intrinsic geometry of $U/H$ in the case where the universal cover of $U/H$ (equivalently of $M$) is not diffeomorphic to $\mathbb{R}^n$.
	This finally yields the finite extinction time of the Ricci flow solution $g(t)$ starting from a $\theta$-adapted standard metric.
	
		An interesting consequence of the truthfulness of the dynamical Alekseevskii conjecture is the contractibility of the space of $G$-invariant positive scalar curvature metrics on a given homogeneous space $G/H$. We finish the paper showing that this is indeed the case for $\Lie(G)= \uu \ltimes_{\theta} V$ as above. 
	\begin{outtheorem}\label{thm:B}
			Let $G=U\ltimes_\theta V$ be a Lie group as in Theorem \ref{thm:A} and let $H$ be a compact subgroup of $U$. Then the space of $G$-invariant positive scalar curvature metrics on $G/H$ is contractible.
	\end{outtheorem}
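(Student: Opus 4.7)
The plan is to use Theorem \ref{thm:A}, together with the Ricci flow as a deformation, to produce an explicit contraction of the space $\mathcal{M}^+$ of $G$-invariant positive scalar curvature metrics on $M = G/H$. First I would observe that $\mathcal{M}^+$ is an open subset of the space $\mathcal{M}$ of all $G$-invariant metrics on $M$, which is parameterized by the open convex cone of $\Ad(H)$-invariant positive-definite symmetric bilinear forms on $\gg/\hh$ and is therefore already contractible. If $\mathcal{M}^+$ is empty---equivalently, by B\'erard-Bergery, if the universal cover of $M$ is diffeomorphic to $\RR^n$---there is nothing to prove. Otherwise, by Lafuente's criterion combined with Theorem \ref{thm:A}, the homogeneous Ricci flow starting at any $g_0 \in \mathcal{M}^+$ stays in $\mathcal{M}^+$ (PSC is preserved) and has finite extinction time $T(g_0) < \infty$; both the flow $g_{g_0}(t)$ and the extinction time $T$ depend continuously on $g_0$ by standard ODE theory on the finite-dimensional moduli space.

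With this in hand, I would construct a candidate homotopy $H : \mathcal{M}^+ \times [0,1) \to \mathcal{M}^+$ by reparameterizing the flow as $H(g_0, s) = g_{g_0}(s\, T(g_0))$, which satisfies $H(\cdot, 0) = \Id$ and is jointly continuous. To turn this into a deformation retraction onto a point, I would rescale at each instant to unit scalar curvature, setting $\hat H(g_0, s) = \Rscal(H(g_0,s))\, H(g_0,s)$, so that $\hat H$ takes values in the slice of unit-scalar-curvature metrics in $\mathcal{M}^+$, which is precompact in $\mathcal{M}$. The remaining step is to extend $\hat H$ continuously to $s = 1$ and identify the image of $\hat H(\cdot, 1)$ with a contractible subset of $\mathcal{M}^+$; a second straightforward homotopy inside that contractible set then collapses everything to a point.

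The main obstacle is precisely the extension of $\hat H$ to $s = 1$: proving that the rescaled flow has a limit as $t \to T(g_0)^-$, that it depends continuously on $g_0$, and that it lies in a contractible subspace of $\mathcal{M}^+$. I would approach this via the asymptotic picture built in the proof of Theorem \ref{thm:A}, where extinction is driven by a uniformly Ricci-positive direction emerging from the intrinsic geometry of $U/H$, while the $V$-components of the metric collapse; after normalization, this strongly suggests that the limit is a canonical metric supported on the $U/H$ factor, lying on a single scalar ray. Making this rigorous requires a blow-up analysis at the finite extinction singularity, leveraging the integral curvature estimates on the unimodular Ricci flow developed for Theorem \ref{thm:A}, and constitutes the principal technical difficulty of the proof.
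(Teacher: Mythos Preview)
Your proposal has a genuine gap at the very first step. Theorem \ref{thm:A} does \emph{not} assert finite extinction time for every $G$-invariant metric on $G/H$; it asserts it only for initial metrics in the special family $\mathfrak{S}_{U\ltimes V}$ of $\theta$-adapted standard metrics (those satisfying $\mm_\uu \perp V^{\a_1}\perp\cdots\perp V^{\a_p}$). For a general $g_0 \in \mathcal{M}^+$ the orthogonality conditions fail, Theorem \ref{thm:A} says nothing, and your continuous map $g_0 \mapsto T(g_0)$ is not known to be finite. Thus the homotopy $H(g_0,s)=g_{g_0}(sT(g_0))$ is not even defined on all of $\mathcal{M}^+$.

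The paper's proof is organized precisely around this missing piece. It first constructs, by hand, a scalar-curvature-nondecreasing deformation retraction from arbitrary $G$-invariant metrics onto the $\theta$-adapted standard ones: one homotopy straightens the horizontal distribution of the Riemannian submersion $G/H\to U/H$ (killing the O'Neill term), and a second homotopy runs the $\GL(V)$ moment-map flow on the representation $\theta$ toward a minimal $\theta_{\min}$. Only after landing in $\mathfrak{S}_{U\ltimes V}$ does Theorem \ref{thm:A} apply. Moreover, the paper never needs your blow-up analysis at the singular time: instead of extending a rescaled flow to $s=1$, it uses a soft topological argument (Proposition \ref{prop:contractability of global attract}) showing that once the positive-scalar-curvature set is a global attractor for the (modified) flow, all homotopy groups vanish. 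Your proposed endpoint analysis---identifying a continuous limit of the rescaled flow in a contractible subspace---would be substantially harder than anything actually required.
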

	The strategy to prove Theorem \ref{thm:B} is to find that there is a scalar curvature nondecreasing path connecting an arbitrary $U\ltimes_\theta V$-invariant metric and the set of $\theta$-adapted standard ones. And from the $\theta$-adapted one we can complete the argument using the Ricci flow and Theorem \ref{thm:A}.  \\
	
		The structure of this article is the following.
		In Section \ref{section:Prelim}, we establish some of the preliminaries on homogeneous Riemannian manifolds and the (unimodular) homogeneous Ricci flow we will need in the following sections.
		In Section \ref{section:main intro section}, we introduce the family of homogeneous Riemannian manifolds we are interested in, and we show the unimodular Ricci flow invariance of the set of $\theta$-adapted standard metrics. In Section \ref{section:algebraic bounds}, we then proceed to establish a priori algebraic bounds that exploit the compatibility of the Lie brackets and the metric in order to derive monotone quantities for our dynamics. In Section \ref{section:main result}, we proceed to the long-time behavior analysis  to prove Theorem \ref{thm:A}. Finally, in Section \ref{section:connectedness} we prove Theorem \ref{thm:B} and conclude by showing the Ricci flow invariance of a slight generalization of the family of $\theta$-adapted standard metrics which account for a nonabelian nilradical. \\
	
	\textit{Acknowledgments.} It is a pleasure to thank my PhD advisor, Christoph B\"ohm, for his support and for generously sharing his knowledge with me; in particular, for sharing with me the argument about how the dynamical Alekseevskii conjecture implies the contractibility of the set of $G$-invariant positive scalar curvature metrics, which we present in Section \ref{section:connectedness}. I would also like to thank Ramiro Lafuente for the useful comments on a first version of this article. And finally, thank Jorge Lauret for the insightful conversations and, in particular, for suggesting that I attempt to extend the family of homogeneous spaces for which our techniques apply, culminating in Theorem \ref{thm:A} as presented here.
	
	\section{Preliminaries on homogeneous Ricci flows} \label{section:Prelim}
	
	A Riemannian manifold $(M, g)$ is said to be homogeneous if its isometry group
	$I(M, g)$ acts transitively on $M$. If $M$ is connected (which we assume from here onward unless otherwise stated), then each transitive, closed Lie subgroup $G <I(M, g)$ gives
	rise to a presentation of $(M, g)$ as a homogeneous space with a $G$-invariant metric
	$(G/H, g)$, where $H$ is the isotropy subgroup of $G$ fixing some point $p \in M$. We call this space a \textit{homogeneous Riemannian manifold}. 
	
	The $G$-action induces a Lie algebra homomorphism $\gg \to \mathfrak{X}(M)$ assigning to each $X \in \gg$ a Killing field on $(M, g)$, also denoted by $X$, and given by
	\begin{equation}\label{eq:infinitesimal correspondence}
		X(q) \coloneqq \left. \frac{d}{dt}\right|_{t=0} \exp (tX)\cdot q, \hspace{0.5cm} q \in M.
	\end{equation}
	
	If $\hh$ is the Lie algebra of the isotropy subgroup $H < G$ fixing $p \in M$, then it can be characterized as those $X \in \gg$ such that $X(p)=0$. Given that, we can take a complementary Ad($H$)-module $\mm$ to $\hh$ in $\gg$ and identify $\mm \cong T_pM$ via the infinitesimal correspondence \eqref{eq:infinitesimal correspondence}.
	
	In general, a homogeneous space $G/H$ is called \textit{reductive} if there exists a complementary vector space $\mm$ such that for the Lie algebras of $G$ and $H$, respectively $\gg$ and $\hh$, we have
	\begin{align*}
		\gg = \hh \oplus \mm \nonumber, \hspace{0.5cm} \Ad(H)(\mm) \subset \mm.
	\end{align*}
	
	This is always possible in the case of homogeneous Riemannian manifolds since, by a classic result in Riemannian geometry \cite[Chapter VIII, Lemma 4.2]{doC}, an isometry is uniquely determined by the image of a point $p$ and its derivative at $p$. Hence, the isotropy subgroup $H$ is a closed subgroup of $\SO(T_pM)$ and thus compact. Since $H$ is compact, one can average over an arbitrary inner product on $\gg$ to make it Ad($H$)-invariant and hence take $\mm \coloneqq \hh^{\perp}$. With this choice, one can identify $\mm \cong T_{eH}G/H$, and under this identification there is a one-to-one correspondence between $G$-invariant metrics in $M \coloneqq G/H$, with $p \cong eH$, and Ad($H$)-invariant inner products on $\mm$.
	
	Once we assume that $G < I(M, g)$, again by \cite[Chapter VIII, Lemma 4.2]{doC}, we have that $G/H$ is an \textit{effective presentation}, which means that the ineffective kernel given by $N\coloneqq\{h \in H \ | \ h\cdot q=q, \hspace{0.2cm}\forall q \in M\}=\bigcap_{g \in G} gHg^{-1}$ is trivial. In other words, this means that $G$ and $H$ have no nontrivial common normal subgroup.
	
	On the other hand, by the correspondence above, given a reductive decomposition $\gg=\hh\oplus\mm$ such that $\gg$ and $\hh$ have no nontrivial common ideal, and an inner product $\langle \cdot, \cdot \rangle$ on $\mm$ such that $\Ad(H)$ is a closed subset of $\SO(\mm)$, we can reconstruct the homogeneous Riemannian manifold $(M,g)$ with the \textit{almost-effective presentation} $M=\tilde{G}/\tilde{H}$. Namely, the ineffective kernel $\bigcap_{g \in \tilde{G}} g\tilde{H}g^{-1}$ is discrete, where $\tilde{G}$ and  $\tilde{H}<\tilde{G}$ are integral groups for the Lie algebras $\gg$ and $\hh$, respectively. In this manner, we can restrict the problem to the Lie algebra level. Observe, however, that in this case $\tilde{H}$ is not necessarily compact; for example, it could be the universal cover of a torus. It is clear that we can always assume that a presentation of a homogeneous manifold $M$ is effective or almost-effective, since we can quotient the transitive group $G$ by its ineffective kernel $N$, obtaining an effective presentation given by the still transitive action of $G/N$ on $M$, and analogously for the almost-effectiveness at the Lie algebra level.
	\begin{definition}[$G$-homogeneous manifold]
		We say that a pointed homogeneous manifold $(M,p)$ is a $G$\textit{-homogeneous manifold} if $G$ acts almost-effectively on $M$ and $M$ admits a $G$-invariant Riemannian metric. In this case, we denote $H$ the isotropy group of the $G$ action at $p$ and write also $M= G/H$.
	\end{definition}
\begin{remark*}[Reduction to Lie algebra level] \label{rmk:reduction to lie algebra}
		 By the discussion above, we identify from here onward $G/H$ by a pair consisting of a Lie algebra $\gg$ and a Lie subalgebra $\hh \subset \gg$, such that $\gg$ and $\hh$ have no common ideal, and such that $\Ad(H) \subset \Aut(\gg)$ is compact. Moreover, any $G$-invariant metric on $G/H$ corresponds to an $\Ad(H)$-invariant inner product on a reductive complement $\mm$.
\end{remark*}
	
	We proceed to discuss the Ricci flow on homogeneous manifolds.
	
	Let $(M,g)$ be a homogeneous Riemannian  manifold with an almost-effective presentation $M=G/H$ and reductive decomposition $\gg = \hh \oplus \mm$ on a base point $p$, with the identification $\mm \cong T_pM$. The formula for the Ricci tensor of $(M,g)$ at $X\in\mm$, \cite[Corollary 7.38]{be}, is given by 
	\begin{align}\label{eq:ricci formula}
		\ric_g(X,X)=& - \frac{1}{2}\Bk(X,X) -\frac{1}{2}\sum_i \Vert[X,X_i]_{\mm}\Vert_g^2+ \frac{1}{4}\sum_{i,j}g([X_i,X_j]_{\mm},X)^2 \\ \notag 
		&- g([\Hm_g,X]_{\mm},X).
	\end{align}
	Here $[\cdot,\cdot]_\mm$ is the projection of the Lie brackets according to the reductive decomposition $\hh\oplus\mm$, $\Bk$ is the Killing form of $\gg$, $\{X_i\}_{i=1}^n$ is an orthonormal basis of $\mm$, and $\Hm_g$ is the mean curvature vector defined by $g(\Hm_g,X) \coloneqq \tr(\ad X)$. 
	
	In full generality, the Ricci flow \eqref{eq:general RF} is a nonlinear partial differential equation. In the case where $M$ is compact, Hamilton proved in \cite{ham} the short time existence and uniqueness of it. Later, Shi showed in \cite{shi} that if $(M,g_0)$ is complete, noncompact, with bounded curvature, then the Ricci flow has a solution with bounded curvature on a short time interval, and Chen and Zhu proved in \cite{chenzu} the uniqueness of the flow within this class of complete and bounded curvature Riemannian metrics.
	
	Since every homogeneous Riemannian manifold is complete and has bounded curvature, there exists a unique Ricci flow solution $g(t)$ that is complete with bounded curvature and has an initial homogeneous metric $g_0$. By uniqueness and the diffeomorphism equivariance of the Ricci tensor, it follows that the Ricci flow preserves isometries \cite[Corollary 1.2]{chenzu}. Consequently, this complete bounded curvature Ricci flow solution $g(t)$, with initial $G$-invariant metric $g_0$, remains $G$-invariant. In this setting, we call the Ricci flow \textit{homogeneous}, and the Ricci flow equation reduces to the following autonomous nonlinear ordinary differential equation on the space of $\Ad(H)$-invariant inner products on $\mm$:
	\begin{align}\label{eq:ricci flow}
		\frac{dg(t)}{dt} &= -2\ric(g(t)), \hspace{0.5cm} g(0)=g_0.
	\end{align}
	The Ricci $(0,2)$-tensor in this case can be seen as the following smooth map 
	\[\ric: (\Sym^2(\mm))^H_+ \to (\Sym^2(\mm))^H,\]
	where $(\Sym^2(\mm))^H$ is the nontrivial vector space of $\Ad(H)$-invariant symmetric bilinear forms in $\mm$ and $(\Sym^2(\mm))^H_+$ is the open set of positive definite ones.
	
	Note that by classical ODE theory, given an initial $G$-invariant metric $g_0$ corresponding to an initial $\Ad(H)$-invariant inner product, there is a unique $\Ad(H)$-invariant inner product solution corresponding to a unique family of $G$-invariant metrics $g(t)$ in $M$. And indeed, one can use that to define the homogeneous Ricci flow on the locally homogeneous incomplete case (see \cite{bl18}). 
	
	\subsection{The unimodular Ricci flow}
	
	We can rewrite formula \eqref{eq:ricci formula} for the $(0,2)$-type Ricci tensor of $g$ in its $(1,1)$-type tensor as follows
		\begin{equation}\label{eq:Ricci (1,1) tensor}
		\Ric_g = -\frac{1}{2}\Bk_g+\M_g -S^g(\ad{\Hm_g}),
	\end{equation}
where $\Bk_g$ corresponds to the Killing form, $\M_g $ is the symmetric operator defined as 
\[g\left(\M_gX,X\right) \coloneqq -\frac{1}{2}\sum_{i,j}g(\left[X,X_i\right]_\mm,X_j)^2+\frac{1}{4}\sum_{i,j}g(\left[X_i,X_j\right]_\mm,X)^2,\]
where $\{X_1,\ldots, X_n\}$ is any $g$-orthonormal basis of $\mm$.
And finally, 
\[S^g (\ad{\Hm_g}) \coloneqq \frac{1}{2}\left(\ad\Hm_g +\left(\ad\Hm_g\right)^{t_g}\right)\]
is the symmetrization of $\ad\Hm_g \coloneqq [\Hm_g,\cdot]$, where we denote $(\cdot)^{t_g}$ as the $g$-transpose of a linear map on $\mm$.

\begin{remark*}\label{rem:derivations do nothing}
	Note that the last term involving the mean curvature vector is nothing more than the symmetrization of a derivation of $\gg$ that preserves $\hh$ (see \cite[Corollary 2.7]{bl18}). This means that its infinitesimal action on $g$ can be integrated to a pullback by a one-parameter family of $G$-equivariant diffeomorphisms on $G/H$ (see \cite{jab13}). Therefore, this term does not contribute to any genuine geometric change along the Ricci flow.
\end{remark*}

Let us consider the unimodular part of the Ricci curvature, defined as
\begin{equation}
	\Ric^\star_g \coloneqq \Ric_g + S^g\left(\ad \Hm_g\right)=  -\frac{1}{2}\Bk_g+\M_g,
\end{equation}
which was introduced by Heber in \cite{heb}, and its corresponding $(0,2)$-type tensor 
\begin{equation}
	\ric^\star_g \left(\cdot,\cdot\right) \coloneqq g\left(\Ric_g^\star \cdot,\cdot\right).
\end{equation}

Observe that if the transitive Lie group $G$ is unimodular, then $\Ric^\star = \Ric$. 
\begin{remark*}
	The unimodular Ricci curvature has also been called the \textit{modified} Ricci curvature in \cite{bl18}.
\end{remark*}

With this in hand we have the following natural definition of a homogeneous geometric flow. 
\begin{definition}[Unimodular Ricci flow]
	The following ordinary differential equation
	\begin{equation}\label{eq:unimodular RF}
		\frac{dg}{dt}=-2\ric^\star_g, \hspace{0.5cm} g(0)=g_0
	\end{equation}
	is called the \textit{unimodular Ricci flow}.
\end{definition}

Remark \ref{rem:derivations do nothing} can be seen using the correspondence between the Ricci flow and the \textit{bracket flow} (see \cite{lau13} and \cite{bl18}). Indeed, by \cite[Proposition 2.3]{bl18}, there is a one-to-one correspondence between the space of $G$-invariant metrics on $G/H$ modulo the action of the group of $\Ad(H)$-equivariant automorphisms of $\gg$, $\Aut^H_\mm(\gg)$, and the space of Lie brackets modulo the action of the orthogonal group $\Or(\mm)$.

Now using that this equivalence has a dynamical version, namely, the homogeneous Ricci flow is equivalent to the bracket flow \cite[Theorem 2.5]{bl18}. The change of gauge from the noncompact group $\Aut^H_\mm(\gg)$ for the compact one $\Or(\mm)$, implies that the geometries of the solutions to the ordinary and unimodular Ricci flows with same initial condition are uniformly close to one another \cite[Proposition 3.1]{bl18}, thus yielding the following.
\begin{corollary}\label{cor:uni RF is equivalent to RF}
	Let $M=G/H$ be a $G$-homogeneous manifold with reductive decomposition $\gg=\hh\oplus\mm$. Then the unimodular Ricci flow of $G$-invariant metrics on $M$ is equivalent to the Ricci flow with same initial condition.
\end{corollary}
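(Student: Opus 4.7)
The plan is to deduce the corollary from the bracket flow correspondence of Lauret and B\"ohm--Lafuente, together with the observation in Remark \ref{rem:derivations do nothing} that the term distinguishing $\ric_g$ from $\ric^\star_g$ is the symmetrization of a derivation of $\gg$ preserving $\hh$.

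First, I would set up the bracket flow picture. Fix the reductive complement $\mm$ and an initial $\Ad(H)$-invariant inner product $\langle\cdot,\cdot\rangle_0$, and encode the Lie bracket of $\gg$ as an element $\mu_0\in\Lambda^2\mm^*\otimes\mm$ together with the $\hh$-action. By \cite[Proposition 2.3]{bl18}, there is a one-to-one correspondence between $G$-invariant metrics on $G/H$ modulo $\Aut^H_\mm(\gg)$ and Lie brackets modulo $\Or(\mm)$, and by \cite[Theorem 2.5]{bl18} the Ricci flow corresponds to the bracket flow
\[
\frac{d\mu}{dt} = -\pi(\Ric_\mu)\mu,
\]
where $\pi$ denotes the canonical $\gl(\mm)$-representation on brackets.

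Next, I would analyze what happens when $\Ric_\mu$ is replaced by $\Ric^\star_\mu$. The two differ by $S^g(\ad\Hm_g)$; since $\ad\Hm_g$ is a derivation of $\gg$ preserving $\hh$, the operator $\pi(\ad\Hm_g)$ annihilates $\mu$, so only the skew-symmetric component of the difference contributes to the bracket flow, and that component lies in $\mathfrak{o}(\mm)\subset\Aut^H_\mm(\gg)$. Consequently the bracket flow trajectories generated by $\Ric_\mu$ and $\Ric^\star_\mu$ agree up to the action of an $\Or(\mm)$-valued curve, so they produce the same $G$-invariant geometries at each time.

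To translate this back to metrics on $M$, I would integrate the time-dependent derivation $\ad\Hm_{g(t)}$ to a one-parameter family of automorphisms $\phi_t\in\Aut(\gg)$ preserving $\hh$, which descends to $G$-equivariant diffeomorphisms of $G/H$, and verify directly that $\phi_t^* g(t)$ satisfies equation \eqref{eq:unimodular RF} with initial condition $g_0$; uniqueness of the homogeneous Ricci flow then forces $\phi_t^* g(t) = g^\star(t)$. The main technical hurdle is promoting the infinitesimal equivalence to global existence of $\phi_t$ on the full interval of existence and tracking the gauge change between the noncompact group $\Aut^H_\mm(\gg)$ and the compact $\Or(\mm)$; this is exactly the content of \cite[Proposition 3.1]{bl18}, after which the corollary follows.
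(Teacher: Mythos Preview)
Your proposal is correct and follows essentially the same route as the paper: both arguments invoke the bracket flow correspondence \cite[Proposition 2.3, Theorem 2.5]{bl18}, use that $\ad\Hm_g$ is a derivation of $\gg$ preserving $\hh$ (Remark \ref{rem:derivations do nothing}) so that the discrepancy between $\Ric$ and $\Ric^\star$ acts on brackets only through an $\Or(\mm)$-gauge, and then defer the global existence and uniform comparison to \cite[Proposition 3.1]{bl18}. Your write-up is in fact more explicit than the paper's, which simply strings these citations together in the paragraph preceding the corollary.
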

	
	\section{The unimodular Ricci flow along $\theta$-adapted metrics} \label{section:main intro section}

	In this section, we investigate the long-time behavior of a Ricci flow solution along a special family of homogeneous Riemannian metrics on homogeneous spaces that stem from semisimple representations of compact lie algebras. To this end, we use the equivalence established in the previous section between the Ricci flow and the unimodular Ricci flow, and we study the long-time behavior of the latter to draw conclusions about the former. As we will see, in our setting it is more convenient to obtain dynamical estimates from a solution $g(t)$ of the unimodular Ricci flow, which then allow us to describe the qualitative behavior of the Ricci flow with the same initial condition.
	
	From now on, we consider the case where $M$ is a $G$-homogeneous manifold for a Lie group $G$ with Lie algebra given by the semidirect product $\gg= \uu \ltimes_\theta V$, where $\theta \colon \uu \to \gl\left(V\right)$ is the Lie algebra representation defining it. 
	\begin{definition}[Stable representation]\label{def:stable rep}
		Let $\uu$ be a compact Lie algebra and $V$ a finite dimensional vector space. We say that a representation $\theta \colon \uu \to \gl\left(V\right)$ is \textit{stable} if $\theta(\uu)$ consists of semisimple operators.
	\end{definition}

	Recall that a linear operator $T \colon V \to V$ is semisimple if every $T$-invariant subspace has a complementary $T$-invariant subspace. This is equivalent to the existence of an inner product $\langle \cdot, \cdot \rangle$ on $V$ such that $T$ is normal, i.e., $T$ commutes with its transpose. 
\begin{remark*}\label{remark:root decomp}
	Since we assumed $\uu$ to be compact, there exists an inner product $\langle \cdot, \cdot \rangle$ with respect to which $\theta(\uu)$ consists of normal operators. We have thus a weight space decomposition of $V=\bigoplus_{\a} V^{\a}$ into $\uu$-submodules $V^\a$, for linear functionals $\a \in \uu^*$, such that for all $X \in \uu$, 
	\[\left.\ad X\right|_{V^\a} = \a(X)\Id+\J^\a_X, \]
	where $\J^\a_X$ is a semisimple operator with purely imaginary eigenvalues.
\end{remark*}
\begin{remark*} \label{rem:stable = there is minimum}
	The name in Definition \ref{def:stable rep} comes from the following fact. The Lie algebra $\uu$ acts  on $V$ via normal operators with respect to a given inner product $\langle \cdot,\cdot\rangle$ if and only if the associated representation $\theta \colon \uu \to \gl(V)$ is a minimal element of the orbit $\GL(V)\cdot \theta$, where $\GL(V)$ acts on $\gl(V)$ via conjugation. This minimality is taken with respect to a canonical metric on $\End\left(\uu,\gl(V)\right)$, induced by the trace metric determined by $\langle \cdot,\cdot\rangle$ on $\gl(V)$ and a background metric on $\uu$ (see \cite{jp}). By geometric invariant theory, this is equivalent to the orbit $\GL(V)\cdot \theta$ being closed (see \cite[Theorem 4.3]{rs} and also \cite[Theorem 1.1.1]{bl20}). 
\end{remark*}

Let us assume then that $G=U\ltimes_\theta V$, where $U$ is an integral subgroup of the compact Lie algebra $\uu$, where we are identifying $V\cong\exp(V)$, and where we abuse notation to also denote by $\theta$ the group-level representation $\theta \colon U \to \GL\left(V\right)$.

\begin{definition}[Stable $U\ltimes_\theta V$-homogeneous manifold]\label{def:GIT-stable}
	We say a $U\ltimes_\theta V$-homogeneous manifold is \textit{stable} if $\theta$ 
	is stable.
\end{definition}

Observe that we have a decomposition $\uu = \kk \oplus \hat{\zz}$, where $\kk$ is compactly embedded in $\uu\ltimes_\theta V$ containing $\uu_{ss}\coloneqq\left[\uu,\uu\right]$, and $\hat{\zz}$ is the subalgebra of the center $\zz(\uu)$ of $\uu$ consisting of elements $Y \in \hat{\zz} \subset \zz(\uu)$ such that $\a(Y)\neq0$ for some $\a \in \uu^*$. It is then clear that $\theta$ is stable if and only if $\theta$ restricted to $\hat{\zz}$ is. 

Let then $M=G/H$ be a stable $U\ltimes_\theta V$-homogeneous manifold and let us also consider the Lie groups $K$ and $\hat{Z}$ with respective Lie algebras $\kk$ and $\hat{\zz}$. By almost-effectiveness, we can assume without loss of generality that $H\subset K$ and that $V$ is the abelian nilradical of $\gg=\uu\ltimes_\theta V$. 
	\begin{remark*}\label{rem:homogeneous bundle}
			Observe that $M$ is, up to coverings, a $U$-homogeneous principal $\mathbb{R}^{\dim V}$-bundle over the homogeneous space $U/H = K/H\times \hat{Z}$. Also, on another perspective, $M$ is a $K$-homogeneous principal bundle over the compact homogeneous base $K/H$ with the solvmanifold $\hat{Z}\ltimes V$ as fiber.
	\end{remark*}
	
	\begin{notation}\label{not:reduc decomp for polar}
		Let $M$ be a stable $U\ltimes_\theta V$-homogeneous manifold. We consider from here onward the following reductive decomposition for $M=G/H$, where $G=U\ltimes V$,
		\begin{equation*}\label{eq:polar red decomp}
			\gg = \hh \oplus \ll \oplus \hat{\zz} \oplus V, \hspace{1cm} \kk = \hh\oplus\ll,
		\end{equation*}
		where $\ll$ is an $\Ad(H)$-submodule complementary  to $\hh$ in $\kk$. We moreover denote 
		\begin{equation*}\label{eq:m_u}
			\mm_\uu\coloneqq\ll \oplus \hat{\zz}
		\end{equation*}
		as our choice of reductive complement to $\hh$ in $\uu$, and pick 
		\begin{equation*}\label{eq:mm=ll+zz+V}
			\mm \coloneqq \ll  \oplus \hat{\zz} \oplus V
		\end{equation*}
		as our reductive complement to $\hh$ in $\gg$.
	\end{notation}
	
	We restrict our attention to $U\ltimes_{\theta} V$-invariant metrics whose geometry is \textit{compatible} with the stable representation $\theta$. That is, we consider $U\ltimes V$-invariant metrics $g$ for which there is a weight space decomposition $\bigoplus_{r=1}^p V^{\alpha_r}$, as in Remark \ref{remark:root decomp}, such that $V^{\a_1} \perp_g \ldots \perp _gV^{\a_p}$. Moreover, we require that the metric $g$ is well adapted to the decomposition $\uu\ltimes V$, namely that $\mm_\uu \perp_g V $. 
	 
	 We want first to show that these metrics are (unimodular) Ricci flow invariant. For the case where $\hat{\zz}\neq0$,  Will and E. Lauret, in \cite{will} and in \cite{lw}, discovered in this family of metrics examples of Ricci negative left-invariant metrics on nonsolvable, nonsemisimple Lie groups (see \cite[Theorem 3.3]{will} and \cite[Theorem 5.1]{lw}). The smallest dimensional example of those constructions is $\left(\SU(2)\times \mathbb{R}\right)\ltimes \mathbb{R}^3$, where $\mathbb{R}^3$ is an irreducible $\SU(2)$-representation and $\mathbb{R}$ acts on $\mathbb{R}^3$ via a multiple of the identity. If $\hat{\zz}=0$, then $\gg$ is unimodular. So, by \cite[Theorem 2]{dot}, $M=G/H$ does not admit Ricci negative metrics, as there exists a Ricci nonnegative direction in $V$. Thus, for example, in relation to the recently solved Alekseevskii conjecture \cite[Theorem A]{bl23}, these spaces pose no difficulty. On the other hand, confirming the dynamical Alekseevskii conjecture on them, even in the special case that the metric $g$ satisfies $\ll \perp_g V^{\a_1} \perp_g \ldots \perp_g V^{\a_p} $, is not trivial. 
	\begin{notation}\label{not:orth frame}
		Let $M$ be a stable $U\ltimes_\theta V$-homogeneous manifold with a weight space decomposition $\bigoplus_{\a \in \mathcal{I}^*} V^{\a}$, where $\mathcal{I}^* \coloneqq \{\a_1, \ldots,\a_p\} \subset \uu^*$ and $\theta$ is as in Remark \ref{remark:root decomp}. Fix once and for all an $\Ad(K)$-invariant background metric $\langle \cdot,\cdot\rangle$ on $\gg$ such that $\kk \perp \hat{\zz} \perp V^{\alpha_1} \perp \ldots \perp V^{\alpha_p}$, and let $\ll \coloneqq \hh^\perp \subset \kk$. Then, with respect to this background metric, $\hh \perp \ll\perp\hat{\zz} \perp V^{\alpha_1}\perp \ldots \perp V^{\alpha_p} $. Recall that we denote $\mm \coloneqq \ll  \oplus \hat{\zz} \oplus V$ and $\mm_\uu\coloneqq\ll\oplus\hat{\zz}$.
		
		We then establish the following notation. Let $g(\cdot,\cdot) = \langle P\cdot, \cdot\rangle$, where $P$ is an $\Ad(H)$-equivariant, positive-definite operator on $\mm$ such that $\mm_\uu \perp_g V^{\a_1}\perp_g \ldots \perp_g V^{\a_p}$. Since $\ll \perp_g V$, let $\{\bar{L}_i\}_{i=1}^{m}$, where $m = \dim \ll$, be a diagonalizing $\langle \cdot,\cdot\rangle$-orthonormal frame for $P^{\ll}\coloneqq\pr_{\ll}\circ \left.P\right| _{\ll}$, where $\pr_{\ll}$ is the $\langle\cdot,\cdot\rangle$-orthogonal projection to $\ll$, with respective eigenvalues $0<g^\ll_1\leq \ldots \leq g^\ll_m$. 
		Let, analogously, $\{\bar{A}^{\a}_i\}_{i=1}^{d_\a}$, where $d_\a= \dim V^\a$, be a diagonalizing $\langle \cdot,\cdot\rangle$-orthonormal frame for $P^{V^\a}\coloneqq\pr_{V^\a}\circ \left.P\right| _{V^\a}$
		with respective eigenvalues $0<g_1^\a \leq \ldots \leq g_{d_\a}^\a $. If we define $L_i :=\frac{\bar{L}_i}{\sqrt{g^\ll_i}}$, for $1 \leq i \leq m$, and complete it with a $g$-orthonormal frame for the orthogonal complement $\ll^\perp \subset \ll\oplus\hat{\zz}=\mm_\uu$, we get a $g$-orthonormal frame $\{U_i\}_{i=1}^{\dim\mm_\uu}$ for $\mm_\uu$. Note that $\ll$ is not necessarily $g$-perpendicular to $\hat{\zz}$, even though this is the case for the background metric $\langle \cdot,\cdot\rangle$. We moreover define $A^\a_i :=\frac{\bar{A}^\a_i}{\sqrt{g^\a_i}}$, for $1 \leq i \leq d_\a$, to get a $g$-orthonormal frame for each $V^\a$, $\a \in \mathcal{I}^* = \{\a_1,\ldots,\a_p\}$.
		In this way, we obtain a $g$-orthonormal frame $ \{X_i\}_{i=1}^{\dim \mm}\coloneqq\{U_i\}\cup\{A_i\}$ for $\mm$, adapted to the geometric condition $\mm_\uu\perp_g V^{\a_1} \perp_g \ldots \perp_g V^{\a_p} $, which will be used repeatedly below to express the Ricci curvature of $M$.
	\end{notation}
	
	Let us now recall the Ricci tensor formula \eqref{eq:ricci formula}
	\[\ric_g(X,X)= - \frac{1}{2}\Bk(X,X) -\frac{1}{2}\sum_i \Vert[X,X_i]_{\mm}\Vert_g^2+ \frac{1}{4}\sum_{i,j}g([X_i,X_j]_{\mm},X)^2 - g([\Hm_g,X]_{\mm},X),\]
	which we rewrite for convenience in the polarized form, for all $X,Y \in \mm$, as
	\begin{equation}\label{eq:ricci XY formula}
		\ric_g(X,Y) = -\frac{1}{2}\Bk(X,Y)+\mo_g(X,Y)-\h_g(X,Y),
	\end{equation}
	where
	\begin{equation}
		\mo_g(X,Y) \coloneqq -\frac{1}{2}\sum_i g\left(\left[X,X_i\right]_\mm,\left[Y,X_i\right]_\mm\right)+\frac{1}{4}\sum_{i,j} g\left(\left[X_i,X_j\right]_\mm,X\right)g\left(\left[X_i,X_j\right]_\mm,Y\right)
	\end{equation}
	and
	\begin{equation}
		\h_g(X,Y) \coloneqq g\left(S^g\left(\ad{\Hm_g}\right)(X),Y\right)= \frac{1}{2}\left(g\left(\left[\Hm_g,X\right]_\mm,Y\right)+g\left(\left[\Hm_g,Y\right]_\mm,X\right)\right).
	\end{equation}
	With this notation we have that 
	\begin{equation}\label{eq:uni Ricci formula}
		\ric^\star_g (X,Y)=-\frac{1}{2}\Bk(X,Y) +\mo_g(X,Y).
	\end{equation}
	
	\begin{proposition} \label{prop:Ricci invariant semidirect}
		Let $M$ be a stable $U\ltimes_\theta V$-homogeneous manifold with a weight space decomposition $V=\bigoplus_{\a \in \mathcal{I}^*} V^{\a}$, $\mathcal{I}^* = \{\a_1,\ldots,\a_p\}$,  and reductive decomposition $\gg=\hh\oplus\mm_\uu\oplus V$. Then the space of $U\ltimes V$-invariant metrics on $M$ such that $\mm_\uu \perp V^{\a_1} \ldots \perp V^{\a_p} $ is Ricci flow invariant.
	\end{proposition}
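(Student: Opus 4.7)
The plan is to reduce to the unimodular Ricci flow via Corollary \ref{cor:uni RF is equivalent to RF} and then to show that $\ric_g^\star$ is block diagonal with respect to the orthogonal decomposition $\mm_\uu \oplus V^{\a_1} \oplus \cdots \oplus V^{\a_p}$. Since the weight spaces $V^{\a_r}$ are fixed subspaces of $\gg$ determined by the background data in Notation \ref{not:orth frame}, the orthogonality conditions cut out an affine subspace of the $\Ad(H)$-invariant inner products on $\mm$. By the standard tangent-space criterion for invariance under a smooth ODE, it suffices to verify that $\ric_g^\star(X,Y)=0$ whenever $g$ lies in the family and $X,Y$ lie in two distinct summands of the decomposition.

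By \eqref{eq:uni Ricci formula}, this reduces to checking the vanishing of $\Bk$ and of $\mo_g$ separately on such mixed pairs. For the Killing form, the defining structure $\gg = \uu \ltimes_\theta V$, together with $[V,V]=0$ and $[\uu, V^{\a_r}] \subset V^{\a_r}$, immediately gives $\Bk(\uu, V) = 0$ (for $X \in \uu$ and $Y \in V$ the map $\ad X \circ \ad Y$ sends $\uu$ into $V$ and $V$ to $0$, so has zero trace) and $\Bk(V^{\a_i}, V^{\a_j}) = 0$ for all $i,j$ (since $\ad X \circ \ad Y \equiv 0$ when both $X,Y \in V$).

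For $\mo_g$, I would work in the $g$-orthonormal frame $\{U_i\} \cup \{A^\a_i\}$ of Notation \ref{not:orth frame}, which is adapted to the decomposition, and inspect the two sums defining $\mo_g$ term by term. In the first sum, for every frame vector $X_i$ the bracket relations above force $[X,X_i]_\mm$ and $[Y,X_i]_\mm$ into distinct summands (with at least one of them vanishing whenever $X_i \in V$), so their $g$-inner product is zero. In the second sum, a nonzero term $g([X_i,X_j]_\mm, X) \cdot g([X_i,X_j]_\mm, Y)$ would require $[X_i,X_j]_\mm$ to have components in two distinct summands; but under our relations every bracket $[X_i, X_j]$ lies in a single summand ($\mm_\uu$ if both factors are in $\mm_\uu$, $V^{\a_r}$ if one is in $\mm_\uu$ and the other in $V^{\a_r}$, and $0$ otherwise), ruling out any contribution. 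I do not anticipate a serious obstacle here: once the adapted frame is in place, the argument becomes a short case analysis on summand membership, and the only mildly subtle point is exactly this observation that no bracket $[X_i,X_j]$ can simultaneously populate two distinct summands of the decomposition.
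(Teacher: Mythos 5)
Your computation that $\Bk$ and $\mo_g$ vanish on mixed pairs is correct and is essentially the computation the paper performs: $\Bk(\gg,V)=0$ because $\ad X\circ\ad A$ is nilpotent for $A\in V$, and in the adapted frame every bracket $[X_i,X_j]$ lands in a single summand of $\mm_\uu\oplus V^{\a_1}\oplus\cdots\oplus V^{\a_p}$, so both sums in $\mo_g$ die on mixed pairs. The tangent-space criterion for a linear condition on $g$ is also the right framework.

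The gap is your opening move. The proposition concerns the Ricci flow, whose vector field is $-2\ric_g=-2\bigl(-\tfrac12\Bk+\mo_g-\h_g\bigr)$, and you never address the mean-curvature term $\h_g$. You propose to dispose of it by ``reducing to the unimodular Ricci flow via Corollary \ref{cor:uni RF is equivalent to RF},'' but that corollary only asserts that the two flows with the same initial condition produce isometric geometries, related by a time-dependent gauge in $\Aut^H_\mm(\gg)$; the two solution curves in $(\Sym^2(\mm))^H_+$ are in general different, so invariance of a set of inner products under one ODE does not formally transfer to the other. (It would transfer only if you additionally showed the gauge automorphisms preserve the decomposition, which requires the very fact you are skipping.) What you actually prove with your block-diagonality of $\ric^\star_g$ is the paper's Corollary \ref{cor: polar is uni RF invariant}, not the Proposition. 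The missing step is short but necessary: since $\kk$ is compactly embedded, $\tr(\ad\kk)=0$, and since $V$ is an abelian (hence nilpotent) ideal, $\tr(\ad A)=0$ for all $A\in V$; therefore $g(\Hm_g,V)=0$, i.e.\ $\Hm_g\in\mm_\uu$. Then $[\Hm_g,\mm_\uu]_\mm\subset\mm_\uu$ and $[\Hm_g,V^{\a}]\subset V^{\a}$, so $\h_g(X,Y)=\tfrac12\bigl(g([\Hm_g,X]_\mm,Y)+g([\Hm_g,Y]_\mm,X)\bigr)$ also vanishes on mixed pairs, and the full $\ric_g$ is block diagonal. With that added, your argument closes.
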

	\begin{proof}
		%Since, by hypothesis, $\kk$ is compactly embedded in $\gg$ we have that $\tr \left(\ad(\kk)\right)=0$. 
		%Furthermore, 
		Since $V \subset \gg$ is an abelian ideal, $\tr(\ad A)=0$ for all $A \in V$. Thus, for any $U\ltimes V$-invariant metric $g$ such that $\mm_\uu \perp_g V^{\a_1} \perp_g  \ldots \perp_g V^{\a_p}$, 
		\begin{equation*}
			g\left(\Hm_g,A\right) \coloneqq \tr (\ad A)=0,
		\end{equation*}
		i.e., the mean curvature vector $\Hm_g \in \mm_\uu$.
		
		Let $Y \in \mm_\uu$ and $A \in V$, then the Ricci tensor formula \eqref{eq:ricci XY formula} for the orthonormal frame $\{X_i\}$ as in Notation \ref{not:orth frame} gives us
		\begin{align*}
			\ric_g(Y,A)=&  - \frac{\Bk(Y,A)}{2}-\frac{1}{2}\sum_i g([Y,X_i]_{\mm}, [A,X_i]_{\mm}) +\frac{1}{4}\sum_{i,j}g([X_i,X_j]_{\mm},Y)g([X_i,X_j]_{\mm},A)  \\
			&- \frac{1}{2}\left(g\left(\left[\Hm_g,Y\right]_{\mm}, A\right)+g\left(\left[\Hm_g,A\right]_{\mm}, Y\right)\right)\\
			=& - \frac{\Bk(Y,A)}{2}-\frac{1}{2}\sum_i g([Y,X_i]_{\mm}, [A,X_i]_{\mm})+\frac{1}{4}\sum_{i,j}g([X_i,X_j]_{\mm},Y)g([X_i,X_j]_{\mm},A),
		\end{align*}
		since $\Hm_g \in \mm_\uu \subset \uu$,  $\left[\uu,\uu\right] \subset \uu$ and $\left[\gg,V\right]\subset V$.
		
		Note that for an abelian ideal $\mathfrak{a}$ we have that $\Bk (\mathfrak{a}, \gg)=0$. This can be seen since for any $X \in \gg$ and $A \in \aa$, $\left(\ad X\circ \ad A\right)(\gg) \subset \aa$, thus $\left(\ad X\circ \ad A\right)^2=0$, i.e., $\ad X\circ \ad A$ is nilpotent, hence traceless. Thus, using that $V$ is an abelian ideal of $\gg$ and the $g$-orthonormal basis $\{X_i\}$ above, we get that
		\begin{align*}
			\ric_g(Y,A)=&  - \frac{\Bk(Y,A)}{2}-\frac{1}{2}\sum_i g([Y,X_i]_{\mm}, [A,X_i]_{\mm}) +\frac{1}{4}\sum_{i,j}g([X_i,X_j]_{\mm},Y)g([X_i,X_j]_{\mm},A)  \\
			=&-\frac{1}{2}\sum_i g([Y,X_i]_{\mm}, [A,X_i]_{\mm})+\frac{1}{4}\sum_{i,j}g([X_i,X_j]_{\mm},Y)g([X_i,X_j]_{\mm},A)\\
			=& -\frac{1}{2}\left(\sum_i g([Y,U_i]_{\mm}, [A,U_i]_{\mm})+\sum_i g([Y,A_i]_{\mm}, [A,A_i]_{\mm})\right)\\
			&+\frac{1}{4}\sum_{i,j}g([X_i,X_j]_{\mm},Y)g([X_i,X_j]_{\mm},A) \\
			=& \ \frac{1}{2}\sum_{i,j,\a}g([A^\a_i,X_j]_{\mm},Y)g([A^\a_i,X_j]_{\mm},A)+\frac{1}{4}\sum_{i,j}g([U_i,U_j]_{\mm},Y)g([U_i,U_j]_{\mm},A)\\
			=&\ 0,
		\end{align*}
	where in the last two equalities we used that $[\mm_\uu,\mm_\uu]_{\mm} \subset \mm_\uu$ and that $\mm_\uu \perp_g V$.
		
		Now let  $A^\a \in V^\a$, $A^\b \in V^\b$, $\a \neq \b$. Since  $\Bk\left(V, \cdot\right)=0$ and each $V^\gamma$ is a $\gg$-submodule, we have that
		\begin{align*}
			\ric_g(A^\a,A^\b)=&  - \frac{\Bk(A^\a,A^\b)}{2}-\frac{1}{2}\sum_i g([A^\a,X_i]_{\mm}, [A^\b,X_i]_{\mm})\\ &+\frac{1}{4}\sum_{i,j}g([X_i,X_j]_{\mm},A^\a)g([X_i,X_j]_{\mm},A^\b)  \\
			&- \frac{1}{2}\left(g\left(\left[H_g,A^\a\right]_{\mm}, A^\b\right)+g\left(\left[H_g,A^\b\right]_{\mm}, A^\a\right)\right)\\
			=&\ \frac{1}{2}\sum_{i,j,\gamma}g\left(\left[U_i,A^\gamma_j\right]_{\mm},A^\a\right)g\left(\left[U_i,A^\gamma_j\right]_{\mm},A^\b\right)=0.
		\end{align*}
		
		This means that for all $\a,\b \in \mathcal{I}^*$,
		\[\frac{dg(0)}{dt}(V^{\a},V^{\b})=0=\frac{dg(0)}{dt}(\mm_\uu,V).\]
		Thus, from classic ODE theory, the set of metrics $\{g \in \text{Sym}^2(\mm)^{H}_+ \ \vert \ \mm_\uu\perp_g V^{\a_1} \perp_g \ldots \perp_g V^{\a_p}\}$ is an invariant subset for the Ricci flow equation \eqref{eq:ricci flow}. \\
	\end{proof}
	
	\begin{corollary}\label{cor: polar is uni RF invariant}
		Let $M$ be a stable $U\ltimes_\theta V$-homogeneous manifold with a weight space decomposition $V=\bigoplus_{\a \in \mathcal{I}^*} V^{\a}$, $\mathcal{I}^* = \{\a_1,\ldots,\a_p\}$,  and reductive decomposition $\gg=\hh\oplus\mm_\uu\oplus V$. Then the space of $U\ltimes V$-invariant metrics on $M$ such that $\mm_\uu \perp V^{\a_1} \ldots \perp V^{\a_p} $ is unimodular Ricci flow invariant.
	\end{corollary}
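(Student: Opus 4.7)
The plan is to read off the corollary directly from the proof of Proposition \ref{prop:Ricci invariant semidirect}, noting that that proof established slightly more than its statement records. From (\ref{eq:uni Ricci formula}), the unimodular Ricci tensor is
\[
\ric^\star_g(X,Y) = -\tfrac{1}{2}\Bk(X,Y) + \mo_g(X,Y),
\]
so it differs from $\ric_g$ only by the absence of the symmetrized-derivation term $\h_g$. Consequently, to verify the invariance claim it suffices to show that the same off-diagonal components that forced $\ric_g$ to vanish in the proposition also force $\ric^\star_g$ to vanish, i.e., that each individual summand in $\ric_g$ separately vanishes on these mixed components.

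I would then re-examine the proof of Proposition \ref{prop:Ricci invariant semidirect} and point out that, for both $Y \in \mm_\uu$, $A \in V$ and for $A^\a \in V^\a$, $A^\b \in V^\b$ with $\a \neq \b$, each of the three summands $-\tfrac{1}{2}\Bk$, $\h_g$, and $\mo_g$ was established to vanish on its own. Indeed, the Killing form vanishes because $V$ is an abelian ideal, so $\ad X \circ \ad A$ is nilpotent whenever $A \in V$; the $\h_g$ piece vanishes because $\Hm_g \in \mm_\uu$ combined with $[\uu,\uu] \subset \uu$, $[\uu, V^\gamma] \subset V^\gamma$ and the orthogonality assumptions on $g$ force $[\Hm_g, Y]_\mm \in \mm_\uu$ and $[\Hm_g, A^\gamma]_\mm \in V^\gamma$; and the $\mo_g$ term vanishes through the explicit cancellation carried out in that proof. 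Dropping the $\h_g$ summand therefore still leaves $\ric^\star_g(Y,A) = 0$ and $\ric^\star_g(A^\a, A^\b) = 0$ on the corresponding mixed components.

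Finally, classical ODE theory applied to (\ref{eq:unimodular RF}) on the finite-dimensional space $(\Sym^2(\mm))^H$ yields the desired invariance of the subset $\{g \in (\Sym^2(\mm))^H_+ \mid \mm_\uu \perp_g V^{\a_1} \perp_g \ldots \perp_g V^{\a_p}\}$ under the unimodular Ricci flow. No substantive obstacle is anticipated here; the entire nontrivial content has already been packaged in the individual vanishing of $\mo_g$ inside the proof of Proposition \ref{prop:Ricci invariant semidirect}, and only needs to be extracted. An alternative route, invoking Corollary \ref{cor:uni RF is equivalent to RF} to transfer Proposition \ref{prop:Ricci invariant semidirect} to the unimodular setting, is less direct because the gauge relating the two flows would need to be checked to preserve the orthogonality structure $\mm_\uu \perp_g V^{\a_1} \perp_g \ldots \perp_g V^{\a_p}$; the direct approach sketched above circumvents this issue entirely.
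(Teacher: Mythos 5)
Your proposal is correct and takes essentially the same route as the paper. The paper's own proof likewise observes that the computation inside Proposition \ref{prop:Ricci invariant semidirect} establishes the separate vanishing of $\Bk$ and $\mo_g$ (not merely of their signed sum with $\h_g$) on the mixed components $\mm_\uu \times V$ and $V^\a \times V^\b$ ($\a \neq \b$), so that $\ric^\star_g = -\tfrac{1}{2}\Bk + \mo_g$ vanishes there as well, and then concludes by ODE theory; the only small thing the paper records that you omit is the preliminary remark, citing \cite[Lemma 2.6]{bl18}, that $\ric^\star_g$ is $\Ad(H)$-invariant, ensuring the unimodular flow stays within $(\Sym^2(\mm))^H_+$ in the first place.
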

	\begin{proof}
		First of all, recall that the unimodular Ricci tensor is $\Ad(H)$-invariant \cite[Lemma 2.6]{bl18}. This implies, by uniqueness of the solution, that the unimodular Ricci flow indeed preserves $\Ad(H)$-invariant inner products on $\mm$.
		
		As we have seen in the computation of Proposition \ref{prop:Ricci invariant semidirect}, for all $Y \in \mm_\uu$ and $A\in V$
		\[\Bk(Y,A) = \mo_g(Y,A)=0.\]
		As well as 
		\[\Bk(A^\alpha,A^\beta)= \mo_g(A^\alpha,A^\beta)=0,\]
		for all $A^\alpha \in V^\alpha$ and $A^\beta \in V^\beta$, $\a\neq\b$.
		
		Thus the same is true for the unimodular Ricci tensor $\ric^\star_g = -\frac{1}{2}\Bk + \mo_g$, which implies that the set of metrics $\{g \in \text{Sym}^2(\mm)^{H}_+ \ \vert \ \mm_\uu\perp_g V^{\a_1} \perp_g \ldots \perp_g V^{\a_p}\}$ is an invariant subset for the unimodular Ricci flow equation \eqref{eq:unimodular RF}.
	\end{proof}
	
	\begin{definition}[$\theta$-adapted standard metrics]\label{def:polar metrics}
		Let $M$ be a stable $U\ltimes_\theta V$-homogeneous manifold.
		We call \textit{$\theta$-adapted standard metrics} the set of $U\ltimes V$-invariant metrics $\mathfrak{S}_{U\ltimes V}$ such that, for each $g \in \mathfrak{S}_{U\ltimes V}$, there is a weight space decomposition $V=\bigoplus_{\a \in \mathcal{I}^*} V^{\a}$, $\mathcal{I}^* = \{\a_1,\ldots,\a_p\}$, and a reductive decomposition $\gg=\hh\oplus\mm_\uu\oplus V$ such that $\mm_\uu\perp_g V^{\a_1} \perp_g \ldots \perp_g V^{\a_p}$.
	\end{definition}

Note that $\theta$-adapted standard metrics are, in particular, \textit{standard} in the sense of \cite{heb}; namely, the isometric left action of the abelian nilradical $V$ has an \textit{integrable horizontal distribution} (see \cite[Definition 2.1]{bl22}). Moreover, they are analogous to \textit{awesome metrics} on semisimple homogeneous spaces---in which the Cartan decomposition is an orthogonal decomposition---in the sense that they require a compatibility condition between the metric and the representation of $\kk$ on $\gg$. Finally, as in the case of awesome metrics, they have a nice splitting of the Ricci tensor and therefore provide good initial conditions for the analysis of the long-time behavior of the Ricci flow (see e.g. \cite{n} and \cite{ara2} for the awesome case).
	\begin{definition}[$g$-adapted reductive complement]\label{def:adapted complement}
		Let $g$ be a $\theta$-adapted standard metric. We say that a reductive decomposition $\gg=\hh\oplus\mm_\uu \bigoplus_{\a \in \mathcal{I}^*} V^{\a}$ composed of a weight space decomposition $V=\bigoplus_{\a \in \mathcal{I}^*} V^{\a}$, $\mathcal{I} = \{\a_1,\ldots,\a_p\}$, as in Remark \ref{remark:root decomp}, and a reductive $\hh$-complement $\mm_{\uu} \subset \uu$ is \textit{adapted} to $g$ when $\mm_\uu\perp_g V^{\a_1} \perp_g \ldots \perp_g V^{\a_p}$.
	\end{definition}
	
	\section{Algebraic bounds for the unimodular Ricci curvature}\label{section:algebraic bounds}
	In order to prove Theorem \ref{thm:A}, we need first to get some a priori algebraic estimates for the Ricci curvature of these metrics both in the abelian noncompact part corresponding to $V$ and the compact part corresponding to $K/H$. 
	
	\begin{proposition}\label{prop:A bounds}
		Let $(M,g)$ be a stable $U\ltimes_\theta V$-homogeneous manifold with a $\theta$-adapted standard metric $g$ and let $\gg=\hh\oplus\mm_\uu \bigoplus_{\a \in \mathcal{I}^*} V^{\a}$ be a reductive decomposition adapted to $g$. Let $\{\bar{A}_i^\alpha\}_{i=1}^{d_\a}$ be a $\left.g\right\vert_{V^\a\times V^\a}$-diagonalizing $\langle\cdot,\cdot\rangle$-orthonormal frame for $V^\a$, with respective eigenvalues $g_i^\alpha$, and let $A_i^\alpha \coloneqq \frac{\bar{A}_i^\alpha}{\sqrt{g_i^\alpha}}$ (see Notation \ref{not:orth frame}). Let moreover $\{U_k\}_{k=1}^{\dim \mm_\uu}$ be a $g$-orthonormal basis for $V^\perp=\mm_\uu$.
		
		Then,
		\begin{equation}\label{eq:Ricci-A formula}
			\ric^\star\left(A^\alpha_i,A^\alpha_i\right) = \frac{1}{2}\sum_{\substack{j, k}} \left\langle \left[U_k, \bar{A}^\a_i\right],\bar{A}^\alpha_j \right\rangle^2\left(\frac{g^\alpha_i}{g^\a_j}-\frac{g^\a_j}{g^\alpha_i}\right).
		\end{equation}
	\end{proposition}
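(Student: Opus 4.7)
\textbf{Proof plan for Proposition \ref{prop:A bounds}.}

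The plan is to start from the splitting $\ric^\star_g = -\tfrac{1}{2}\Bk_g + \mo_g$ in \eqref{eq:uni Ricci formula} and show that the Killing form contribution vanishes while the $\mo_g$-contribution collapses to the stated expression. First I would observe that, exactly as in the proof of Proposition \ref{prop:Ricci invariant semidirect}, the fact that $V$ is an abelian ideal forces $\Bk_g(A_i^\a,\cdot)\equiv 0$ via the nilpotence of $\ad X\circ\ad A_i^\a$. So the computation reduces to $\ric^\star_g(A_i^\a,A_i^\a)=\mo_g(A_i^\a,A_i^\a)$.

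Next I would expand $\mo_g(A_i^\a,A_i^\a)$ in the $g$-orthonormal frame $\{X_l\}=\{U_k\}\cup\{A_j^\gamma\}$ from Notation \ref{not:orth frame} and prune the sums. Since $V$ is abelian, $[A_i^\a,A_j^\gamma]=0$ for all $\gamma,j$, so the $-\tfrac12\sum_l g([A_i^\a,X_l]_\mm,[A_i^\a,X_l]_\mm)$ piece keeps only the $X_l=U_k$ terms. Using that each $V^\a$ is a $\uu$-submodule (so $[U_k,\bar A_i^\a]\in V^\a$) together with $g(\bar A_m^\a,\bar A_j^\a)=g_j^\a\delta_{mj}$, this part rewrites as $-\tfrac12\sum_{k,j}\langle[U_k,\bar A_i^\a],\bar A_j^\a\rangle^{2}\,g_j^\a/g_i^\a$. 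For the $\tfrac14\sum_{l,m}g([X_l,X_m]_\mm,A_i^\a)^{2}$ piece, the $V\times V$ block dies (abelian), the $\mm_\uu\times\mm_\uu$ block dies because $[\uu,\uu]\subset\uu$ is $g$-orthogonal to $V$, and the cross-terms with an $A_j^\gamma$ with $\gamma\neq\a$ die by $V^\gamma\perp_g V^\a$. Doubling for the symmetric role of $(l,m)$, this piece becomes $\tfrac12\sum_{k,j}\langle[U_k,\bar A_j^\a],\bar A_i^\a\rangle^{2}\,g_i^\a/g_j^\a$.

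The key algebraic input now is the compatibility condition \eqref{item:awesome}: since $\theta(U_k)|_{V^\a}-\a(U_k)\Id$ is a semisimple operator with purely imaginary eigenvalues and the basis $\{\bar A_j^\a\}$ is $\langle\cdot,\cdot\rangle$-orthonormal, the matrix of $\ad U_k$ in this basis is $\a(U_k)\Id$ plus a $\langle\cdot,\cdot\rangle$-skew-symmetric part (Remark \ref{remark:root decomp}). Hence
\[
\langle[U_k,\bar A_j^\a],\bar A_i^\a\rangle \;=\; \a(U_k)\delta_{ij}-\langle[U_k,\bar A_i^\a],\bar A_j^\a\rangle+2\a(U_k)\delta_{ij},
\]
so for $i\neq j$ one has $\langle[U_k,\bar A_j^\a],\bar A_i^\a\rangle^{2}=\langle[U_k,\bar A_i^\a],\bar A_j^\a\rangle^{2}$, while for $i=j$ both factors equal $\a(U_k)^{2}$ and the coefficient $(g_i^\a/g_j^\a - g_j^\a/g_i^\a)$ vanishes. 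Substituting this symmetry in the second piece and combining with the first yields
\[
\ric^\star_g(A_i^\a,A_i^\a)=\tfrac{1}{2}\sum_{k,j}\langle[U_k,\bar A_i^\a],\bar A_j^\a\rangle^{2}\Bigl(\tfrac{g_i^\a}{g_j^\a}-\tfrac{g_j^\a}{g_i^\a}\Bigr),
\]
which is the claimed identity. The only conceptually subtle step is the skew-symmetry exploitation in the last paragraph; the rest is a careful accounting of which brackets survive the orthogonality and abelianness.
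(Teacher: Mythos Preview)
Your argument is correct and follows the same approach as the paper: kill the Killing-form term via $\Bk(V,\cdot)=0$, prune $\mo_g$ using abelianness of $V$, $[\uu,\uu]\subset\uu$, and the $g$-orthogonal weight-space splitting, then invoke $\ad U_k|_{V^\a}=\a(U_k)\Id + (\langle\cdot,\cdot\rangle\text{-skew})$ to match the squared structure constants for $i\neq j$. One slip to fix: your displayed identity should read $\langle[U_k,\bar A_j^\a],\bar A_i^\a\rangle = 2\a(U_k)\delta_{ij}-\langle[U_k,\bar A_i^\a],\bar A_j^\a\rangle$ (your version gives $3\a(U_k)\delta_{ij}$), but this is harmless since you only use the $i\neq j$ case and correctly note that the $i=j$ coefficient $(g_i^\a/g_i^\a - g_i^\a/g_i^\a)$ vanishes.
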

	
	\begin{proof}
		Recall first that the abelian ideal $V$ is contained in the kernel of the Killing form $\Bk$. Moreover, since $V=V^{\a_1} \perp_g \ldots \perp_g V^{\a_p}$, a direct computation using the unimodular Ricci tensor formula \eqref{eq:uni Ricci formula} with our special basis yields the following expression in the directions $A_i^\alpha$,
		\begin{align*}
			\ric^\star\left(A^\alpha_i,A^\alpha_i\right)&= -\frac{1}{2}\sum _{\substack{\beta, j, k}}g\left(\left[A^\alpha_i,U_k\right],A^\beta_j\right)^2 +\frac{1}{2}\sum_{\substack{\beta, j, k}}g\left(A^\alpha_i,\left[U_k,A^\beta_j\right]\right)^2 +\frac{1}{4}\sum_{j,k}g\left(A^\alpha_i,\left[U_j,U_k\right]_\mm\right)^2 \\
			&= -\frac{1}{2}\sum _{\substack{j, k}}g\left(\left[A^\alpha_i,U_k\right],A^\a_j\right)^2 +\frac{1}{2}\sum_{\substack{j, k}}g\left(A^\alpha_i,\left[U_k,A^\a_j\right]\right)^2.
		\end{align*}
		
		By hypothesis, $U_k$ acts on $V^\a$ as $\left.\ad U_k\right|_{V^\a}=\a(U_k)\Id + \J^\a_{U_k}$, where $\J^\a_{U_k}$ is a skew-symmetric operator with respect to the background metric $\langle \cdot,\cdot \rangle$.
		
		Thus, for $i \neq j$, 
		\[\left\langle \left[U_k, A_i^\alpha\right], A_j^\a \right \rangle^2 = \left\langle \J^\a_{U_k} A_i^\alpha, A_j^\a \right\rangle^2=\left \langle A_i^\alpha, \J^\a_{U_k} A_j^\a \right \rangle^2=\left \langle A_i^\alpha, \left[U_k, A_j^\a \right] \right \rangle^2,\]
		since $\langle \cdot, \cdot \rangle$ is $\ad\left(\kk\right)$-invariant. 
		
		Therefore, using our $g$-diagonalizing basis $\bar{A}^\a_i$ with respective eigenvalue $g_i^\a$, we have that 
		\begin{align*}
			\ric^\star\left(A^\alpha_i,A^\alpha_i\right)&= \frac{1}{2}\sum _{j,k}\left(g\left(A_i^\alpha,\left[U_k,A^\a_j\right]\right)^2 -g\left(\left[A^\alpha_i,U_k\right],A^\a_j\right)^2\right)\\
			&=\frac{1}{2}\sum _{j,k}\left(g_i^\alpha\left \langle \bar{A}_i^\alpha,\left[U_k,A^\a_j\right]\right\rangle^2 -g_j^\a\left \langle \left[A^\alpha_i,U_k\right],\bar{A}^\a_j\right\rangle^2\right)\\
			&=\frac{1}{2}\sum_{j,k} \left\langle \left[U_k, \bar{A}^\a_j\right],\bar{A}_i^\alpha \right\rangle^2\left(\frac{g^\alpha_i}{g^\a_j}-\frac{g^\a_j}{g^\alpha_i}\right).
		\end{align*}
	\end{proof}
	
	\begin{remark*}\label{remark:minimal is equivalent to}
		It is worth mentioning that if $\theta \colon \uu \to \gl(V)$ is the representation $\theta(U) \coloneqq \left.\ad U\right|_{V}$, then for $A \in V$
		\[\ric^\star(A,A)= \frac{1}{2}g\left(\sum_k\left[\theta\left(U_k\right),\theta\left(U_k\right)^{t_g}\right]A,A\right).\]
		Thus, $\left.\ric^\star\right|_V =0$ precisely when the representation $\theta$ is a minimal point for the action by conjugation of $\GL(V)$ on the space of endomorphisms $\End\left(\uu, \gl(V)\right)$ with the canonical trace inner product induced by the metric $g$. The formula \eqref{eq:Ricci-A formula} says that under our hypothesis on the representation $\theta$, if $\left.g\right|_{V\times V}$ is a metric such that $\theta$ acts as normal operators, then $\theta$ is minimal. Conversely, by \cite[Proposition 3.9]{jp}, if $\theta$ is minimal, since $\uu$ is compact, then $\theta$ acts by normal operators. Furthermore, since the minimality condition in this case does not depend on the metric $\left.g\right\vert_{\mm_\uu\times\mm_\uu}$, this means that for any given stable $\theta$ there is a minimal representation $\theta_{\min}$ conjugated to it. 
	\end{remark*}
	
	A direct application of the formula above entails the following corollary.
	
	\begin{corollary} \label{cor:Ricci-A and Ricci A-bar}
		Let $(M,g)$ be a stable $U\ltimes_\theta V$-homogeneous manifold with a $\theta$-adapted standard metric $g$ and let $\gg=\hh\oplus\mm_\uu \bigoplus_{\a \in \mathcal{I}^*} V^{\a}$ be a reductive decomposition adapted to $g$. 
		Let $\{\bar{A}_i^\alpha\}_{i=1}^{d_\a}$ be a $\left.g\right\vert_{V^\a\times V^\a}$-diagonalizing $\langle\cdot,\cdot\rangle$-orthonormal frame, with respective ordered eigenvalues $g_1^\a \leq \ldots \leq g_{d_\a}^{\a}$, and $A_i^\alpha \coloneqq \frac{\bar{A}_i^\alpha}{\sqrt{g_i^\alpha}}$ (see Notation \ref{not:orth frame}). Then for any $i_0 \leq d_\a$,
		\begin{equation}\label{ineq:Ricci-A is pos}
			\sum_{i=i_0} ^{d_\a} \ric ^\star \left(A_i^\alpha,A_i^\alpha\right)=\frac{1}{2}\sum_{\substack{k}}\sum_{\substack{i\geq i_0\\ j < i_0}}\left\langle \left[U_k, \bar{A}^\alpha_i\right],\bar{A}^\alpha_j \right\rangle^2\left(\frac{g^\alpha_i}{g^\alpha_j}-\frac{g^\alpha_j}{g^\alpha_i}\right)\geq 0.
		\end{equation}
		Moreover,
		\begin{equation}\label{ineq: Ricci-A-bar}
			\sum_{i=i_0} ^{d_\a} \ric ^\star \left(\bar{A}_i^\alpha,\bar{A}_i^\alpha\right)\geq \sum_{\substack{k}}\sum_{\substack{i\geq i_0\\ j < i_0}}\left\langle \left[U_k, \bar{A}^\alpha_i\right],\bar{A}^\alpha_j \right\rangle^2\left(\frac{\left(g_i^{\a}\right)^2}{g_j^\a}-g_j^\a\right)\geq0.
		\end{equation}
	\end{corollary}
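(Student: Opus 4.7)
The plan is to read off both inequalities directly from the identity \eqref{eq:Ricci-A formula} of Proposition \ref{prop:A bounds} via a careful index pairing that exploits an antisymmetry built into the formula.

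First I would sum \eqref{eq:Ricci-A formula} over $i = i_0,\dots,d_\alpha$ and split the inner sum over $j$ into the two blocks $j < i_0$ and $j \geq i_0$. The crucial algebraic input, already used in the proof of Proposition \ref{prop:A bounds}, is that since $\left.\ad U_k\right|_{V^\alpha} = \alpha(U_k)\Id + J^\alpha_{U_k}$ with $J^\alpha_{U_k}$ skew-symmetric for $\langle\cdot,\cdot\rangle$ and since the $\bar{A}^\alpha_i$ are $\langle\cdot,\cdot\rangle$-orthonormal, the squared bracket $\langle[U_k,\bar{A}^\alpha_i],\bar{A}^\alpha_j\rangle^2$ is symmetric in $(i,j)$ for $i \neq j$. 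On the other hand, the eigenvalue factor $g^\alpha_i/g^\alpha_j - g^\alpha_j/g^\alpha_i$ is antisymmetric in $(i,j)$ and vanishes on the diagonal. Hence, in the block $i,j \geq i_0$, each off-diagonal pair $(i,j),(j,i)$ cancels and the diagonal terms are already zero. What remains is the sum over $i \geq i_0$, $j < i_0$, in which the ordering $g^\alpha_j \leq g^\alpha_{i_0} \leq g^\alpha_i$ forces the eigenvalue factor to be non-negative, giving \eqref{ineq:Ricci-A is pos}.

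For the second inequality I would use $\bar{A}^\alpha_i = \sqrt{g^\alpha_i}\, A^\alpha_i$, so that $\ric^\star(\bar{A}^\alpha_i,\bar{A}^\alpha_i) = g^\alpha_i \, \ric^\star(A^\alpha_i,A^\alpha_i)$, and redo the same splitting. This time the asymmetric weight $g^\alpha_i$ prevents the full cancellation in the upper block $i,j \geq i_0$; instead, each pair $(i,j),(j,i)$ now combines into a contribution proportional to $(g^\alpha_i - g^\alpha_j)\bigl(g^\alpha_i/g^\alpha_j - g^\alpha_j/g^\alpha_i\bigr)$, which is non-negative as a product of two factors of the same sign. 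The remaining block $i \geq i_0$, $j < i_0$ contributes non-negatively for the same reason as in the first part, now additionally weighted by the positive factor $g^\alpha_i$. Summing the two blocks yields \eqref{ineq: Ricci-A-bar}.

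The whole argument is essentially bookkeeping once Proposition \ref{prop:A bounds} is in hand; no further geometric ingredient is needed beyond the normality of $\theta(U_k)$ on each weight space $V^\alpha$. The only mildly delicate step — and the main source of potential slip-ups — is correctly organizing the pairing in the upper block $i,j \geq i_0$ so that exact cancellation (in the first part) and strict positivity (in the bar part) both drop out cleanly.
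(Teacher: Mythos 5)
Your proposal is correct and follows essentially the same route as the paper: sum the identity of Proposition \ref{prop:A bounds}, split the $j$-sum into the blocks $j<i_0$ and $j\geq i_0$, and play the symmetry of $\left\langle\left[U_k,\bar{A}^\alpha_i\right],\bar{A}^\alpha_j\right\rangle^2$ in $(i,j)$ against the antisymmetry of the eigenvalue factor. For the barred inequality the paper instead compares $\sum_i\ric^\star\left(A^\alpha_i,A^\alpha_i\right)$ with $\tfrac{1}{g^\alpha_1}\sum_i\ric^\star\left(\bar{A}^\alpha_i,\bar{A}^\alpha_i\right)$ using the nonnegativity already established, but your direct symmetrization of the upper block via $\left(g^\alpha_i-g^\alpha_j\right)\left(g^\alpha_i/g^\alpha_j-g^\alpha_j/g^\alpha_i\right)\geq 0$ is an equivalent, and arguably cleaner, way to see that the remaining block contributes nonnegatively.
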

	\begin{proof}
		By formula \eqref{eq:Ricci-A formula} we have that
		\begin{align*}
			2\sum_{i=i_0}^{d_\a} \ric^\star \left(A^\alpha_i,A^\alpha_i\right) =& \sum_{\substack{k}}\sum_{\substack{i\geq i_0\\ j < i_0}}\left\langle \left[U_k, \bar{A}^\alpha_i\right],\bar{A}^\alpha_j \right\rangle^2\left(\frac{g^\alpha_i}{g_j^\alpha}-\frac{g_j^\alpha}{g_i^\alpha}\right)\\
			&+ \sum_{\substack{k}}\sum_{\substack{i\geq i_0\\ j \geq i_0}}  \left\langle \left[U_k, \bar{A}^{\a}_i\right],\bar{A}_j^{\a} \right\rangle^2\left(\frac{g_i^\a}{g^\a_{j}}-\frac{g^\a_j}{g^\a_i}\right)\\
			=&\sum_{\substack{k}}\sum_{\substack{i\geq i_0\\ j < i_0}}\left\langle \left[U_k, \bar{A}^\alpha_i\right],\bar{A}^\alpha_j \right\rangle^2\left(\frac{g^\alpha_i}{g_j^\alpha}-\frac{g_j^\alpha}{g_i^\alpha}\right) \geq 0,
		\end{align*}
		where in the second equality we used that the term $\left\langle\left[U_k,A_i^\alpha\right],A_j^\a\right\rangle^2$ is symmetric in the indices $i$ and $j$, as shown in the proof of Proposition \ref{prop:A bounds}.
		
		Analogously,
			\begin{align*}
			2\sum_{i=i_0}^{d_\a} \ric^\star \left(\bar{A}^\alpha_i,\bar{A}^\alpha_i\right) =&\sum_{\substack{k}}\sum_{i\geq i_0}g_i^{\a}\left(\sum_j\left\langle \left[U_k, \bar{A}^\alpha_i\right],\bar{A}^\alpha_j \right\rangle^2\left(\frac{g^\alpha_i}{g_j^\alpha}-\frac{g_j^\alpha}{g_i^\alpha}\right)\right)\\
			=& \sum_{\substack{k}}\sum_{\substack{i\geq i_0\\ j < i_0}}\left\langle \left[U_k, \bar{A}^\alpha_i\right],\bar{A}^\alpha_j \right\rangle^2\left(\frac{\left(g_i^{\a}\right)^2}{g_j^\a}-g_j^\a\right)\\
			&+ \sum_{\substack{k}}\sum_{\substack{i> j\geq i_0}}  \left\langle \left[U_k, \bar{A}^{\a}_i\right],\bar{A}_j^{\a} \right\rangle^2\left(\frac{\left(g_i^{\a}\right)^2}{g_j^\a}-g_j^\a+\frac{(g^\a_j)^2}{g^\a_i}-g^\a_i\right)\\
			=& \sum_{\substack{k}}\sum_{\substack{i\geq i_0\\ j < i_0}}\left\langle \left[U_k, \bar{A}^\alpha_i\right],\bar{A}^\alpha_j \right\rangle^2\left(\frac{\left(g_i^{\a}\right)^2}{g_j^\a}-g_j^\a\right) \\
			&+ \sum_{\substack{k}}\sum_{\substack{i> j\geq i_0}}  \left\langle \left[U_k, \bar{A}^{\a}_i\right],\bar{A}_j^{\a} \right\rangle^2\frac{1}{g_i^{\a}g_j^{\a}}\left((g_i^{\a})^3 - g_i^\a(g_j^{\a})^2+(g_j^{\a})^3-g_j^\a(g_i^{\a})^2\right)\\
			\geq& \sum_{\substack{k}}\sum_{\substack{i\geq i_0\\ j < i_0}}\left\langle \left[U_k, \bar{A}^\alpha_i\right],\bar{A}^\alpha_j \right\rangle^2\left(\frac{\left(g_i^{\a}\right)^2}{g_j^\a}-g_j^\a\right)  \geq 0,
		\end{align*}
	where in the second to last inequality we used that $(a^3-ab^2+b^3-ba^2)=(a-b)^2(a+b)\geq0$, when $a,b \geq0$.

	\end{proof}
	\begin{remark*}\label{remark:Ricci-A bounds for smallest eigen}
		Notice that the nonnegativity of $\ric^\star \left(\bar{A}^\alpha_{d_\alpha},\bar{A}^\alpha_{d_\alpha}\right)$ is not enough to apply an argument analogous to the one in the proof of \cite[Theorem 3.2]{boe}. This is so because this nonnegative term does not come from the Killing form and thus it can go to zero too quickly along the flow, indeed this term can be zero.
		
		Moreover, note that we could do an analogous computation, as the one done to obtain the estimate \eqref{ineq:Ricci-A is pos}, in order to show that the sum of the unimodular Ricci curvatures starting from the lowest eigenvalues is nonpositive. Furthermore, for the smallest eigenvalue $g^\a_1$ with eigenvector $\bar{A}^\a_1$, we have
		\[\ric^*\left(\bar{A}^\a_1,\bar{A}^\a_1\right)\leq0.\]
	\end{remark*}
	\begin{remark*}
		Observe that the computation in the proof of Corollary \ref{cor:Ricci-A and Ricci A-bar} yields 
		\begin{equation*}
			\tr \left(\left. \Ric\right|_{V^\a}\right)=\sum_{i=1} ^{d_\a} \ric ^\star \left(A_i^\alpha,A_i^\alpha\right) =0.
		\end{equation*}
	And it is easy to see that for a general $U\ltimes V$-invariant metric $g$ and a $g$-orthonormal basis $\{A_i\}$ for $V$, the computation above yields
		\[	\tr \left(\left. \Ric\right|_{V}\right)= \frac{1}{4}\sum_{i,j,k}g\left(\left[Y_j,Y_k\right]_\mm,A_i\right)^2\geq 0,\]
		where $\{Y_j\}$ is a $g$-orthonormal basis for $V^{\perp_g}$.
		This was observed by Dotti, Leite, Miatello in \cite{dlm}, where they showed that if a unimodular Lie group admits a left-invariant Ricci negative metric, then it must be semisimple. 
		Observe that $\sum_i\ric^\star(A_i,A_i)=0$ if and only if the horizontal distribution $V^{\perp_g}$ is integrable, which is precisely the case we investigate here.
	\end{remark*}
	\begin{proposition} \label{prop:Ricci-U}
		Let $(M,g)$ be a stable $U\ltimes_\theta V$-homogeneous manifold with a $\theta$-adapted standard metric $g$ and let $\gg=\hh\oplus\mm_\uu \bigoplus_{\a \in \mathcal{I}^*} V^{\a}$ be a reductive decomposition adapted to $g$. 
		Then for $Y \in \mm_\uu$,

		\begin{equation}\label{eq:Ricci-U formula}
			\ric^\star(Y,Y) =\ric_{U/H}(Y,Y)- \frac{1}{2}\left(\sum_{\alpha,i}\Vert\left[Y,A^\alpha_i\right]\Vert_g^2+\tr\left(\left.\ad^2 Y\right\vert_{V}\right)\right),
		\end{equation}
		where  $\{A^\alpha_i\}_{i=1}^{d_\a} \subset V^\a$ is a $\left.g\right\vert_{V^\a\times V^\a}$-orthonormal frame and $\ric_{U/H}$ is the Ricci tensor for the homogeneous submanifold $(U/H, \left.g\right\vert_{U/H}) \subset (G/H, g)$.
	\end{proposition}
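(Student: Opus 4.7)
The plan is to expand $\ric^\star(Y,Y) = -\tfrac{1}{2}\Bk_\gg(Y,Y) + \mo_g(Y,Y)$ from \eqref{eq:uni Ricci formula} using the adapted $g$-orthonormal basis $\{X_i\} = \{U_k\}\cup\{A^\a_i\}$ of $\mm = \mm_\uu \oplus V$ of Notation \ref{not:orth frame}, and then to identify which terms are precisely the intrinsic Ricci curvature of the submanifold $(U/H,\left.g\right|_{U/H})$, with the remaining terms isolating the $V$-contribution claimed in \eqref{eq:Ricci-U formula}.

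First I would analyse the $\mo_g$ summands for $Y\in\mm_\uu$. Splitting the index $i$ into $\mm_\uu$ and $V$, I use that $V$ is an abelian ideal and that $\mm_\uu\subset\uu$ is a subalgebra with $[\mm_\uu,\mm_\uu]\subset\uu$ to see that $[Y,U_k]_\mm = [Y,U_k]_{\mm_\uu}$ and $[Y,A^\a_i]_\mm = [Y,A^\a_i]\in V$, giving
\[\mo_g(Y,Y)=-\tfrac{1}{2}\!\sum_k \|[Y,U_k]_{\mm_\uu}\|_g^2 -\tfrac{1}{2}\!\sum_{\a,i}\|[Y,A^\a_i]\|_g^2 + \tfrac{1}{4}\!\sum_{i,j}g([X_i,X_j]_\mm,Y)^2.\]
For the last sum, the pair $(i,j)$ with both in $V$ contributes zero by $[V,V]=0$, while a mixed pair yields $[U_k,A^\a_i]\in V\perp_g \mm_\uu \ni Y$ by the $\theta$-adaptedness of $g$, so it also contributes zero. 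Hence only pairs from $\mm_\uu$ survive, giving exactly $\tfrac{1}{4}\sum_{k,l} g([U_k,U_l]_{\mm_\uu},Y)^2$.

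Next I would split the Killing form. Since $\gg = \uu \oplus V$ as $\ad Y$-invariant subspaces (because $Y\in\uu$ and $V$ is an ideal), one has
\[\Bk_\gg(Y,Y)=\tr(\ad_\gg^2 Y)=\tr(\ad_\uu^2 Y)+\tr(\left.\ad^2 Y\right|_V)=\Bk_\uu(Y,Y)+\tr(\left.\ad^2 Y\right|_V).\]
Finally, because $\uu$ is the Lie algebra of a compact group (so $\uu=\kk\oplus\hat{\zz}$ with $\kk$ semisimple and $\hat{\zz}$ central), $\tr\ad_\uu Y = 0$, hence the mean curvature vector of $(U/H,\left.g\right|_{U/H})$ vanishes and $\ric^\star_{U/H}=\ric_{U/H}$. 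Comparing with \eqref{eq:uni Ricci formula} applied on $U/H$ with the frame $\{U_k\}$ of $\mm_\uu$, one reads off
\[\ric_{U/H}(Y,Y)=-\tfrac{1}{2}\Bk_\uu(Y,Y)-\tfrac{1}{2}\!\sum_k\|[Y,U_k]_{\mm_\uu}\|_g^2+\tfrac{1}{4}\!\sum_{k,l}g([U_k,U_l]_{\mm_\uu},Y)^2.\]
Subtracting this from $\ric^\star(Y,Y)$ the intrinsic part cancels and the $-\tfrac{1}{2}\Bk_\uu(Y,Y)$ contributions also cancel, leaving precisely $-\tfrac{1}{2}\bigl(\sum_{\a,i}\|[Y,A^\a_i]\|_g^2+\tr(\left.\ad^2 Y\right|_V)\bigr)$, which is the desired identity.

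There is no real obstacle here; the only bookkeeping one has to be careful with is (i) the projection $[\cdot,\cdot]_\mm$ in the third $\mo_g$ term, where I rely on $\mm_\uu \perp_g V$ to discard mixed-index contributions, and (ii) the invariance of $\uu$ and $V$ under $\ad Y$ to decouple $\Bk_\gg$ into $\Bk_\uu$ plus the $V$-trace, which is what produces the extra $\tr(\left.\ad^2 Y\right|_V)$ correction relative to the intrinsic $U/H$-formula.
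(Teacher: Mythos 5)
Your argument is correct and follows essentially the same route as the paper: expand $\ric^\star(Y,Y)=-\tfrac12\Bk_\gg(Y,Y)+\mo_g(Y,Y)$ in the adapted orthonormal frame, discard the mixed and $V\times V$ contributions in $\mo_g$ via $[V,V]=0$ and $\mm_\uu\perp_g V$, split $\Bk_\gg=\Bk_\uu+\tr(\left.\ad^2 Y\right|_V)$, and identify the surviving $\uu$-part with $\ric_{U/H}$. The only tiny slip is the parenthetical ``$\uu=\kk\oplus\hat{\zz}$ with $\kk$ semisimple'': in the paper's notation $\kk$ also contains the part of $\zz(\uu)$ killed by all weights $\a$, so it need not be semisimple; what you actually use (that $\uu$ is compact, hence unimodular, so $\tr\ad_\uu Y=0$ and $\Hm_{U/H}=0$) is correct and is a slightly more explicit justification of the step the paper takes silently.
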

	
	\begin{proof}
		Using the unimodular Ricci tensor formula \eqref{eq:uni Ricci formula} with a $g$-diagonalizing basis as in Notation \ref{not:orth frame}, and denoting the Killing form of $\uu$ by $\Bk_\uu$, we get that for $Y \in \mm_\uu$
		\begin{align*}
			\ric^\star(Y,Y) =& -\frac{\Bk(Y,Y)}{2}-\frac{1}{2}\sum_k\Vert\left[Y,U_k\right]_\mm\Vert_g^2-\frac{1}{2}\sum_{\alpha,i} \Vert\left[Y,A_i^\alpha\right]\Vert_g^2+\frac{1}{4}\sum_{j,k}g\left(\left[U_j,U_k\right]_\mm,Y\right)^2 \\
			=&-\frac{\Bk_\uu(Y,Y)}{2}-\frac{1}{2}\sum_k\Vert\left[Y,U_k\right]_\mm\Vert_g^2+\frac{1}{4}\sum_{j,k}g\left(\left[U_j,U_k\right]_\mm,Y\right)^2 \\
			&- \frac{1}{2}\left(\sum_{\alpha,i}\Vert\left[Y,A^\alpha_i\right]\Vert_g^2+\tr\left(\left.\ad^2 Y\right\vert_{V}\right)\right)\\
			=&\ \ric_{U/H}(Y,Y)- \frac{1}{2}\left(\sum_{\alpha,i}\Vert\left[Y,A^\alpha_i\right]\Vert_g^2+\tr\left(\left.\ad^2 Y\right\vert_{V}\right)\right).
		\end{align*}
	\end{proof}
	
	\begin{remark*}\label{remark:Ricci_U in terms of intrinsic and V-action}
		The formula \eqref{eq:Ricci-U formula} for the unimodular Ricci tensor in the $U/H$ direction has the following algebraic interpretation
		\[\ric_g^\star (Y,Y)= \ric_{U/H}(Y,Y)-\Vert \sum_\a S^g\left(\left.\theta( Y)\right|_{V^\a}\right)\Vert_g^2.\]
		Therefore, the unimodular Ricci tensor in the $U/H$ direction approximates the Ricci tensor of $U/H$ with the induced metric precisely when the background metric on the $\Ad(U)$-representations $V^\a$ approaches an $\Ad(U)$-invariant one. Of course, this cannot be achieved in general when $\hat{\zz}\neq0$. However, for the compact representation of $\Ad(K)$ on $V^\a$, this is possible, and indeed we show in the following section a result in this direction. Namely, $\Vert S^g\left(\left.\theta( L_t)\right|_{V^\a}\right)\Vert_{g(t)}^2$ is integrable along a unimodular Ricci flow $g(t)$ starting at a $\theta$-adapted standard metric, for any $g(t)$-unitary vector $L_t \in \ll \subset \kk$.
	\end{remark*}
	
	In the compact direction $K/H$ the formula \eqref{eq:Ricci-U formula} has an even better expression.
	
	\begin{corollary} \label{prop:L bound}
		Let $(M,g)$ be a stable $U\ltimes_\theta V$-homogeneous manifold with a $\theta$-adapted standard metric $g$ and let $\gg=\hh\oplus\mm_\uu \bigoplus_{\a \in \mathcal{I}^*} V^{\a}$ be a reductive decomposition adapted to $g$, where $\mm_\uu=\ll\oplus\hat{\zz}$.
		Then for a $\left.g\right\vert_{V^\a\times V^\a}$-diagonalizing $\langle\cdot,\cdot \rangle$-orthonormal frame, $\{\bar{A}^\alpha_i\}_{i=1}^{d_\a} \subset V^\a$, as in Notation \ref{not:orth frame}, we have that for $L \in \ll \subset \kk$,
		\begin{equation}\label{eq:Ricci-L formula}
			\ric^\star(L,L) = \ric_{U/H}(L,L)-\frac{1}{4}\sum_{\alpha,i,j}\left\langle \left[L,\bar{A}^\a_i\right],\bar{A}^\alpha_j \right\rangle^2\left(\frac{g^\alpha_i}{g^\a_j}+\frac{g^\a_j}{g^\alpha_i}-2\right).
		\end{equation}
	\end{corollary}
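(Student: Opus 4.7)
The plan is to start from the formula derived in Proposition \ref{prop:Ricci-U}, namely
\[\ric^\star(L,L) = \ric_{U/H}(L,L) - \frac{1}{2}\left(\sum_{\alpha,i}\Vert[L,A^\alpha_i]\Vert_g^2 + \tr\left(\left.\ad^2 L\right\vert_{V}\right)\right),\]
and re-express both $V$-dependent terms on the right-hand side in terms of the matrix entries $\langle[L,\bar{A}^\alpha_i],\bar{A}^\alpha_j\rangle$ and the eigenvalues $g^\alpha_i$ of $P$ on $V^\alpha$.

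The key observation is that, because $\kk$ is compactly embedded in $\gg$ and $L \in \ll\subset\kk$, the restriction $\ad L|_{V^\alpha}$ is skew-symmetric with respect to the background inner product $\langle\cdot,\cdot\rangle$; equivalently, in the decomposition from the remark on the root-space decomposition we have $\alpha(L)=0$ for every weight, so $\ad L|_{V^\alpha}=\J^\alpha_L$ is purely skew. First I would use this to compute $\tr(\ad^2 L|_V)=\sum_\alpha\tr((\J^\alpha_L)^2)$ in the $\langle\cdot,\cdot\rangle$-orthonormal frame $\{\bar{A}^\alpha_i\}$: skew-symmetry gives $\tr((\J^\alpha_L)^2)=-\sum_{i,j}\langle[L,\bar{A}^\alpha_i],\bar{A}^\alpha_j\rangle^2$, hence
\[\tr\left(\left.\ad^2 L\right\vert_{V}\right) = -\sum_{\alpha,i,j}\langle[L,\bar{A}^\alpha_i],\bar{A}^\alpha_j\rangle^2.\]
Next I would expand $\Vert[L,A^\alpha_i]\Vert_g^2$ using $A^\alpha_i=\bar{A}^\alpha_i/\sqrt{g^\alpha_i}$ together with $P\bar{A}^\alpha_j=g^\alpha_j\bar{A}^\alpha_j$: since $[L,V^\alpha]\subset V^\alpha$ and $g(\cdot,\cdot)=\langle P\cdot,\cdot\rangle$, one has $g([L,A^\alpha_i],A^\alpha_j)=\sqrt{g^\alpha_j/g^\alpha_i}\,\langle[L,\bar{A}^\alpha_i],\bar{A}^\alpha_j\rangle$, so
\[\sum_{\alpha,i}\Vert[L,A^\alpha_i]\Vert_g^2 = \sum_{\alpha,i,j}\frac{g^\alpha_j}{g^\alpha_i}\langle[L,\bar{A}^\alpha_i],\bar{A}^\alpha_j\rangle^2.\]

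Substituting both expressions into the formula from Proposition \ref{prop:Ricci-U} yields
\[\ric^\star(L,L)-\ric_{U/H}(L,L) = -\frac{1}{2}\sum_{\alpha,i,j}\langle[L,\bar{A}^\alpha_i],\bar{A}^\alpha_j\rangle^2\left(\frac{g^\alpha_j}{g^\alpha_i}-1\right).\]
To reach the form in the statement I would finally symmetrize in $(i,j)$: skew-symmetry of $\ad L|_{V^\alpha}$ implies $\langle[L,\bar{A}^\alpha_i],\bar{A}^\alpha_j\rangle^2=\langle[L,\bar{A}^\alpha_j],\bar{A}^\alpha_i\rangle^2$, so averaging the above sum with the one obtained by swapping $i\leftrightarrow j$ produces the factor $\tfrac12(g^\alpha_i/g^\alpha_j+g^\alpha_j/g^\alpha_i-2)$, and the constant in front of the sum becomes $-\tfrac14$, exactly matching \eqref{eq:Ricci-L formula}.

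The main obstacle here is essentially bookkeeping: correctly identifying that $L\in\ll\subset\kk$ forces $\ad L|_V$ to be $\langle\cdot,\cdot\rangle$-skew (so that the weight contribution $\alpha(L)\Id$ drops out and both the trace simplification and the final symmetrization become legal). Once this is granted, the proof is a direct algebraic manipulation of the formula supplied by Proposition \ref{prop:Ricci-U} using the $g$-diagonalizing frame of Notation \ref{not:orth frame}.
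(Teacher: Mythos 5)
Your proposal is correct and follows essentially the same route as the paper's proof: start from Proposition~\ref{prop:Ricci-U}, use the $\langle\cdot,\cdot\rangle$-skew-symmetry of $\left.\ad L\right|_{V^\alpha}$ for $L\in\ll\subset\kk$ to rewrite both $\tr(\ad^2 L|_V)$ and $\sum\|[L,A^\alpha_i]\|_g^2$ in terms of $\langle[L,\bar{A}^\alpha_i],\bar{A}^\alpha_j\rangle^2$ and eigenvalue ratios, then symmetrize in $(i,j)$. The intermediate identities you derive (including $g([L,A^\alpha_i],A^\alpha_j)=\sqrt{g^\alpha_j/g^\alpha_i}\,\langle[L,\bar{A}^\alpha_i],\bar{A}^\alpha_j\rangle$ and $\tr(\ad^2L|_V)=-\sum_{\alpha,i,j}\langle[L,\bar{A}^\alpha_i],\bar{A}^\alpha_j\rangle^2$) match the paper's calculation exactly.
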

	
	\begin{proof}
		This is an immediate consequence of the formula in Proposition \ref{prop:Ricci-U}, since we chose our background metric $\left. \langle \cdot,\cdot \rangle\right|_{V^\a\times V^\a}$ to be $\Ad(K)$-invariant. This implies that for any $L \in \ll$, $\left.\ad L\right|_{V^\a}$ is skew-symmetric with respect to $\left. \langle \cdot,\cdot \rangle\right|_{V^\a\times V^\a}$. Thus,
		\begin{align*}
			\ric^\star(L,L) 
			&=\ric_{U/H}(L,L)- \frac{1}{2}\left(\sum_{\alpha,i}\Vert\left[L,A^\alpha_i\right]\Vert_g^2+\tr\left(\left.\ad^2 L\right\vert_{V}\right)\right)\\
			&=\ric_{U/H}(L,L)- \frac{1}{2}\left(\sum_{\alpha,i,j}g\left(\left[L,A^\alpha_i\right],A^\a_j\right)^2+\sum_{\alpha,i}\left\langle \left[L,\left[L,\bar{A}^\alpha_i\right]\right],\bar{A}^\alpha_i\right\rangle\right) \\
			&=\ric_{U/H}(L,L)- \frac{1}{2}\left(\sum_{\alpha,i,j}\frac{g_j^\a}{g_i^\alpha}\left\langle\left[L,\bar{A}^\alpha_i\right],\bar{A}^\a_j\right\rangle^2-\sum_{\alpha,i}\left\langle \left[L,\bar{A}^\alpha_i\right],\left[L,\bar{A}^\alpha_i\right]\right\rangle\right) \\
			&=\ric_{U/H}(L,L)- \frac{1}{2}\left(\sum_{\alpha,i,j}\frac{g_j^\a}{g_i^\alpha}\left\langle\left[L,\bar{A}^\alpha_i\right],\bar{A}^\a_j\right\rangle^2-\sum_{\alpha,i,j}\left\langle \left[L,\bar{A}^\alpha_i\right],\bar{A}^\a_j\right\rangle^2\right) \\
			&=\ric_{U/H}(L,L)- \frac{1}{4}\sum_{\alpha,i,j}\left\langle \left[L,\bar{A}^\alpha_i\right],\bar{A}^\a_j \right\rangle^2 \left(\frac{g^\alpha_i}{g^\a_j}+\frac{g_j^\a}{g_i^\alpha}-2 \right). 
		\end{align*}
	\end{proof}
	
	Since $\hat{\zz} \subset \zz(\uu)$, we can use the estimates for $\ric_{U/H}$ in the direction for the largest eigenvalue of $\left.g\right|_{\ll\times \ll}$ obtained in the proof of \cite[Theorem 3.1]{boe} to get the following corollary.
	\begin{corollary}\label{cor:Ricci^star on Lm is bounded}
		Let $(M,g)$ be a stable $U\ltimes_\theta V$-homogeneous manifold with a $\theta$-adapted standard metric $g$ and let $\gg=\hh\oplus\mm_\uu \bigoplus_{\a \in \mathcal{I}^*} V^{\a}$ be a reductive decomposition adapted to $g$, where $\mm_\uu=\ll\oplus\hat{\zz}$.
		Let $\{\bar{A}^\alpha_i\}_{i=1}^{d_\a} \subset V^\a$ be a $\left.g\right\vert_{V^\a\times V^\a}$-diagonalizing $\langle\cdot,\cdot \rangle$-orthonormal frame, as in Notation \ref{not:orth frame}.
		If $L_m \in \ll\subset \kk$ is an eigenvector corresponding to the largest eigenvalue $g^\ll_m$ of $\left.g\right|_{\ll\times \ll}$. Then we have the following bound for the unimodular Ricci curvature
		\begin{align*}
			\ric^\star(L_m,L_m) &\geq -\frac{\Bk_\kk\left(L_m,L_m\right)}{4}- \frac{1}{4}\sum_{\alpha,i,j}\left\langle \left[L_m,\bar{A}^\alpha_i\right],\bar{A}^\a_j \right\rangle^2 \left(\frac{g^\alpha_i}{g^\a_j}+\frac{g_j^\a}{g_i^\alpha}-2 \right)\\
			&\geq - \frac{1}{4}\sum_{\alpha,i,j}\left\langle \left[L_m,\bar{A}^\alpha_i\right],\bar{A}^\a_j \right\rangle^2 \left(\frac{g^\alpha_i}{g^\a_j}+\frac{g_j^\a}{g_i^\alpha}-2 \right),
		\end{align*}
		where $\Bk_\kk$ is the Killing form of $\kk$, which is negative semidefinite.
	\end{corollary}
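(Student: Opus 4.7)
The plan is to combine Corollary \ref{prop:L bound} with Böhm's intrinsic estimate for the compact factor and then discard a nonnegative term. Applying Corollary \ref{prop:L bound} directly gives
\[
\ric^\star(L_m,L_m) = \ric_{U/H}(L_m,L_m) - \frac{1}{4}\sum_{\alpha,i,j}\left\langle [L_m,\bar{A}^\alpha_i],\bar{A}^\alpha_j\right\rangle^2\left(\frac{g^\alpha_i}{g^\alpha_j}+\frac{g^\alpha_j}{g^\alpha_i}-2\right),
\]
so the task reduces to lower-bounding $\ric_{U/H}(L_m,L_m)$ by $-\tfrac{1}{4}\Bk_\kk(L_m,L_m)$.

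The first step is to show that $\ric_{U/H}(L_m,L_m)=\ric_{K/H}(L_m,L_m)$. Since $\hat{\zz}\subset \zz(\uu)$ is central and $L_m\in\ll\subset\kk$, every bracket $[L_m,Z]$ with $Z\in\hat{\zz}$ vanishes, so in the Ricci formula \eqref{eq:ricci XY formula} applied to $U/H$ only the contributions from a $g$-orthonormal frame of $\mm_{U/H}\cap\kk$ (i.e.\ of $\ll$) survive. The same centrality forces $\Bk_\uu(L_m,L_m)=\Bk_\kk(L_m,L_m)$, and because $K/H$ is unimodular while $\hat{Z}$ is abelian, the mean curvature vector of $U/H$ sits in $\hat{\zz}$ and thus commutes with $L_m$, killing the $\h_g$-contribution as well. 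This identifies $\ric_{U/H}(L_m,L_m)$ with the purely compact intrinsic Ricci curvature $\ric_{K/H}(L_m,L_m)$.

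The second step is to invoke the algebraic estimate in the proof of \cite[Theorem 3.1]{boe}: for a compact homogeneous space and the direction $L_m$ realizing the largest eigenvalue of the restricted metric on $\ll$, one has
\[
\ric_{K/H}(L_m,L_m) \;\geq\; -\frac{1}{4}\Bk_\kk(L_m,L_m).
\]
This is where the choice of the largest-eigenvalue direction is essential, since it forces the negative terms $-\tfrac{1}{2}\sum g([L_m,L_i]_\mm,L_j)^2$ to be absorbed by the structure-constant-square terms $\tfrac{1}{4}\sum g([L_i,L_j]_\mm,L_m)^2$ up to a multiple of the Killing form. Substituting this bound into the previous display yields the first inequality of the corollary.

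For the second inequality, I would finish by noting that $\kk$ is compact so $\Bk_\kk$ is negative semidefinite, hence $-\tfrac{1}{4}\Bk_\kk(L_m,L_m)\geq 0$ and can simply be dropped. No real obstacle arises: the reduction to $K/H$ is mechanical once one uses centrality of $\hat{\zz}$ in $\uu$, and the core inequality is cited verbatim from \cite{boe}; the only subtlety to double-check is that the formula from Corollary \ref{prop:L bound} for $\ric^\star$ on $\ll$ really matches, up to the compact-factor reduction, the expression one obtains from $\ric_{K/H}$ plus the off-diagonal $V$-bracket terms, which is already built into Proposition \ref{prop:Ricci-U}.
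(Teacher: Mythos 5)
The overall structure of your proof matches the paper's implicit argument: use Corollary~\ref{prop:L bound} to isolate the error term, invoke the largest-eigenvalue estimate from the proof of \cite[Theorem 3.1]{boe}, and then discard the nonnegative Killing-form contribution. However, your intermediate claim that $\ric_{U/H}(L_m,L_m)=\ric_{K/H}(L_m,L_m)$ is false in general, and the reasoning given for it does not hold.

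The problem is the assertion that \emph{only the contributions from a $g$-orthonormal frame of $\ll$ survive} in the Ricci formula for $U/H$. As the paper points out in Notation~\ref{not:orth frame}, $\ll$ is in general \emph{not} $g$-orthogonal to $\hat{\zz}$, even though the two are $\langle\cdot,\cdot\rangle$-orthogonal. Hence a $g$-orthonormal frame of $\mm_\uu=\ll\oplus\hat{\zz}$ does not split into a $g\vert_{\ll}$-orthonormal frame of $\ll$ together with a frame of $\hat{\zz}$: the vectors complementary to $\hat{\zz}$ lie in $\hat{\zz}^{\perp_g}$, and their $\ll$-components under the background projection $\pr_{\ll}$ are not $g\vert_{\ll}$-orthonormal. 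Although every bracket that appears in the $\M_g$-terms indeed depends only on $\ll$-components (since $\hat{\zz}$ is central), the trace over a $g$-orthonormal frame of $\mm_\uu$ does not reduce to the trace over a $g\vert_{\ll}$-orthonormal frame of $\ll$. A concrete counterexample: take $\uu=\so(3)\oplus\RR$, $\hh=0$, $\ll=\so(3)=\mathrm{span}(e_1,e_2,e_3)$, $\hat{\zz}=\mathrm{span}(e_4)$, and an inner product $g$ with $g\vert_{\ll\times\ll}=\mathrm{diag}(1,1,2)$, $g(e_4,e_4)=1$, and $g(e_1,e_4)=b\neq 0$ (all other cross terms zero). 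Then $L_m=e_3$, and a direct computation gives
\[
\ric_{U/H}(e_3,e_3)=2+\frac{3b^2}{2(1-b^2)}\neq 2=\ric_{K/H}(e_3,e_3).
\]
So the equality you propose as the first step fails; the off-diagonal $\ll$--$\hat{\zz}$ terms of the metric genuinely change $\ric_{U/H}$.

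What does go through, and is what the text preceding the corollary is pointing to, is that Böhm's computation from \cite[Theorem 3.1]{boe} can be run directly in $U/H$ rather than $K/H$. The centrality of $\hat{\zz}\subset\zz(\uu)$ forces $\Bk_\uu(L_m,L_m)=\Bk_\kk(L_m,L_m)$, makes the mean curvature of $U/H$ vanish (since $\uu$ is compact), and ensures that all brackets in the $\M_g$-term are determined by $\ll$-components; these features allow the same comparison of positive and negative structure-constant terms at the largest eigenvalue $g^\ll_m$ of $g\vert_{\ll\times\ll}$, yielding $\ric_{U/H}(L_m,L_m)\geq -\tfrac14\Bk_\kk(L_m,L_m)$. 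The correct fix is therefore not to claim $\ric_{U/H}=\ric_{K/H}$, but to verify that the inequality in \cite{boe} holds for the larger reductive space $U/H$ despite the possible $g$-non-orthogonality of $\ll$ and $\hat{\zz}$ — in particular, one should not assume the $g$-orthonormal frame splits along the background decomposition $\ll\oplus\hat{\zz}$.
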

	These special algebraic estimates for the unimodular Ricci tensor follow from the algebraic and geometric compatibility of $\theta$-adapted standard metrics. They serve as the starting point for the analysis of the long-time behavior of unimodular Ricci flows along $\theta$-adapted standard metrics.
	
	\section{The Dynamical Alekseevskii Conjecture on $\theta$-adapted standard metrics}\label{section:main result}
	
	\subsection{On the regularity of the eigenvalues of homogeneous Ricci flows}
	
	Let us quickly discuss the regularity of eigenvalues of a (unimodular) homogeneous Ricci flow solution $g(t)$. We need that in order to use ODE comparison arguments in what follows. 
	
	Notice that if $\phi \colon (a,b) \times \mathbb{R}^k \to \mathbb{R}$ is a $C^1$ map, then by \cite[Lemma B.40]{chow} it follows that 
	\begin{equation}\label{eq:extremal regularity}
		\frac{d^-}{dt}\max_{x\in C} \phi_t(x) =\min_{x\in C_t}\frac{\partial \phi_t}{\partial t}(x),
	\end{equation}
	where $C$ is a compact subset of $\mathbb{R} ^k$ and $C_t \coloneqq \{x_t \in C \ \vert \ \phi_t\left(x_t\right)= \max_{x\in C} \phi_t(x)\}$, and $\frac{d^-}{dt}$ is the left-hand derivative. We can apply this to the homogeneous (unimodular) Ricci flow solution $g(t)$ in order to get control on the maximum eigenvalue of $g(t)$ with respect to a fixed background metric. 
	\begin{remark*}\label{remark:this implies lipschitz}
		We have analogous results as to equation \eqref{eq:extremal regularity} by replacing the maximum for the minimum or replacing the left-hand derivative for the right-hand derivative in equation \eqref{eq:extremal regularity}. By combining these results we can conclude for example that the extremal eigenvalues of $g(t)$ are locally Lipschitz and use comparison arguments on them (see e.g. inequalities \eqref{eq:max eigen is decreasing} and \eqref{eq:min eigen is increasing}).
	\end{remark*}

	We can also adapt this property for other smooth functions defined in terms of the metric $g(t)$ in order to get regularity information on more eigenvalues (see definition \eqref{def:eigenvalues as max} on the next subsection).
	
	In our case, however, even more is true, since the (unimodular) homogeneous Ricci flow is analytic. This follows from the Cauchy-Kovalevskaya theorem (see \cite[Chapter 1, Section D]{fol}) and the observation that the homogeneous Ricci flow is an equation given by rational functions. In this situation, by Rellich's theorem \cite[Chapter 1, $\S$ 1, Theorem 1]{rel}, there exists a choice of analytic eigenvalues with a corresponding analytic orthonormal frame with respect to the fixed background metric. In particular, this implies that any continuous choice of eigenvalues is, up to a finite set of singularities, automatically analytic on a compact interval. This is a remarkable property, but we do not require that much in this paper; for our purposes, the Lipschitz condition is sufficient to apply the fundamental theorem of calculus (see Lemma \ref{lemma:Ricci-A is integrable}).
		\begin{remark*}
		The real analyticity assumption is indeed very strong. Just assuming smoothness of a family $A(t)$ generally leads to a drastic loss of regularity for the eigenvalues. For instance, there are smooth families $A(t)$ that do not admit a $C^2$-choice of eigenvalues (see Example in \cite{km}), and the orthonormal eigenvectors may not even admit a continuous choice (see \cite[Example 2.7]{pr}). For a more comprehensive and up-to-date survey on the interesting topic of the regularity of roots of polynomials, we refer to \cite{pr}.
	\end{remark*}
	
	\subsection{Dynamical estimates and the main theorem}\label{subsec:main}
	
	The a priori bounds obtained in Corollary \ref{cor:Ricci-A and Ricci A-bar} already gives us that the pinching quotient $\frac{g^\a_{d_\a}}{g^\a_{1}}(t)$ for the unimodular Ricci flow solution $g(t)$ restricted to $V^\a$ is nonincreasing for all $ \a \in \mathcal{I}^*$, where we order the eigenvalues as $g_1^\a \leq \ldots \leq g_{d_\a}^\a$. Indeed, using \cite[Lemma B.40]{chow},
	we get that for the maximum eigenvalue $g^\a_{d_\a}$,
	\begin{equation}\label{eq:max eigen is decreasing}
		\frac{d^-g_{d_\a}^\a}{dt}\leq -2\ric^\star\left(\bar{A}_{d_\a}^\a,\bar{A}_{d_\a}^\a\right) = -\sum_{\substack{j, k}} \left\langle \left[U_k, \bar{A}^\a_j\right],\bar{A}_{d_\a}^\a \right\rangle^2\left(\frac{\left(g_{d_\a}^\a\right)^2}{g^\a_j}-g^\a_j\right)\leq 0,
	\end{equation}
	where $\frac{d^-}{dt}$ is the left-hand derivative and in the last inequality we used that $g_{d_\a}^\a \geq g^\a_j$, for $j=1,\ldots,d_\a$. And analogously for the smallest eigenvalue (see Remark \ref{remark:Ricci-A bounds for smallest eigen}), we get
	\begin{equation}\label{eq:min eigen is increasing}
		\frac{d^-g_1^\a}{dt}\geq -2\ric^\star\left(\bar{A}_1^\a,\bar{A}_1^\a\right) = -\sum_{j,k} \left\langle \left[U_k, \bar{A}^\a_j\right],\bar{A}_1^\a \right\rangle^2\left(\frac{\left(g_1^\a\right)^2}{g^\a_j}-g_j^\a \right)\geq 0.
	\end{equation}
	
	We can integrate these relations to conclude that $g^\a_{d_\a}$ is nonincreasing and $g^\a_{1}$ is nondecreasing.
		
	As in \cite[Theorem 3.2]{boe}, this combined with Corollary \ref{prop:L bound} implies that if $K/H$ is not a torus, then when the initial metric $\left.g_0\right|_{V^\a\times V^\a}$ on each $V^\a$ is sufficiently pinched, there exits $\epsilon>0$ such that $\ric_{g(t)}^\star\left(L_{m'}(t),L_{m'}(t)\right)\geq \frac{\epsilon}{g^\ll_{m'}(t)}>0$, for all $t \in [0,T)$, where $L_{m'}$ is the eigenvector with largest eigenvalue $g^\ll_{m'}$ in the direction of the semisimple part of $\ll$. Which in turn, implies that the flow blows up in finite time.
	
	These algebraic estimates we have for the unimodular Ricci curvature in the direction of each $V^\a$ are, however, not enough to conclude that $\frac{g^\a_{d_\a}}{g^\a_{1}}(t)$ converges to $1$, as $t \to \infty$ on an immortal flow. 
	Instead, in what follows we obtain integral estimates which will be enough to confirm the dynamical Alekseevskii conjecture on homogeneous Ricci flows along $\theta$-adapted standard metrics.	
	
	In the lemmas that follow we assume, to simplify the notation, that $\mathcal{I}=\{\alpha_1\}$, i.e., we have a single submodule $V = V^{\alpha_1}$ such that $\hat{\zz}$ acts as a multiple of the identity plus a $\langle \cdot, \cdot \rangle$-skew-symmetric map. By the formulas in Proposition \ref{prop:A bounds} and Proposition \ref{prop:Ricci-U}, it is easy to see that the following lemmas are still true for the more general Ricci flow invariant condition $V^{\a_1} \perp_{g(t)} \ldots \perp_{g(t)} V^{\a_p}$. 
	
	For the next lemma, note that we can define the largest eigenvalue $g^V_d$ of the metric $\left.g\right|_{V\times V}$ with respect to the background metric $\langle\cdot,\cdot\rangle$ as $g^V_d\coloneqq \max \{g(X,X) \ | \ X\in V, \ \langle X,X\rangle=1\}$ and since the logarithm is strictly increasing we can define $\log g^V_d\coloneqq \max\log \{g(X,X) \ | \ X\in V, \ \langle X,X\rangle=1\}$. 
	
	Analogously, we can define the sum of the $d-j+1$-largest eigenvalues $\sum_{i=j}^dg^V_i$ as 
	\begin{align}\label{def:eigenvalues as max}
		\sum_{i=j}^dg^V_i\coloneqq \max \{\sum_{i=1}^{d-j+1}g(X_i,X_i) \ | \ &X_1,  \ldots,X_{d-j+1}\in V, \\ \notag
		\langle &X_i,X_k \rangle=0, i\neq k, \ \langle X_i,X_i\rangle=1\}.
	\end{align}
	And thus we can apply equality \eqref{eq:extremal regularity} to this geometric quantities along the unimodular Ricci flow.
	\begin{lemma}\label{lemma:Ricci-A is integrable}
		Let $(M,g(t))$, $t \in [t_0,T)$, be a stable $U\ltimes_\theta V$-homogeneous manifold with a maximal  unimodular Ricci flow solution along $\theta$-adapted standard metrics $g(t)$, and let $\gg=\hh\oplus\mm_\uu \bigoplus_{\a \in \mathcal{I}^*} V^{\a}$ be a reductive decomposition adapted to $g(t)$. 
		Then, for any $i_0 \leq d$, the nonnegative quantity 
		\[\max_{\{\bar{A}_i(t)\}^d_{i=i_0} \in C_{i_0,t}} \left(\sum_{i=i_0} ^{d}\ric_{g(t)} ^\star (A_i(t),A_i(t)) \right)\]
		is bounded from above by an integrable quantity along the flow, where $C_{i_0,t}$ is the set of $\langle\cdot,\cdot\rangle$-orthonormal eigenvectors of $\left.g(t)\right|_{V\times V}$ corresponding to the eigenvalues $g^V_{i_0}(t) \leq \ldots\leq g^V_d(t)$, and $A_i(t) = \frac{\bar{A}_i}{\sqrt{g^V_i(t)}}$, as in Notation \ref{not:orth frame}.
	\end{lemma}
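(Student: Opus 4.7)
The plan is to bound the maximum of the sum $\sum_{i=i_0}^{d}\ric^\star(A_i,A_i)$ from above by (a constant times) the negative of the time derivative of the canonical quantity $S_{i_0}(t) \coloneqq \sum_{i=i_0}^{d}g^V_i(t)$, and then conclude integrability by the fundamental theorem of calculus applied to this monotone, bounded, Lipschitz function. The key inputs are the refined pinching bound from the proof of Corollary \ref{cor:Ricci-A and Ricci A-bar}, the extremal-derivative formula \eqref{eq:extremal regularity}, and the monotonicity of the smallest eigenvalue $g^V_1(t)$ from inequality \eqref{eq:min eigen is increasing}.

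First, I would apply \eqref{eq:extremal regularity} to the function $\phi_t(\{X_i\}) \coloneqq \sum_{i=1}^{d-i_0+1} g(t)(X_i,X_i)$ defined on the compact Stiefel manifold of $\langle\cdot,\cdot\rangle$-orthonormal $(d-i_0+1)$-frames in $V$. By the variational characterization \eqref{def:eigenvalues as max}, $S_{i_0}(t) = \max_{\{X_i\}} \phi_t(\{X_i\})$ and the critical set is exactly $C_{i_0,t}$. Combining \eqref{eq:extremal regularity} with the unimodular Ricci flow equation $\dot g = -2\ric^\star_g$ yields
\begin{equation*}
\frac{d^-}{dt} S_{i_0}(t) \;=\; \min_{\{\bar A_i\}\in C_{i_0,t}}\sum_{i=i_0}^{d}\dot g(\bar A_i,\bar A_i) \;=\; -2\max_{\{\bar A_i\}\in C_{i_0,t}}\sum_{i=i_0}^{d}\ric^\star(\bar A_i,\bar A_i),
\end{equation*}
where the sign flips because $\min(-2f) = -2\max f$.

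Next, the estimate obtained in the proof of Corollary \ref{cor:Ricci-A and Ricci A-bar} gives, for every admissible frame in $C_{i_0,t}$,
\begin{equation*}
2\sum_{i=i_0}^{d}\ric^\star(A_i,A_i) \;\le\; \frac{1}{g^V_1(t)}\sum_{i=i_0}^{d}\ric^\star(\bar A_i,\bar A_i).
\end{equation*}
Taking the maximum over $C_{i_0,t}$ on both sides and substituting the previous displayed identity,
\begin{equation*}
\max_{\{\bar A_i\}\in C_{i_0,t}}\sum_{i=i_0}^{d}\ric^\star(A_i,A_i) \;\le\; -\frac{1}{4\,g^V_1(t)}\frac{d^-}{dt}S_{i_0}(t).
\end{equation*}
By inequality \eqref{eq:min eigen is increasing} we have $g^V_1(t)\ge g^V_1(0)>0$, so this right-hand side is controlled by $-\frac{1}{4g^V_1(0)}\frac{d^-}{dt}S_{i_0}(t)$.

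Finally, since $S_{i_0}(t)$ is locally Lipschitz (as noted in the discussion preceding the lemma) and nonincreasing (its left-hand derivative is nonpositive by Corollary \ref{cor:Ricci-A and Ricci A-bar}), the fundamental theorem of calculus gives, for every $T$ in the maximal existence interval,
\begin{equation*}
\int_{0}^{T}\max_{\{\bar A_i(t)\}\in C_{i_0,t}}\sum_{i=i_0}^{d}\ric^\star\bigl(A_i(t),A_i(t)\bigr)\,dt \;\le\; \frac{1}{4\,g^V_1(0)}\bigl(S_{i_0}(0) - S_{i_0}(T)\bigr) \;\le\; \frac{S_{i_0}(0)}{4\,g^V_1(0)},
\end{equation*}
a bound that is uniform in $T$, proving integrability. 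The only mildly delicate point is the regularity step: one must justify that the left-hand derivative formula applies despite the potential nonsmoothness of the individual eigenvalues at crossings, which is why we work with the sum $S_{i_0}(t)$ (manifestly Lipschitz via \eqref{eq:extremal regularity}) rather than with each $g^V_i(t)$ separately, and take the maximum over $C_{i_0,t}$ instead of fixing an a priori continuous choice of eigenvectors.
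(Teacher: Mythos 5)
Your proposal is correct and follows essentially the same route as the paper's proof: apply the extremal-derivative formula \eqref{eq:extremal regularity} to the sum-of-top-eigenvalues function $S_{i_0}(t)=\sum_{i=i_0}^d g^V_i(t)$ to express the left-hand derivative in terms of $-2\max_{C_{i_0,t}}\sum\ric^\star(\bar A_i,\bar A_i)$, invoke the pinching estimate from the proof of Corollary~\ref{cor:Ricci-A and Ricci A-bar} to replace the rescaled frame $A_i$ by $\bar A_i$ at the cost of a $1/g^V_1$ factor that \eqref{eq:min eigen is increasing} keeps bounded, and conclude via the fundamental theorem of calculus applied to the Lipschitz, monotone, bounded quantity $S_{i_0}$. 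You track the constant slightly more carefully (a harmless factor of $4$ versus $1$), but the argument, the key lemmas invoked, and the handling of eigenvalue regularity are the paper's.
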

	\begin{proof}
		Let $\langle \cdot, \cdot \rangle$ be an $\Ad(K)$-invariant background metric and let us order the eigenvalues of $g(t)$ restricted to $V$, $g^V_1 \leq \ldots \leq g^V_d$. 
		
		By equation \eqref{eq:extremal regularity} and the inequality \eqref{ineq: Ricci-A-bar} we get that for any $i_0 \leq d$,
		\begin{align*}
			\frac{d^-}{dt}\left(\sum_{i=i_0}^d g^V_i(t)\right)=& \min_{\{\bar{A}_i(t)\}^d_{i=i_0} \in C_{i_0,t}} \left(-2\sum_{i=i_0} ^{d}\ric_{g(t)} ^\star \left(\bar{A}_i(t),\bar{A}_i(t)\right) \right)\\
			=&  -2\max_{\{\bar{A}_i(t)\}^d_{i=i_0} \in C_{i_0,t}} \left(\sum_{i=i_0} ^{d}\ric_{g(t)} ^\star \left(\bar{A}_i(t),\bar{A}_i(t)\right) \right)\\
			\le&\ 0,
		\end{align*}
		and thus $\sum_{i=i_0}^dg^V_i(t)$ is nonincreasing.
	
		Let us set $\bar{f}_{i_0}(t)\coloneqq\max_{\{\bar{A}_i(t)\}^d_{i=i_0} \in C_{i_0,t}} \left(\sum_{i=i_0} ^{d}\ric_{g(t)} ^\star (\bar{A}_i(t),\bar{A}_i(t)) \right) \geq0$. By Remark \ref{remark:this implies lipschitz}, we have that $\sum_{i=i_0}^d g^V_i(t)$ is Lipschitz on compact intervals. Thus, by the fundamental theorem of calculus (\cite[Theorem 7.20]{rud})), we get that
		\begin{align*}
			-\sum_{i=i_0}^d g^V_i(t_0) \leq& \sum_{i=i_0}^d g^V_i(t)-\sum_{i=i_0}^d g^V_i(t_0)\\
			=& -2\int^t_{t_0}\bar{f}_{i_0}(s)ds\leq 0.
		\end{align*}
		Thus,
		\[0\leq \lim_{t\to T}\int^t_{t_0}\bar{f}_{i_0}(s)ds \leq \sum_{i=i_0}^d g^V_i(t_0)<\infty,\]
		where $T$ is the maximal time of existence of $g(t)$. 
		
		Note that, by estimate \eqref{ineq: Ricci-A-bar},
%		We have seen in the proof of estimate \eqref{ineq: Ricci-A-bar}, that
			\begin{align*}
			0\leq 2\sum_{i=i_0}^{d} \ric^\star \left(A_i,A_i\right) =& \sum_{\substack{k}}\sum_{\substack{i\geq i_0\\ j < i_0}}\left\langle \left[U_k, \bar{A}_i\right],\bar{A}_j \right\rangle^2\left(\frac{g_i}{g_j}-\frac{g_j}{g_i}\right)\\
		=& \sum_{\substack{k}} \sum_{\substack{i\geq i_0}}\frac{1}{g_i} \sum_{j< i_0}\left\langle \left[U_k, \bar{A}_i\right],\bar{A}_j \right\rangle^2\left(\frac{(g_i)^2}{g_j}-g_j\right)\\
		\leq&\ \frac{1}{g_1}\sum_{\substack{k}} \sum_{\substack{i\geq i_0\\ j < i_0}} \left\langle \left[U_k, \bar{A}_i\right],\bar{A}_j \right\rangle^2\left(\frac{(g_i)^2}{g_j}-g_j\right)\\
		\leq&\ \frac{1}{g_1}\sum_{i=i_0}^{d_\a} \ric^\star \left(\bar{A}_i,\bar{A}_i\right).
	\end{align*}
	%where in the last inequality we used the proof of estimate \eqref{ineq: Ricci-A-bar}
%	\begin{align*}
%			0\leq \sum_{i=i_0}^d \ric^\star \left(A_i,A_i\right)
%			\leq&\ \frac{1}{g^V_1}\sum_{i=i_0}^d \ric^\star \left(\bar{A}_i,\bar{A}_i\right).
%	\end{align*}
	
		Using the above facts and that $g^V_1(t) \geq g^V_1(t_0)>0$ (see inequality \eqref{eq:min eigen is increasing}), we get, for all $i_0=1, \ldots, d$, that
		\begin{align*}
			&\int^T_{t_0}\max_{\{\bar{A}_i(s)\}^d_{i=i_0} \in C_{i_0,s}} \left(\sum_{i=i_0} ^{d}\ric_{g_s} ^\star \left(A_i(s),A_i(s)\right) \right)ds \\
			&\leq\int^T_{t_0}\max_{\{\bar{A}_i(s)\}^d_{i=i_0} \in C_{i_0,s}} \left(\frac{1}{g^V_1(s)}\sum_{i=i_0} ^{d}\ric_{g_s} ^\star \left(\bar{A}_i(s),\bar{A}_i(s)\right) \right)ds \\
			&\leq \frac{1}{g^V_1(t_0)}\int^T_{t_0}\bar{f}_{i_0}(s) ds <\infty,
		\end{align*}
		where, recall, we denote $\bar{f}_{i_0}(s) = \max_{\{\bar{A}_i(s)\}^d_{i=i_0} \in C_{i_0,s}} \left(\sum_{i=i_0} ^{d}\ric_{g_s} ^\star \left(\bar{A}_i(s),\bar{A}_i(s)\right) \right)$.
	\end{proof}
	
	The integrable bound given above is going to be crucial for us because with it we have control over the error term appearing in the formula for $\ric_g^\star (Y,Y)$, $Y \in \mm_\uu$, in Proposition \ref{prop:Ricci-U}. This is the content of the following corollary of Lemma \ref{lemma:Ricci-A is integrable}.
	
	\begin{corollary}\label{cor:bad term is bounded}
		Let $(M,g(t))$, $t \in [t_0,T)$, be a stable $U\ltimes_\theta V$-homogeneous manifold with a maximal unimodular Ricci flow solution along $\theta$-adapted standard metrics $g(t)$, and let $\gg=\hh\oplus\mm_\uu \bigoplus_{\a \in \mathcal{I}^*} V^{\a}$ be a reductive decomposition adapted to $g(t)$. 
		Let $\{U_k(t)\}_{k=1}^{\dim \mm_\uu}$ be a $g(t)$-orthonormal frame for $\mm_\uu$ and $\{\bar{A}_i\}_{i=1}^d$ be a $\left.g(t)\right|_{V\times V}$-diagonalizing $\langle\cdot,\cdot\rangle$-orthonormal frame for $V$, with corresponding eigenvalues $g^V_i(t)$, $i=1,\ldots,d$ (see Notation \ref{not:orth frame}). Then, the nonnegative sum
		\[\sum_{k=1}^{\dim \mm_\uu}\sum_{i,j=1}^d\left\langle \left[U_k(t), \bar{A}_i(t)\right],\bar{A}_j(t) \right\rangle^2\left(\frac{g^V_i(t)}{g^V_j(t)}+\frac{g^V_j(t)}{g^V_i(t)}-2\right)\]
		is bounded from above by an integrable quantity along the flow.
	\end{corollary}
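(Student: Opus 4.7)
\medskip

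\noindent\textbf{Proof plan.} Write $g_i = g^V_i(t)$ and $c_{ij,k} = \langle[U_k(t),\bar{A}_i(t)],\bar{A}_j(t)\rangle^2$, so the quantity to be bounded is
\[
S(t) \;=\; \sum_{k=1}^{\dim\mm_\uu}\sum_{i,j=1}^d c_{ij,k}\!\left(\frac{g_i}{g_j}+\frac{g_j}{g_i}-2\right).
\]
The first observation is that $c_{ij,k}=c_{ji,k}$: as shown inside the proof of Proposition~\ref{prop:A bounds}, since $\langle\cdot,\cdot\rangle$ is $\Ad(K)$-invariant and $\theta(U_k)-\a(U_k)\Id$ is $\langle\cdot,\cdot\rangle$-skew on $V^\a$, the bracket $[U_k,\bar{A}_i]$ differs from a skew operator on $V$ applied to $\bar{A}_i$ by a scalar multiple of $\bar{A}_i$, which contributes zero to $c_{ij,k}$ when $i\neq j$; the remaining skew part gives exactly the desired symmetry. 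Combined with the obvious symmetry of the factor in parentheses and the fact that it vanishes on the diagonal $i=j$, this lets me halve the sum and write $S(t)=2\sum_{k}\sum_{i>j}c_{ij,k}(\tfrac{g_i}{g_j}+\tfrac{g_j}{g_i}-2)$.

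The next step is an elementary algebraic inequality: if we order $g_1\le\dots\le g_d$, then for $i>j$ we have $g_j/g_i\le 1$, and hence
\[
\frac{g_i}{g_j}+\frac{g_j}{g_i}-2 \;\le\; \frac{g_i}{g_j}-\frac{g_j}{g_i},
\]
since the difference of the two sides equals $2(g_j/g_i-1)\le 0$. Substituting this into the previous display yields
\[
S(t)\;\le\;2\sum_{k}\sum_{i>j} c_{ij,k}\!\left(\frac{g_i}{g_j}-\frac{g_j}{g_i}\right).
\]

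To turn the right-hand side into something integrable, I invoke Corollary~\ref{cor:Ricci-A and Ricci A-bar}, which identifies the ``truncated'' sum
\[
\sum_{k}\sum_{\substack{i\ge i_0\\ j< i_0}} c_{ij,k}\!\left(\frac{g_i}{g_j}-\frac{g_j}{g_i}\right)
\;=\; 2\sum_{i=i_0}^d \ric^\star_{g(t)}\!\left(A_i(t),A_i(t)\right)
\]
as a \emph{nonnegative} quantity for every threshold $i_0\in\{1,\dots,d\}$. For a fixed pair $i>j$, the indicator ``$i\ge i_0$ and $j<i_0$'' holds precisely for $i_0\in\{j+1,\dots,i\}$, so summing the truncated identity over $i_0=1,\dots,d$ each pair $(i,j)$ is counted exactly $i-j\ge 1$ times. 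Thus a double-counting gives
\[
\sum_k\sum_{i>j} c_{ij,k}\!\left(\frac{g_i}{g_j}-\frac{g_j}{g_i}\right)
\;\le\; \sum_{i_0=1}^d\sum_{k}\sum_{\substack{i\ge i_0\\ j< i_0}} c_{ij,k}\!\left(\frac{g_i}{g_j}-\frac{g_j}{g_i}\right)
\;=\; 2\sum_{i_0=1}^d \sum_{i=i_0}^d \ric^\star_{g(t)}(A_i(t),A_i(t)),
\]
where all the summands on the right are nonnegative by inequality~\eqref{ineq:Ricci-A is pos}. Combining the two displays, $S(t)\le 4\sum_{i_0=1}^d\sum_{i=i_0}^d\ric^\star_{g(t)}(A_i(t),A_i(t))$, and Lemma~\ref{lemma:Ricci-A is integrable} bounds each of the $d$ terms on the right by an integrable function on the maximal interval of existence, proving the corollary.

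\medskip

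\noindent\textbf{Main obstacle.} Individual terms $\ric^\star(A_i,A_i)$ need not have a sign, so one cannot simply compare $S(t)$ with a single curvature component. The key trick is that only the \emph{tails} $\sum_{i\ge i_0}\ric^\star(A_i,A_i)$ are nonnegative; the passage from the pointwise asymmetric expression $\tfrac{g_i}{g_j}-\tfrac{g_j}{g_i}$ (which sums to zero over all pairs by skew-symmetry) to a \emph{symmetric} positive quantity like $S(t)$ is precisely what forces one to aggregate all $d$ truncation levels via the above double-counting.
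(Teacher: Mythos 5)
Your proof is correct and follows the same path as the paper's own argument: the pointwise inequality $g_i/g_j + g_j/g_i - 2 \le g_i/g_j - g_j/g_i$ for $g_i \ge g_j$, the nonnegativity of the tails $\sum_{i\ge i_0}\ric^\star(A_i,A_i)$ from Corollary~\ref{cor:Ricci-A and Ricci A-bar}, and their integrability from Lemma~\ref{lemma:Ricci-A is integrable}; your sum over all thresholds $i_0$ with the explicit multiplicity count $i-j$ is simply a tidier packaging of what the paper does by bounding each pair $(i,j)$ by $f_j(t)$ and absorbing multiplicities into the constant $N$. (As a small aside, your restatement of \eqref{ineq:Ricci-A is pos} with the index set $\{i\ge i_0,\ j<i_0\}$ is the exact form of the identity: the split $\{j\le i_0\}\cup\{j\ge i_0\}$ in the printed corollary double-counts the boundary $j=i_0$, though since the surplus terms are nonnegative this does not affect the stated inequality nor the rest of the argument.)
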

	\begin{proof}
		Recall formula \eqref{ineq:Ricci-A is pos},
		\[\sum_{i=i_0}^d \ric^\star \left(A_i,A_i\right) = \sum_{\substack{k}}\sum_{\substack{i\geq i_0\\ j < i_0}}\left\langle \left[U_k, \bar{A}_i\right],\bar{A}_j \right\rangle^2\left(\frac{g^V_i}{g^V_j}-\frac{g^V_j}{g^V_i}\right),\]
		for all $i_0 = 1 \ldots d$. And observe that if $g^V_i(t)\geq g^V_j(t)$, then
		\[ \left\langle \left[U_k(t), \bar{A}_i(t)\right],\bar{A}_j(t) \right\rangle^2\left(\frac{g^V_i(t)}{g^V_j(t)}+\frac{g^V_j(t)}{g^V_i(t)}-2\right)\leq \left\langle \left[U_k(t), \bar{A}_i(t)\right],\bar{A}_j(t) \right\rangle^2\left(\frac{g^V_i(t)}{g^V_j(t)}-\frac{g^V_j(t)}{g^V_i(t)}\right).\]
		
		It then follows immediately from Lemma \ref{lemma:Ricci-A is integrable} that for all $i,j = 1, \ldots, d$, the nonnegative term
		\[\sum_{k=1}^{\dim \mm_\uu}\left\langle \left[U_k(t), \bar{A}_i(t)\right],\bar{A}_j(t) \right\rangle^2\left(\frac{g^V_i(t)}{g^V_j(t)}+\frac{g^V_j(t)}{g^V_i(t)}-2\right)\]
		is bounded from above by the integrable quantity 
		\begin{equation*}\label{eq:integrable upper bounds}
			f_j(t) \coloneqq \max_{\{\bar{A}_i(t)\}^d_{i=j} \in C_{j,t}} \left(\sum_{i=j} ^{d}\ric_{g(t)} ^\star \left(A_i(t),A_i(t)\right) \right),
		\end{equation*}
		for arbitrary $g(t)$-orthonormal frame $\{U_k(t)\}_{k=1}^{\dim\mm_\uu}$ for $\mm_\uu$, and for $g(t)$-diagonalizing $\langle\cdot,\cdot\rangle$-orthonormal vectors $\bar{A}_i(t)$ and $\bar{A}_j(t)$.
		
		Since these are all finite sums, for some large $N \in \mathbb{N}$,
		\[\sum_{k=1}^{\dim \mm_\uu}\sum_{i,j=1}^d\left\langle \left[U_k(t), \bar{A}_i(t)\right],\bar{A}_j(t) \right\rangle^2\left(\frac{g^V_i(t)}{g^V_j(t)}+\frac{g^V_j(t)}{g^V_i(t)}-2\right)\leq N \sum_{i=1}^d f_i(t),\]
		where $F(t)\coloneqq N \sum_{i=1}^d f_i(t)$  is an integrable quantity along the flow. 
	\end{proof}
	
	By Corollary \ref{cor:Ricci^star on Lm is bounded}, we have that the unimodular Ricci tensor in the direction of the $g(t)$-unitary eigenvector $L_m(t) \in \ll\subset \kk$ 
	corresponding to the largest eigenvalue $g^\ll_m(t)$ of $\left.g(t)\right|_{\ll\times \ll}$ has the following bound
	\begin{align*}
		-2\ric_{g(t)}^\star(L_m(t),L_m(t)) =& -2\ric_{(U/H,g(t))}(L_m(t),L_m(t))\\
		&+\frac{1}{2}\sum_{i,j}\left\langle \left[L_m(t),\bar{A}_i(t)\right],\bar{A}_j(t) \right\rangle^2\left(\frac{g^V_i(t)}{g^V_j(t)}+\frac{g^V_j(t)}{g^V_i(t)}-2\right) \\
		\leq& \ \frac{1}{2}\sum_{i,j}\left\langle \left[L_m(t),\bar{A}_i(t)\right],\bar{A}_j (t)\right\rangle^2\left(\frac{g^V_i(t)}{g^V_j(t)}+\frac{g^V_j(t)}{g^V_i(t)}-2\right).
	\end{align*}
	Combining this with the integral bound in Corollary \ref{cor:bad term is bounded}, we have that
	\begin{align*}
		\frac{d^-}{dt}\log g^\ll_m(t) =& \min_{\{\bar{L}_m(t) \in C_t\}}\left( -2 \ric_{g(t)}^\star(L(t)_m,L(t)_m)\right)\\
		\leq& \  \frac{1}{2}\sum_{i,j}\left\langle \left[L_m(t),\bar{A}_i(t)\right],\bar{A}_j (t)\right\rangle^2\left(\frac{g^V_i(t)}{g^V_j(t)}+\frac{g^V_j(t)}{g^V_i(t)}-2\right)\\
		\leq& \ F(t),
	\end{align*}
	where $F(t)\coloneqq N \sum_{i=1}^d f_i(t)$, for some large $N \in \mathbb{N}$, is integrable along the unimodular Ricci flow.
	
	Integrating both sides, this very rough upper bound gives us
	\begin{align*}
		\log g^\ll_m(t)-\log g^\ll_m(t_0)	\leq \int^T_{t_0} F(s)ds= C_0<\infty.
	\end{align*}
	
	We have then shown that the integral estimates obtained above yield the following lemma.
	\begin{lemma}\label{lemma: gL is bounded}
			Let $(M,g(t))$, $t \in [t_0,T)$, be a stable $U\ltimes_\theta V$-homogeneous manifold with a maximal unimodular Ricci flow solution along $\theta$-adapted standard metrics $g(t)$, and let $\gg=\hh\oplus\mm_\uu \bigoplus_{\a \in \mathcal{I}^*} V^{\a}$ be a reductive decomposition adapted to $g(t)$. 
		Then, the metric induced on the compact submanifold $K/H$ remains bounded along the flow; that is, there exists a positive constant $C_0$ such that
		\[\vert \left.g(t)\right|_{\ll\times\ll} \vert _{g_{t_0}} \leq C_0,\]
		for all $t_0 \leq t <T$.
	\end{lemma}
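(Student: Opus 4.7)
The plan is to show that the largest eigenvalue $g^\ll_m(t)$ of $\left.g(t)\right|_{\ll\times\ll}$ stays bounded in logarithmic scale, which implies boundedness of all eigenvalues of $\left.g(t)\right|_{\ll\times\ll}$ (since the smallest one is already controlled by a monotonicity argument analogous to \eqref{eq:min eigen is increasing} applied to $\ll\subset\kk$, or alternatively since $\ll$ has finite dimension and we control $g^\ll_m$ together with the volume-type quantities). The strategy is to differentiate $\log g^\ll_m(t)$ via formula \eqref{eq:extremal regularity} and bound the result by an integrable function of time.

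First, I would apply equation \eqref{eq:extremal regularity} to the function $\log g^\ll_m(t)$, which is locally Lipschitz by Remark \ref{remark:this implies lipschitz}, to obtain
\begin{equation*}
\frac{d^-}{dt}\log g^\ll_m(t) = \min_{\bar{L}_m(t)\in C_t}\left(-2\,\ric^\star_{g(t)}(L_m(t),L_m(t))\right),
\end{equation*}
where $L_m(t)=\bar{L}_m(t)/\sqrt{g^\ll_m(t)}$ is the $g(t)$-unit vector in the eigendirection. Next, I would invoke Corollary \ref{cor:Ricci^star on Lm is bounded}, noting that $\ric_{U/H}(L_m,L_m)\geq 0$ along the largest eigendirection in $\ll$ by the proof of \cite[Theorem 3.1]{boe} (since $-\tfrac14\Bk_\kk\geq 0$ on $\kk$ and the extra Ricci contribution of $U/H$ in the largest eigendirection is nonnegative), so that
\begin{equation*}
-2\,\ric^\star_{g(t)}(L_m(t),L_m(t)) \leq \tfrac{1}{2}\sum_{i,j}\left\langle[L_m(t),\bar{A}_i(t)],\bar{A}_j(t)\right\rangle^2\left(\tfrac{g^V_i(t)}{g^V_j(t)}+\tfrac{g^V_j(t)}{g^V_i(t)}-2\right).
\end{equation*}

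The crucial step is then the application of Corollary \ref{cor:bad term is bounded}: the right-hand side above is bounded pointwise by the integrable function $F(t)=N\sum_{i=1}^d f_i(t)$ from that corollary. Hence $\frac{d^-}{dt}\log g^\ll_m(t)\leq F(t)$, and integrating from $t_0$ to any $t<T$ yields
\begin{equation*}
\log g^\ll_m(t)-\log g^\ll_m(t_0)\leq \int_{t_0}^T F(s)\,ds =: C_0 <\infty.
\end{equation*}
This bounds $g^\ll_m(t)$, hence all of $\left.g(t)\right|_{\ll\times\ll}$ up to a background constant, as desired.

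The main obstacle I anticipate is that the expression $-2\,\ric^\star(L_m,L_m)$ genuinely contains the pinching term $\tfrac{g^V_i}{g^V_j}+\tfrac{g^V_j}{g^V_i}-2$, which is not a priori small at any given time and could in principle blow up as the flow pinches. The whole point of Lemma \ref{lemma:Ricci-A is integrable} and Corollary \ref{cor:bad term is bounded} is exactly to provide an \emph{integral}, rather than pointwise, control of this error term: pointwise pinching is allowed, but the cumulative pinching damage is finite because it is coupled to the monotone decay of $\sum_{i\geq i_0} g^V_i(t)$. Once this integrability is in hand, the proof of the lemma reduces to the ODE comparison sketched above.
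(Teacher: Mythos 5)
Your argument is correct and follows the paper's proof essentially verbatim: differentiate $\log g^\ll_m(t)$ via \eqref{eq:extremal regularity}, drop the nonnegative $\ric_{U/H}(L_m,L_m)$ term via Corollary \ref{cor:Ricci^star on Lm is bounded} and \cite[Theorem 3.1]{boe}, bound the remaining pinching term by the integrable $F(t)$ from Corollary \ref{cor:bad term is bounded}, and integrate. The only superfluous remark is your mention of needing to also control the smallest eigenvalue of $\left.g(t)\right|_{\ll\times\ll}$: boundedness of the largest eigenvalue $g^\ll_m(t)$ alone already gives the stated bound $\vert\left.g(t)\right|_{\ll\times\ll}\vert_{g_{t_0}}\leq C_0$, so that step can be omitted.
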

	\begin{remark*}
		By Lemma \ref{lemma: gL is bounded}, we have that in the directions $\ll$ and $V$ the expansion of the metric $g(t)$ is bounded from above. However, note that in the direction of $\ll^{\perp_{g(t)}} \subset \mm_\uu$ we do not have an upper bound for the expansion of the metric $g(t)$. For instance, let $\bar{Z} \in \hat{\zz}$ be such that $\left.\theta(\bar{Z})\right|_{V^\a}=\a(\bar{Z})\cdot\Id$. Since we assumed that $\alpha(\bar{Z})\neq0$ for some $\alpha \in \uu^*$, it follows from the formula in Remark \ref{remark:Ricci_U in terms of intrinsic and V-action} that the metric in the $\bar{Z}$ direction expands linearly with constant $2\dim V^\alpha\lambda(\bar{Z})^2$.
	\end{remark*}

Before proving the main result of this section, we present a brief application of the integrability of the Ricci curvature along the flow to the case of preflat solvmanifolds (Lemma~\ref{lemma:Ricci-A is integrable}). We say that a Riemannian solvmanifold is \textit{preflat} if it admits a flat left-invariant metric. By \cite[Theorem 1.5]{mil}, we know that a preflat solvmanifold is a stable $U\ltimes_\theta V$-homogeneous manifold, so we can consider the Ricci flow along $\theta$-adapted standard metrics on such spaces.
\begin{corollary}\label{cor:pre flat implies flat conv}
			Let $(\mathcal{S},g(t))$, $t \in (0,\infty)$, be a homogeneous Ricci flow solution on a preflat solvmanifold with an initial $\theta$-adapted standard left-invariant metric. Then the parabolic blowdown $s^{-1}g(st)$ converges in the $C^\infty$ pointed topology to the flat metric as $s \to \infty$.
		\end{corollary}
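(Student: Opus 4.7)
The plan is to use Milnor's structural theorem to place the preflat solvmanifold into the framework of the paper, then exploit the monotonicity and conservation identities above to show that the Ricci flow converges smoothly to a flat metric, and finally deduce the blowdown statement. By \cite[Theorem 1.5]{mil}, $\gg=\uu\ltimes_\theta V$ with $\uu$ and $V$ abelian and $\theta(\uu)$ consisting of skew-symmetric operators for some background inner product $\langle\cdot,\cdot\rangle$ on $V$. Then $\theta$ is stable in the sense of Definition \ref{def:stable rep}, every weight $\alpha$ vanishes so $\hat{\zz}=0$, and $\gg$ is unimodular; in particular the Ricci and unimodular Ricci flows coincide by Corollary \ref{cor:uni RF is equivalent to RF}, the $\theta$-adapted standard condition is preserved by Proposition \ref{prop:Ricci invariant semidirect}, and $U/H$ is flat with $\ric_{U/H}\equiv 0$.

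Next I would show $g(t)\to g_\infty$ smoothly with $g_\infty$ flat. On the $V$-factor, the monotonicity \eqref{eq:max eigen is decreasing}--\eqref{eq:min eigen is increasing} of the extremal eigenvalues of $g(t)|_V$ with respect to $\langle\cdot,\cdot\rangle$ combines with conservation of $\det(g(t)|_V)$ --- which follows from $\sum_i \ric^\star(A_i,A_i)=0$ (remark after Corollary \ref{cor:Ricci-A and Ricci A-bar}) together with unimodularity --- to force all eigenvalues to converge to the common value $(\det g_0|_V)^{1/\dim V}$, so $g(t)|_V \to c\langle\cdot,\cdot\rangle|_V$. On the compact factor $\ll$, Corollary \ref{prop:L bound} yields $\ric^\star|_{\ll\times\ll}\le 0$ (the pinching correction is nonpositive and $\ric_{U/H}\equiv 0$), so $g(t)|_{\ll\times\ll}$ is monotone nondecreasing, and Lemma \ref{lemma: gL is bounded} gives a uniform upper bound, so it converges. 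Real-analyticity of the homogeneous Ricci flow then upgrades these eigenvalue convergences to $C^\infty$ convergence of the metric tensor. The limit $g_\infty$ is stationary, hence Ricci-flat; since $g_\infty|_V = c\langle\cdot,\cdot\rangle|_V$ still makes $\theta(\uu)$ act by skew-symmetric operators, Milnor's theorem identifies $g_\infty$ as flat.

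The blowdown statement is then immediate: $(\mathcal{S},g_\infty)$ is a simply connected complete flat Riemannian manifold, hence isometric to a Euclidean space, and Euclidean dilation intertwines $s^{-1}g_\infty$ with $g_\infty$. For any fixed $t>0$, $g(st)\to g_\infty$ smoothly as $s\to\infty$, so $s^{-1}g(st)$ converges in the pointed $C^\infty$ topology to the flat metric. The main obstacle is upgrading eigenvalue convergence of $g(t)|_V$ and monotone convergence of $g(t)|_{\ll\times\ll}$ to $C^\infty$ convergence of the whole tensor; this will rest on the real-analytic regularity discussed in Subsection \ref{subsec:main} together with Shi-type derivative estimates implied by the uniform curvature bounds along the bounded flow.
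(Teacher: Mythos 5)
Your reduction via Milnor's theorem matches the paper's setup, but the core of your argument has genuine gaps, and the paper takes a different (and shorter) route precisely to avoid them.

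First, the claim that the extremal eigenvalue monotonicity together with conservation of $\det\bigl(\left.g(t)\right|_{V}\bigr)$ forces all eigenvalues of $\left.g(t)\right|_{V}$ to converge to the common value $(\det \left.g_0\right|_V)^{1/\dim V}$ is false: the maximal eigenvalue is nonincreasing and the minimal one nondecreasing, so each converges, but their limits need not coincide, and a constant determinant is compatible with any limiting spectrum of fixed product (e.g.\ eigenvalues frozen at $2$ and $1/2$). The paper explicitly warns about exactly this point in Subsection \ref{subsec:main}: the algebraic estimates are \emph{not} enough to conclude that the pinching $g^\a_{d_\a}/g^\a_1$ tends to $1$ on an immortal flow. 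Since your identification of the limit as flat rests on $\left.g_\infty\right|_V$ being a multiple of the background metric (so that $\theta(\uu)$ stays skew-symmetric), this gap is fatal to your convergence argument. A secondary issue: even convergence of all eigenvalues would not give convergence of the tensor $\left.g(t)\right|_V$, since the eigenframe may rotate indefinitely.

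Second, the final step is not ``immediate'' even granting $g(t)\to g_\infty$ flat in $C^\infty$. The blowdown $s^{-1}g(st)$ has curvature $s\cdot\Rm(g(st))$, so convergence of the blowdown to the flat metric requires a \emph{rate}, namely $|\Rm(g(t))|=o(1/t)$; mere convergence $\Rm(g(st))\to 0$ does not suffice (consider $g(t)=g_\infty+t^{-1/2}h$). The paper supplies precisely this rate: by Corollary \ref{cor:bad term is bounded} the scalar curvature is integrable along the flow, and since $-\Rscal(g(t))$ is nonnegative and nonincreasing (by the evolution equation \eqref{eq:scalar curvature evolution}), integrability forces $\Rscal(g(t))\,t\to 0$. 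Hamilton compactness then gives a blowdown limit $g_\infty(t)$ with $\Rscal(g_\infty)\le 0$ and $\Rscal(g_\infty(1))=0$, whence $\Ric(g_\infty(1))=0$ by monotonicity of the scalar curvature, and flatness follows from Alekseevskii--Kimel'fel'd. I recommend you replace your convergence argument with this integrability-plus-compactness scheme.
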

	\begin{proof}
	By \cite[Theorem 1.5]{mil}, we know that $\mathfrak{s} = \Lie(\mathcal{S})$ has the following decomposition
	\[\mathfrak{s} = \aa \ltimes_{\theta} V, \]
	where, $V$ is the abelian nilradical, $[\aa,\aa]=0$, and $\theta$ is a representation via semisimple operators with purely imaginary eigenvalues. If we then assume that the initial metric $g_0(\aa,V)=0$, then Corollary \ref{cor:bad term is bounded} tells us that the scalar curvature $\Rscal(g(t))$ is integrable along the flow and this implies in turn that $\Rscal(g(t))t \to 0$ as $t \to \infty$. Indeed, otherwise there would be $c>0$ and an infinite sequence $t_i\to \infty$ such that $ \Rscal(g(t_i))t_i = c$. Given $0<\epsilon<1$, we can then pick $t_{i_n}$ such that $\frac{t_{(i_{n}+1)}}{t_{i_{(n+1)}}} \leq \epsilon^n$, for $1 \leq n$. 

	Recall the evolution equation of the scalar curvature along a homogeneous solution to the Ricci flow \cite[Lemma 2.7]{chow},
	\begin{equation}\label{eq:scalar curvature evolution}
		\frac {d}{dt}\Rscal(g(t)) = \Vert \Ric_{g(t)} \Vert^2_{g(t)} \geq 0.
	\end{equation}
	In our case, since $-\Rscal(g(t))$ is always positive and decreasing along the flow, we would then have that 
	\[\int_{t_0}^\infty -\Rscal(g(t)) \geq c\sum_{i=1}^{i_N}\frac{\left(t_i-t_{i-1}\right)}{t_i}\geq c\sum_{n=1}^N\frac{t_{(i_{n+1})}-t_{i_{n}+1}}{t_{(i_{n+1})}} \geq cN - c\sum_{n=1}^N\epsilon^n \xrightarrow[N \to \infty]{} +\infty,\]
	which is a contradiction.
	
	Now, by Hamilton's compactness \cite{ham95}, $s^{-1}g(st)$ converges to a homogeneous solution $g_{\infty}(t)$, $t \in (0,\infty)$, with $\Rscal(g_\infty(t))\leq 0$ and $\Rscal(g_\infty(1)) =0$, which implies by the evolution equation \eqref{eq:scalar curvature evolution} that $g_{\infty}(1)$ is Ricci flat, and thus flat by \cite{ak}.
	\end{proof}

	With Lemma \ref{lemma: gL is bounded} we are ready to prove Theorem \ref{thm:A}, which we rephrase here for convenience.
	\begin{theorem}[Theorem \ref{thm:A}] \label{thm:main}
		Let $(M,g_0)$ be a stable $U\ltimes_\theta V$-homogeneous manifold with a $\theta$-adapted standard metric $g_0$.
		If the universal cover of $M$ is not contractible, then the Ricci flow solution starting at $g_0$ has finite extinction time.
	\end{theorem}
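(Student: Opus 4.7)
The plan is to invoke the equivalence of Corollary \ref{cor:uni RF is equivalent to RF} and work with the unimodular Ricci flow $g^\star(t)$, arguing by contradiction. Assume $g^\star(t)$ (and hence $g(t)$) is immortal. First, I translate the topological hypothesis into Lie algebraic data. By Remark \ref{rem:homogeneous bundle}, $M$ is, up to coverings, a principal $V\cong \mathbb R^{\dim V}$-bundle over $U/H = K/H\times \hat Z$, and both $V$ and $\hat Z$ contribute only Euclidean factors to the universal cover. The hypothesis therefore forces $K/H$ not to be a torus, so $\kk_{ss}=[\kk,\kk]\neq 0$ and $\Bk_\kk$ is negative definite on $\kk_{ss}$. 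I choose the reductive complement so that $\ll=\ll_{ss}\oplus \ll_z$ with $\ll_{ss}\neq 0$.

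Next, I extract a uniform positive intrinsic Ricci direction on $K/H$. Let $\bar L_m(t)\in \ll_{ss}$ be a $\langle\cdot,\cdot\rangle$-unit eigenvector of $\left.P^\ll(t)\right|_{\ll_{ss}}$ corresponding to its largest eigenvalue $g^\ll_m(t)$, and set $L_m(t):=\bar L_m(t)/\sqrt{g^\ll_m(t)}$, which is $g^\star(t)$-unit. Lemma \ref{lemma: gL is bounded} gives $g^\ll_m(t)\le C_0$ uniformly in $t$. Adapting Böhm's argument from the proof of \cite[Theorem 3.2]{boe} to this semisimple eigendirection and exploiting the negative definiteness of $\Bk_\kk$ on $\kk_{ss}$, I expect to obtain a constant $c>0$, depending only on the initial data, such that the intrinsic Ricci of the induced metric $\bar g(t)$ on $K/H$ satisfies
\[ \ric_{(K/H,\bar g(t))}(L_m(t),L_m(t)) \ge \frac{c}{g^\ll_m(t)} \qquad \text{for all } t\ge 0.\]

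Now I transfer the bound to the ambient flow. Since $U/H=K/H\times \hat Z$ with $\hat Z$ flat and $L_m\in\ll\subset\kk$, one has $\ric_{(U/H,\bar g(t))}(L_m,L_m)=\ric_{(K/H,\bar g(t))}(L_m,L_m)$, so Corollary \ref{prop:L bound} gives
\[ \ric^\star_{g^\star(t)}(L_m,L_m) = \ric_{(K/H,\bar g(t))}(L_m,L_m) - B(t),\qquad B(t)\ge 0,\]
with $B(t)$ the nonnegative error term appearing in that formula. Including $L_m(t)$ in a $g^\star(t)$-orthonormal basis of $\mm_\uu$, Corollary \ref{cor:bad term is bounded} shows $B(t)\le F(t)$ for some $F$ with $\int_0^\infty F(s)\,ds\le K<\infty$. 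Applying the envelope formula \eqref{eq:extremal regularity} to $g^\ll_m(t)$ yields
\[\tfrac{d^-g^\ll_m}{dt}(t) \;\le\; -2\,\ric^\star(\bar L_m,\bar L_m) \;=\; -2g^\ll_m(t)\,\ric^\star(L_m,L_m) \;\le\; -2c + 2C_0 F(t),\]
so, integrating,
\[ 0 \;<\; g^\ll_m(t) \;\le\; g^\ll_m(0) + 2C_0 K - 2c\,t,\]
and the right-hand side is strictly negative for large $t$. This contradicts $g^\ll_m(t)>0$, so the unimodular, and thereby the Ricci, flow must have finite extinction time.

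\emph{The main obstacle} is the intrinsic lower bound $\ric_{(K/H,\bar g(t))}(L_m,L_m)\ge c/g^\ll_m$ used above. Böhm's estimate in \cite[Theorem 3.2]{boe} is proved along the \emph{intrinsic} Ricci flow on a compact, non-toral $K/H$, while here $\bar g(t)$ is the restriction of the ambient unimodular flow and evolves differently; one must verify that the inequality is in fact algebraic, depending only on the $\bar g(t)$-eigenvalue data on $\ll_{ss}$ and on the uniform bound $C_0$ from Lemma \ref{lemma: gL is bounded}, rather than on the evolution equation. A secondary subtlety is confining $L_m(t)$ to the semisimple part $\ll_{ss}$ (rather than all of $\ll$), so that the negative definiteness of $\Bk_\kk$ on $\kk_{ss}$ is genuinely exploited and the constant $c$ is independent of $t$.
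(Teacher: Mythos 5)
Your proposal follows the paper's proof almost step for step: pass to the unimodular flow, restrict attention to the largest eigendirection of the metric on $\ll_{ss}$, get an integrable bound on the error term from Corollary \ref{cor:bad term is bounded}, and integrate the envelope inequality \eqref{eq:extremal regularity} to force $g^\ll_{m'}(t)$ negative in finite time. Your two flagged uncertainties are exactly the points the paper takes care of, and both resolve in your favor. On the ``main obstacle'': the intrinsic lower bound does not come from the flow-dependent argument of \cite[Theorem 3.2]{boe} but from the \emph{pointwise, algebraic} computation in the proof of \cite[Theorem 3.1]{boe} (this is precisely what Corollary \ref{cor:Ricci^star on Lm is bounded} records). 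Applied to the $\langle\cdot,\cdot\rangle$-unit eigenvector $\bar{L}_{m'}(t)$ of $P^{\ll_{ss}}_t = \pr_{\ll_{ss}}\circ P_t|_{\ll_{ss}}$, it gives $\ric_{U/H}(\bar{L}_{m'},\bar{L}_{m'})\ge b_0/4$ with $b_0:=\min\{-\Bk_\kk(L,L)\ :\ L\in\ll_{ss},\ \langle L,L\rangle=1\}>0$; after dividing by $g^\ll_{m'}$ this is your inequality $\ric(L_{m'},L_{m'})\ge c/g^\ll_{m'}$, and it depends only on the eigendata at time $t$, not on $C_0$ nor on the evolution equation. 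On the ``secondary subtlety'': the paper restricts the eigenvector to $\ll_{ss}$ exactly as you propose, via the projection $P^{\ll_{ss}}$, so that $b_0>0$.

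One minor imprecision worth noting: $U/H = K/H\times\hat Z$ is a product of manifolds but \emph{not} in general a Riemannian product, since Notation \ref{not:orth frame} explicitly allows $\ll$ to fail to be $g$-orthogonal to $\hat\zz$. So your claimed equality $\ric_{(U/H,\bar g)}(L_m,L_m)=\ric_{(K/H,\bar g)}(L_m,L_m)$ is not justified as stated. This does not hurt you, because the paper sidesteps the issue entirely: the estimate from \cite[Theorem 3.1]{boe} is applied directly to $\ric_{U/H}$, using only that $\hat\zz\subset\zz(\uu)$ so that $\Bk_\uu$ and $\Bk_\kk$ agree on $\ll_{ss}$. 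Replace your intermediate equality by that observation and the argument is exactly the paper's.
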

	
	\begin{proof}
		Let $g(t)$ be the unimodular Ricci flow solution such that $g(0)=g_0$.
		By Corollary \ref{cor:uni RF is equivalent to RF}, the unimodular Ricci flow \eqref{eq:unimodular RF} is equivalent to the homogeneous Ricci flow, so we just need to show that $g(t)$ has finite extinction time.
		Moreover, we have shown that the space of $\theta$-adapted standard metrics $\mathfrak{S}_{U\ltimes V}$ is invariant under the unimodular Ricci flow (Corollary \ref{cor: polar is uni RF invariant}). So $g(t) \in \mathfrak{S}_{U\ltimes V}$ for all $t$ in the maximal interval of future existence $[0,T)$, with same adapted reductive decomposition $\gg=\hh\oplus\mm_\uu \bigoplus_{\a \in \mathcal{I}^*} V^{\a}$, $\mathcal{I}^*=\{\a_1,\ldots,\a_p\} \subset \uu^*$.
		
		Let us consider the unimodular Ricci tensor formula in the compact, semisimple direction $\ll_{ss} \subset\ll \subset \kk$, namely, $\ll_{ss}= \ll\cap\kk_{ss}$, where $\kk_{ss}$ is the derived, compact, semisimple Lie algebra $\kk_{ss}\coloneqq[\kk,\kk] \subset \kk$. Since, by hypothesis, the universal cover of $M$ is not contractible, $\ll_{ss}$ is not trivial and it corresponds to the tangent space at the base point of the nonflat compact submanifold $K_{ss}/\left(K_{ss}\cap H\right) \subset U/H$. 
		
		Recall the formula \eqref{eq:Ricci-U formula} for the unimodular Ricci curvature of a $\theta$-adapted standard metric in the direction of $Y \in \mm_\uu$,
		\[\ric^\star(Y,Y) =\ric_{U/H}(Y,Y)- \frac{1}{2}\left(\sum_{\alpha,i}\Vert\left[Y,A^\alpha_i\right]\Vert_g^2+\left.\tr\left(\ad^2Y\right\vert_{V}\right)\right),\]
		and note that the second term in the right-hand side splits nicely with respect to the decomposition $V^{\a_1} \perp_{g(t)} \ldots \perp_{g(t)} V^{\a_p}$. Our goal is to control this error term and show that in the direction of $\ll_{ss}$ the metric $g(t)$ eventually shrinks, collapsing in finite time. As we observed before, Lemmas \ref{lemma:Ricci-A is integrable} and \ref{lemma: gL is bounded} generalize immediately for the case $V^{\a_1} \perp_{g(t)} \ldots \perp_{g(t)} V^{\a_p}$; thus, we can assume without loss of generality that $\mathcal{I}=\{\alpha_1\}$, i.e., that we have a single submodule $V = V^{\alpha_1}$, such that $\hat{\zz}$ acts on $V$ as a multiple of the identity plus a semisimple operator with purely imaginary eigenvalues.  
		
		Let us fix an $\Ad(K)$-invariant background metric $\langle\cdot,\cdot\rangle$ on $\mm$ and a $\left.g(t)\right|_{V\times V}$-diagonalizing $\langle\cdot,\cdot\rangle$-orthonormal frame $\{\bar{A}_i\}_{i=1}^d$, with corresponding eigenvalues $g^V_1, \ldots, g^V_d$, as in Notation \ref{not:orth frame}. Let $\bar{L}_{m'}(t)$ be a $\langle \cdot, \cdot \rangle$-unitary eigenvector corresponding to the largest eigenvalue $g^\ll_{m'}(t)$ of $P_t^{\ll_{ss}} \coloneqq \pr_{\ll_{ss}}\circ \left.P_t\right| _{\ll_{ss}}$. 
		
		Let us now recall the formula \eqref{eq:Ricci-L formula} for the unimodular Ricci curvature of a $\theta$-adapted standard metric in the direction of $L \in \ll$,
		\[	\ric^\star(L,L) = \ric_{U/H}(L,L)-\frac{1}{4}\sum_{i,j}\left\langle \left[L,\bar{A}_i\right],\bar{A}_j \right\rangle^2\left(\frac{g^V_i}{g^V_j}+\frac{g^V_j}{g^V_i}-2\right).\]
		The computation in the proof of \cite[Theorem 3.1]{boe} shows that
		\[\ric_{U/H}\left(\bar{L}_{m'},\bar{L}_{m'}\right) \geq \frac{b_0}{4}>0,\]
		where $b_0\coloneqq \min \{-\Bk_\kk(L,L) \ | \ L \in \ll_{ss} \ | \ \langle L, L \rangle =1\}>0$.
		
		Let $L_{m'}\coloneqq\frac{\bar{L}_{m'}(t)}{\sqrt{g^\ll_{m'(t)}}}$. By Corollary \ref{cor:bad term is bounded},  
		\[\sum_{i,j}\left\langle \left[L_{m'}(t), \bar{A}_i(t)\right],\bar{A}_j(t) \right\rangle^2\left(\frac{g^V_i(t)}{g^V_j(t)}+\frac{g^V_j(t)}{g^V_i(t)}-2\right)\]
		is bounded from above by a function $F(t)$ which is integrable along the flow.
		
		Therefore, we get that
		\begin{align*}
			\frac{d^-g^\ll_{m'}(t)}{dt} \leq &-2 \ric_{g(t)}^\star\left(\bar{L}_{m'}(t),\bar{L}_{m'}(t)\right) \\
			= &-2 \ric_{(U/H,g(t))}\left(\bar{L}_{m'}(t),\bar{L}_{m'}(t)\right)\\
			&+\frac{1}{2}\sum_{i,j}\left\langle \left[\bar{L}(t)_{m'},\bar{A}(t)_i\right],\bar{A}(t)_j \right\rangle^2\left(\frac{g^V_i(t)}{g^V_j(t)}+\frac{g^V_j(t)}{g^V_i(t)}-2\right) \\
			\leq &-\frac{b_0}{2}+\frac{g^\ll_{m'}(t)}{2}\sum_{i,j}\left\langle \left[L_{m'}(t),\bar{A}_i(t)\right],\bar{A}_j(t) \right\rangle^2\left(\frac{g^V_i(t)}{g^V_j(t)}+\frac{g^V_j(t)}{g^V_i(t)}-2\right).
		\end{align*}
		
		By Lemma \ref{lemma: gL is bounded}, we have a uniform upper bound $C_0$ only depending on $g_0$ such that $g^\ll_{m'}(t) \leq C_0$ for all $t \in [0,T)$, we then get that
		\begin{align*}
			g^\ll_{m'}(t) -g^\ll_{m'}(0) \leq& -\frac{b_0}{2}t+\int^t_{0}\frac{C_0}{2}F(s)ds\\
			\leq& -\frac{b_0}{2}t + \tilde{C}_0,
		\end{align*}
		where $\tilde{C}_0$ is an upper bound that only depends on $g_0$. Thus, $g^\ll_{m'}(t)$ goes to zero in finite time.
	\end{proof}
	
	\section{The set of $U\ltimes V$-invariant positive scalar curvature metrics}\label{section:connectedness}
			A consequence of the dynamical Alekseevskii conjecture would be that the set $\mathcal{M}^G_{psc}$ of $G$-invariant positive scalar metrics on $G/H$ is contractible. This is so because of the following nice argument that we owe to Christoph Böhm.
			\begin{proposition}\label{prop:contractability of global attract}
				Let $\mathcal{M}^G_{psc}$ be the space of $G$-invariant positive scalar metrics on $G/H$. If $\mathcal{M}^G_{psc}$ is a global attractor for the Ricci flow, then $\mathcal{M}^G_{psc}$ is contractible.
			\end{proposition}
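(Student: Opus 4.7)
The plan is to realize $\mathcal{M}^G_{psc}$ as the image of the full space of $G$-invariant metrics under a Ricci-flow retraction. Identified with the space of $\Ad(H)$-invariant positive-definite inner products on $\mm$, the ambient set $\mathcal{M}^G = \left(\Sym^2\mm\right)^H_+$ is an open convex cone in a finite-dimensional vector space, hence contractible. I will use $\Phi_t$, the time-$t$ map of the homogeneous Ricci flow (defined on $\{t < T_{\max}(g_0)\}$), to produce a continuous map $r\colon \mathcal{M}^G\to \mathcal{M}^G_{psc}$ together with a homotopy $H$ witnessing both $i\circ r\simeq\mathrm{id}_{\mathcal{M}^G}$ and $r\circ i \simeq \mathrm{id}_{\mathcal{M}^G_{psc}}$, where $i\colon\mathcal{M}^G_{psc}\hookrightarrow\mathcal{M}^G$ is the inclusion. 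Contractibility of $\mathcal{M}^G_{psc}$ will then follow from that of $\mathcal{M}^G$ via the resulting homotopy equivalence.

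The first ingredient is forward invariance of $\mathcal{M}^G_{psc}$ under the Ricci flow. By the scalar curvature evolution equation \eqref{eq:scalar curvature evolution}, along any homogeneous solution $\frac{d}{dt}\Rscal(g(t)) = \Vert\Ric_{g(t)}\Vert^2_{g(t)} \geq 0$, so $\Rscal$ is monotone nondecreasing, and positivity of the scalar curvature is preserved on the entire future interval $[0, T_{\max}(g_0))$.

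The main step is to construct a continuous ``stopping time'' $T\colon\mathcal{M}^G\to[0,\infty)$ satisfying $T(g_0) < T_{\max}(g_0)$ and $\Phi_{T(g_0)}(g_0)\in\mathcal{M}^G_{psc}$ for every $g_0$. By the global attractor hypothesis each $g_0$ admits an entry time $t(g_0)<T_{\max}(g_0)$ with $\Phi_{t(g_0)}(g_0)\in\mathcal{M}^G_{psc}$. Continuous dependence of the Ricci flow on initial data, openness of $\mathcal{M}^G_{psc}$, and lower semicontinuity of $T_{\max}$ jointly imply that the same value $t(g_0)$ works on an open neighborhood of $g_0$. Passing to a locally finite refinement with subordinate partition of unity $\{\phi_i\}$ and assigned times $t_i$, set $T(g_0)\coloneqq \sum_i \phi_i(g_0)\, t_i$. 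Being a convex combination of valid times at $g_0$, it satisfies $\min_i t_i \leq T(g_0)\leq \max_i t_i < T_{\max}(g_0)$ (minima and maxima over indices with $\phi_i(g_0)>0$), and forward invariance together with $T(g_0)\geq \min_i t_i$ upgrades $\Phi_{\min_i t_i}(g_0)\in\mathcal{M}^G_{psc}$ to $\Phi_{T(g_0)}(g_0)\in\mathcal{M}^G_{psc}$.

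With $T$ in hand, the homotopy $H(g_0, s)\coloneqq \Phi_{sT(g_0)}(g_0)$ witnesses $i\circ r \simeq \mathrm{id}_{\mathcal{M}^G}$ for $r(g_0)\coloneqq H(g_0,1)$, and by forward invariance its restriction to $\mathcal{M}^G_{psc}\times[0,1]$ lands in $\mathcal{M}^G_{psc}$ and gives $r\circ i\simeq \mathrm{id}_{\mathcal{M}^G_{psc}}$, concluding the argument. The hard part is building $T$: the naive first-entry-time function is only upper semicontinuous, so a direct construction fails at the boundary $\partial\mathcal{M}^G_{psc}$; the partition-of-unity averaging is exactly what is needed to smooth it out while simultaneously respecting forward invariance and the maximal time of existence of each Ricci flow solution.
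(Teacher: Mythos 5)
Your proposal is correct and takes a genuinely different route from the paper. The paper avoids the finite-extinction-time issue by multiplying the Ricci tensor by a bump function supported on $\mathcal{M}^G_{\Rscal<1}$, obtaining a globally defined flow; it then shows all homotopy groups of $\mathcal{M}^G_{psc}$ vanish by pushing a sphere map and a null-homotopy of it (in the contractible ambient space $\mathcal{M}^G$) forward by the modified flow for a uniform compact-entrance time, and finally invokes Whitehead's theorem (the set being an open subset of a manifold, hence a CW-complex). You instead work with the unmodified flow, handle its finite lifespan by constructing a continuous stopping time $T$ that is pointwise strictly less than $T_{\max}$ via a partition-of-unity averaging of locally valid entry times, and then build an explicit homotopy equivalence $r\colon \mathcal{M}^G \to \mathcal{M}^G_{psc}$ inverse to the inclusion, with both homotopies furnished by $(g_0,s)\mapsto \Phi_{sT(g_0)}(g_0)$. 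This avoids both the bump-function modification and the detour through Whitehead, at the modest cost of the partition-of-unity bookkeeping; your observation that the naive first-entry time is only upper semicontinuous, which is why the averaging is needed, is the right diagnosis. Both arguments rest on the same two ingredients that make the whole thing work: forward invariance of $\mathcal{M}^G_{psc}$ under the flow (monotonicity of scalar curvature) and continuous dependence on initial data, together with lower semicontinuity of the maximal existence time.
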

			\begin{proof}
				Let us first consider a bump function $b$ on the space of $G$-invariant metrics on $G/H$, supported on $\mathcal{M}^G_{\Rscal <1}$, namely, the $G$-invariant metrics with scalar curvature smaller than 1. Let us modify the Ricci tensor as follows,
				\[\widetilde{\ric}(g) \coloneqq b(g) \ric(g).\]
				By the results in \cite{laf}, a homogeneous Ricci flow solution has finite extinction time if and only if the scalar curvature, $\Rscal(g_t)$, blows up. Thus, $\widetilde{\ric}$ is such that its flow $\phi$ exists for all $t\geq0$, for all $g \in \mathcal{M}^G$. Moreover, since we assumed that $\mathcal{M}^G_{psc}$ is a global attractor, for any compact subset $K \subset \mathcal{M}^G$ the entrance time $T_K \coloneqq \inf \{t >0 \ \vert \ \phi_t(g) \in \mathcal{M}^G_{psc} , \forall g \in K \}$ is finite by the continuous dependence of $\phi$ on the initial conditions.
				
				Now, since $\mathcal{M}^G$ is contractible, every sphere map $f\colon \mathbb{S}^d \to \mathcal{M}^G_{psc}$ has a homotopy $h\colon \mathbb{S}^d\times [0,1]\to \mathcal{M}^G$, $h_0=f$, to a constant function $h_1\colon \mathbb{S}^d\to \mathcal{M}^G$. Let $T$ be the entrance time of $h(\mathbb{S}^d\times [0,1])$. The map $\tilde{h} \colon \mathbb{S}^d \times [0,T+1]  \to \mathcal{M}^G_{psc}$,
				\begin{align*}
					\tilde{h}_t(g) = \begin{cases*}
					 \phi_t\circ h_0(g), t \in [0,T], \\
					 \phi_T\circ h_{t-T}(g), t \in [T,T+1]
					\end{cases*}
				\end{align*}
				is a homotopy between $f$ and the constant function $h_1$, which shows that every homotopy group of $ \mathcal{M}^G_{psc}$ is trivial. Since this set is an open subset of the smooth manifold $\mathcal{M}^G$, it is in particular a CW-complex and thus contractible by \cite[Theorem 4.5]{hat}.
			\end{proof}
			
			The topology of the moduli space of Riemannian metrics satisfying a given curvature condition has long been a topic of interest in differential geometry and geometric analysis. This is especially due to the connection between topology and the existence of critical points of a given functional, as well as its relevance in deformation theory. The particular case of positive scalar curvature has been studied extensively over the past decades and remains an active area of research, with many intriguing open questions (see, e.g., \cite{ew}, \cite{rost}, \cite{ks}).

			By \cite[Theorem 3.2]{boe}, the dynamical Alekseevskii conjecture holds for coverings of an arbitrary compact homogeneous Riemannian manifold $K/H$. Hence, by Proposition \ref{prop:contractability of global attract}, we have that $\mathcal{M}^K_{psc}$ is contractible. However, there are examples of closed manifolds $M$ for which the moduli space of all positive scalar curvature metrics modulo the action via pullbacks by diffeomorphisms, $\mathcal{R}^+(M)$, is disconnected. In fact, Kreck and Stolz showed in \cite{ks} that a fixed closed smooth manifold $M$ may admit $G$-homogeneous positive scalar curvature metrics, for different $G$-actions, lying in distinct connected components of $\mathcal{R}^+(M)$. On the other hand, $\mathcal{R}^+(\mathbb{S}^2)$ is contractible \cite[Theorem 3.4]{rost}. Moreover, by \cite{mar}, the space $\mathcal{R}^+(M^3)$ is path-connected for any orientable closed 3-manifold, and the proof uses the Ricci flow. In more recent work, Bamler and Kleiner \cite{bakle} have improved this result, showing that $\mathcal{R}^+(M^3)$ is either empty or contractible. Finally, it is known that the connected component of the round metric on the sphere $\mathbb{S}^d$, $d\geq 2$, has an abelian fundamental group \cite[Corollary 5.2]{wal}.
			
			Even though we do not have the confirmation of the conjecture for the whole set of $U\ltimes V$-invariant metrics on stable $U\ltimes_\theta V$-homogeneous manifolds, we prove in this section that the set of $U\ltimes V$-invariant positive scalar curvature metrics is indeed contractible.
		
			A classic result on Riemannian submersions is the following (see {\cite[9.12]{be}}).
		\begin{proposition}\label{prop:G-Riemannian sub}
			Let $(M, g)$ be a Riemannian manifold and $G$ a closed subgroup of the isometry group of $(M, g)$. Assume that the projection $\pi$ from $M$ to the quotient space $M/G$ is a smooth submersion. Then there exists one and only one Riemannian metric $g^B$ on $B = M/G$ such that $\pi$ is a Riemannian submersion.
		\end{proposition}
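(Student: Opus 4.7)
The plan is to construct $g^B$ pointwise by using the $G$-invariant metric on the horizontal distribution and then transporting it to $B$ via $d\pi$, verifying well-definedness using the $G$-invariance of $g$ and smoothness via local sections.

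First, for each $p \in M$ I would let $\mathcal{V}_p \coloneqq \ker(d\pi_p) \subset T_pM$ be the vertical distribution, which by assumption on $\pi$ coincides with the tangent space to the $G$-orbit through $p$, and define the horizontal distribution $\mathcal{H}_p \coloneqq \mathcal{V}_p^{\perp_g}$. Since $\pi$ is a submersion, $d\pi_p|_{\mathcal{H}_p} \colon \mathcal{H}_p \to T_{\pi(p)}B$ is a linear isomorphism, and I would set
\[
g^B_{\pi(p)}(X,Y) \;\coloneqq\; g_p\bigl((d\pi_p|_{\mathcal{H}_p})^{-1} X,\;(d\pi_p|_{\mathcal{H}_p})^{-1} Y\bigr)
\]
for all $X,Y \in T_{\pi(p)}B$. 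This forces $d\pi_p|_{\mathcal{H}_p}$ to be a linear isometry, hence any metric satisfying the Riemannian submersion condition must agree with this one, which gives \emph{uniqueness} for free.

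Second, I would verify independence of the choice of $p$ in the fiber $\pi^{-1}(\pi(p))$. For $p'=\phi(p)$ with $\phi \in G$, the differential $d\phi_p \colon T_pM \to T_{p'}M$ is a $g$-isometry carrying $\mathcal{V}_p$ onto $\mathcal{V}_{p'}$, hence by orthogonality it maps $\mathcal{H}_p$ isometrically onto $\mathcal{H}_{p'}$. Combining this with $\pi \circ \phi = \pi$, which gives $(d\pi_{p'}) \circ (d\phi_p) = d\pi_p$, shows that the two candidate inner products on $T_{\pi(p)}B$ induced by $p$ and $p'$ coincide.

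Third, for smoothness I would use that every smooth submersion admits local smooth sections: for each $q \in B$ take a neighborhood $U$ and $\sigma \colon U \to M$ with $\pi \circ \sigma = \mathrm{id}_U$. The orthogonal projection $T_pM \to \mathcal{H}_p$ depends smoothly on $p$ because $\mathcal{V}$ is a smooth distribution and $g$ is smooth, so composing $d\sigma$ with this projection produces a smooth horizontal lift $\tilde X$ of any smooth local vector field $X$ on $U$. Then $g^B(X,Y)(q) = g(\tilde X, \tilde Y)(\sigma(q))$ is smooth. The main obstacle is really the well-definedness step: it hinges on the fact that $G$ acts by $g$-isometries preserving both fibers and their orthogonal complements, so all of the content is in recognizing that $\mathcal{H}$ is a $G$-invariant distribution and that $G$ acts isometrically on it fiberwise; once that is in place, both smoothness and uniqueness are routine.
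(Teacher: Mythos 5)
The paper does not give a proof of this proposition; it cites it as a classical fact, referring to Besse's \emph{Einstein Manifolds}, 9.12. Your argument is the standard proof of that fact and is correct: you define $g^B$ by pushing $g$ forward along $d\pi$ restricted to the horizontal distribution (which forces uniqueness), check well-definedness using that $G$ acts by isometries preserving orbits and hence mapping $\mathcal{H}_p$ isometrically onto $\mathcal{H}_{\phi(p)}$ with $\pi\circ\phi=\pi$, and obtain smoothness from local sections together with the smooth dependence of the orthogonal projection on the base point.
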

	
		With this proposition in hand, we now study our $U\ltimes V$-homogeneous manifold as the total space of the Riemannian submersion induced by the isometric action of the nilradical $V$. The first thing we observe is that in this case the $V$-action induces a homogeneous Riemannian submersion.
		\begin{lemma}\label{lemma:Homo Riem sub}
			Let $(M =G/H,g)$ be a homogeneous Riemannian manifold, with $G=U\ltimes N$ and $H < U$. Then $N$ acts properly, freely and isometrically on $(M,g)$, thus $M/N$ is a smooth manifold and $\pi \colon M \to M/N$ is a $U$-equivariant smooth submersion. Moreover, the unique Riemannian submersion metric $g^B$ induced by $\pi$ on $M/N \cong U/H$ is $U$-invariant.
		\end{lemma}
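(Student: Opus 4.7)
The plan is to realize $\pi \colon M \to M/N$ as the principal $N$-bundle arising from the short exact sequence $1 \to N \to G \to U \to 1$ combined with the inclusion $H \hookrightarrow U$, and then transfer the $U$-invariance of $g$ to $g^B$ via horizontal lifting. To set up $\pi$ algebraically: since $G = U \ltimes N$, the quotient $\phi \colon G \to G/N = U$ is a smooth surjective homomorphism that restricts to the identity on $U$. Because $H \subset U$, it descends to a smooth $U$-equivariant map $\pi \colon G/H \to U/H$ given by $unH \mapsto uH$ for $u \in U$, $n \in N$; the $U$-equivariance is a direct check using that $U$ acts on $N$ via $\theta$ and hence $u \cdot (u'n')H = (uu')\theta((uu')^{-1}u \cdot \cdot)\dots$, but concretely reduces to $\pi(L_u(gH)) = L_u(\pi(gH))$.

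Next I would verify that the fibers of $\pi$ coincide with the $N$-orbits on $M$. Writing any $g = un$ and using that $N$ is normal, one sees that $\phi(g_1)H = \phi(g_2)H$ holds iff $g_2 \in g_1 NH = g_1 HN$, and the condition $N \cap H \subset N \cap U = \{e\}$ makes the $N$-parametrization of each fiber unique. This identifies $\pi$ with a smooth principal $N$-bundle, so the $N$-action on $M$ is automatically free and proper, and $M/N \cong U/H$ as a smooth manifold. Freeness is also transparent on its own: if $ngH = gH$ then $g^{-1}ng \in H \cap N \subset U \cap N = \{e\}$, hence $n = e$. Isometricity of the $N$-action is immediate from the $G$-invariance of $g$.

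With these properties established, Proposition \ref{prop:G-Riemannian sub} yields a unique Riemannian metric $g^B$ on $M/N$ making $\pi$ a Riemannian submersion. For the $U$-invariance of $g^B$, fix $u \in U$; the isometry $L_u$ of $(M,g)$ permutes $N$-orbits, since $L_u(N\cdot x) = (uNu^{-1}) \cdot L_u(x) = N \cdot L_u(x)$ by normality. Hence $L_u$ preserves both the vertical distribution $\ker d\pi$ and its $g$-orthogonal horizontal complement. Combined with the $U$-equivariance $\pi \circ L_u = \bar{L}_u \circ \pi$, the $L_u$-invariance of $g$ pushes forward through $\pi_\ast$ to $\bar{L}_u$-invariance of $g^B$. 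The one point that requires some care is promoting the algebraic identification of fibers with $N$-orbits to a smooth principal bundle structure (and thus properness of the $N$-action) without circular reasoning; the semidirect product splitting resolves this cleanly because it provides a global section $U/H \hookrightarrow G/H$ for $\pi$, exhibiting $\pi$ as a \emph{trivial} principal $N$-bundle and dispelling any remaining concern.
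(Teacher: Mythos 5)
Your proof is correct and follows essentially the same route as the paper's: freeness via $g^{-1}ng \in H\cap N = \{e\}$, identification $M/N \cong U/H$, and $U$-invariance of $g^B$ via normality of $N$ forcing $L_u$ to preserve the vertical (hence horizontal) distribution and to commute with $\pi$. The only stylistic difference is that you package smoothness and properness through the (correct) observation that the semidirect-product splitting exhibits $\pi$ as a trivial principal $N$-bundle, whereas the paper invokes the quotient manifold theorem directly after noting that the $G$-action on $G/H$ is proper and $N$ is closed; and your $U$-invariance argument is stated at the level of distributions while the paper spells out the horizontal-lift computation $\bigl((L_u)_*X\bigr)^{\ho} = (L_u)_*X^{\ho}$ explicitly.
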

		\begin{proof}
			Since $N$ is a closed subgroup of $G$, we only need to show that it acts freely on $M$. This follows immediately from the fact $N$ is a normal subgroup of $G$ disjoint from $H$. Indeed, let $n \in N$ such that $npH = pH$. Then $p^{-1}np \in H\cap N =\{e\}$. By the quotient manifold theorem, we have that $\pi \colon M \to M/N$ is a Riemannian submersion and, by Proposition \ref{prop:G-Riemannian sub}, this submersion induces a unique metric $g^B$ on $B\coloneqq M/N$. 
			
			Notice that $B=M/N = N\backslash G/H \cong U/H$ and if $p\in G$ let us denote the class $NpH$ as $[p] \coloneqq NpH = \pi (pH)$. Let $u \in U$ and $X \in T_{[p]}B$, we claim that
			\[\left((L_u)_*X\right)^\ho = (L_u)_*X^\ho,\]
			that is, the horizontal lift of the push-forward $\left((L_u)_*X\right)$ is the push-forward by $(L_u)_*$ of the horizontal lift of $X$. This follows from the fact $N\lhd G$, because then $\pi\circ L_u = L_u \circ \pi $ and $\left(L_p\right)_*T_{eH}N =  T_{pH}N \subset G/H$, thus $\left(L_p\right)_*\ho_{eH} = \ho_{pH}$.
			
			From this we get immediately the result. Indeed let $u,w \in U$ and $X,Y \in T_{[w]}B$, then 
			\begin{align*}
				L_u^*g^B\left(X,Y\right)_{[w]} \coloneqq& \ g^B\left((L_u)_*X,(L_u)_*Y\right)_{[uw]}\\
				=& \ g\left((L_u)_*X^\ho,(L_u)_*Y^\ho\right)_{uwH}\\
				=& \ g\left(X^\ho,Y^\ho\right)_{wH}\\
				\eqqcolon& \ g^B\left(X,Y\right)_{[w]}.
			\end{align*}
		\end{proof}
		
		A consequence of Lemma \ref{lemma:Homo Riem sub} is that we can choose a reductive complement $\mm = \mm_\uu\oplus\nn \cong T_pM$, with $\mm_\uu \subset \uu$, such that 
		$\left.\pi_*\right\vert_{\mm_\uu}= \Id_{\mm_\uu}$ and $\mm_\uu \cong T_{[p]}B$.
			Moreover, we see that given a $U$-invariant metric $g^B$ on $U/H$, an $\Ad(H)$-invariant inner product $g^F$ on $\nn$, and a $U\ltimes N$-invariant horizontal distribution $\mathcal{H}$ (equivalently $\mathcal{H}_p$ must be $\Ad(H)$-invariant) we can construct a $U\ltimes V$-invariant metric $g=g\left(g^B,g^F, \mathcal{H}\right)$ on $U\ltimes V$.
		
		Another simple observation is that if you have two distinct $U\ltimes N$-invariant horizontal distributions $\mathcal{H}^1$ and $\mathcal{H}^2$, then we can uniquely determine $\mathcal{H}^2_p$ (and thus $\mathcal{H}^2$) as a graph $\Id + \phi$ for a given $\Ad(H)$-equivariant linear map $\phi \colon \mathcal{H}^1_p \to \nn$. 
		
		We are ready now to prove the main result of this section.
	\begin{theorem}[Theorem \ref{thm:B}]\label{thm:connectedness of scal>0}
		Let $M$ be a  stable $U\ltimes_\theta V$-homogeneous manifold. Then the set of $U\ltimes V$-invariant positive scalar curvature metrics on $M$ is contractible.
	\end{theorem}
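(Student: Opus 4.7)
The plan is to combine a scalar curvature nondecreasing deformation onto the subset $\mathfrak{S}_{U\ltimes V}$ of $\theta$-adapted standard metrics with the Ricci flow restricted to $\mathfrak{S}_{U\ltimes V}$, adapting the attractor-based contractibility argument of Proposition \ref{prop:contractability of global attract}. Via the homogeneous Riemannian submersion $\pi\colon M\to U/H$ of Lemma \ref{lemma:Homo Riem sub}, every $U\ltimes V$-invariant metric $g$ is encoded as a triple $(g^B,g^F,\phi)$: a $U$-invariant base metric on $U/H$, an $\Ad(H)$-invariant fiber inner product $g^F$ on $V$, and an $\Ad(H)$-equivariant linear map $\phi\colon \mm_\uu\to V$ whose graph over a fixed reference horizontal distribution records the horizontal distribution $\mathcal{H}$ of $g$.

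First I would linearly scale $\phi\mapsto(1-s)\phi$, $s\in[0,1]$. O'Neill's submersion formula writes $\Rscal(g)$ as $\Rscal(g^B)\circ\pi+\Rscal(g^F)$ plus nonpositive terms quadratic in the O'Neill tensors $A,T$ and the fiber mean curvature, of which only $A$ depends on $\phi$. At $\phi=0$ the horizontal distribution becomes integrable, $A$ vanishes, and the endpoint is a standard metric in the sense of \cite[Definition 2.1]{bl22}; thus the deformation is scalar nondecreasing. I then deform $g^F$ through $\Ad(H)$-invariant inner products on $V$ into one making the weight decomposition $V=\bigoplus_\alpha V^\alpha$ of Remark \ref{remark:root decomp} orthogonal, for instance by progressively damping its off-diagonal blocks; the algebraic identities of Proposition \ref{prop:A bounds} and Proposition \ref{prop:Ricci-U} should allow one to check that the off-diagonal contributions to $\Rscal$ are nonpositive, making this deformation scalar nondecreasing as well. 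The endpoint lies in $\mathfrak{S}_{U\ltimes V}$.

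Concatenating these produces a continuous $\Phi_s\colon \mathcal{M}^{U\ltimes V}\to\mathcal{M}^{U\ltimes V}$, $s\in[0,1]$, with $\Phi_0=\Id$, $\Phi_1(\mathcal{M}^{U\ltimes V})\subset\mathfrak{S}_{U\ltimes V}$, and $\Phi_s$ preserving $\mathcal{M}^{U\ltimes V}_{psc}$. If the universal cover of $M$ is diffeomorphic to $\mathbb{R}^n$, then $\mathcal{M}^{U\ltimes V}_{psc}=\emptyset$ by \cite{ber}, trivially contractible. Otherwise Theorem \ref{thm:A} gives finite extinction time for every Ricci flow starting in $\mathfrak{S}_{U\ltimes V}$, so by \cite{laf} its scalar curvature becomes positive in finite time; thus the positive scalar curvature metrics in $\mathfrak{S}_{U\ltimes V}$ form a global attractor for the Ricci flow restricted to the invariant subset $\mathfrak{S}_{U\ltimes V}$. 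Pre-composing the modified Ricci flow of Proposition \ref{prop:contractability of global attract} (run inside $\mathfrak{S}_{U\ltimes V}$) with $\Phi_1$ sends any compact family in $\mathcal{M}^{U\ltimes V}_{psc}$ into a compact subset of $\mathcal{M}^{U\ltimes V}_{psc}\cap\mathfrak{S}_{U\ltimes V}$ and contracts it there, trivializing all homotopy groups of $\mathcal{M}^{U\ltimes V}_{psc}$; contractibility then follows from \cite[Theorem 4.5]{hat}.

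The hard part is the scalar curvature monotonicity of the second deformation, since varying $g^F$ simultaneously changes the intrinsic fiber geometry and the coupling terms with the base, and the O'Neill argument of the first step no longer applies verbatim. My expectation is that performing this step only after $\phi$ has been driven to zero makes the Ricci formulas specialize to the clean form of Section \ref{section:algebraic bounds}, and that a careful choice of path, for instance a gradient flow of a natural convex functional on the space of $\Ad(H)$-invariant inner products on $V$ that is adapted to the weight decomposition, yields the required monotonicity.
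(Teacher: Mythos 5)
Your overall strategy matches the paper's: deform an arbitrary invariant metric into $\mathfrak{S}_{U\ltimes V}$ in a scalar-curvature-nondecreasing way, then use Theorem~\ref{thm:A} together with an attractor argument \`a la Proposition~\ref{prop:contractability of global attract}. The first deformation $\phi\mapsto(1-s)\phi$ is also the paper's first step. However there are two genuine gaps.

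First, you work with $\Rscal$ directly and assert that along $\phi\mapsto(1-s)\phi$ only the O'Neill $A$-term varies. This is not quite true: the mean-curvature term $-\Vert\Hm_g\Vert^2_g$ in the homogeneous scalar curvature formula also depends on $\phi$ (the vector $\Hm_g$ is defined by $g(\Hm_g,\cdot)=\tr\ad(\cdot)$, and when $\mm_\uu\not\perp_g V$ it acquires a $V$-component whose $g$-norm depends on $\phi$). The paper sidesteps this by passing to the \emph{unimodular} scalar curvature $\Rscal^\star = \Rscal + \Vert\Hm_g\Vert^2_g$, for which the decomposition \eqref{eq:deform scal star} really does have the property you want: along the $\phi$-scaling only the O'Neill integrability term $\sum g([X_i,X_j],V_k)^2$ changes, and it scales by $(1-s)^2$. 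Since $\Rscal>0\Rightarrow\Rscal^\star>0$, one shows $\mathcal{M}^G_{upsc}$ is contractible, and then transfers back to $\mathcal{M}^G_{psc}$ via the fact that $\Rscal^\star$ satisfies the same evolution equation under the unimodular flow as $\Rscal$ does under the Ricci flow (\cite[Lemma 3.5]{bl18}, \cite[Theorem 1.1]{laf}). Your argument needs this intermediate object or else an explicit control of $\Vert\Hm_g\Vert^2_g$.

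Second, and more seriously, your second deformation — ``progressively damping off-diagonal blocks'' of $g^F$ to orthogonalize the weight decomposition — is not justified and is not what is needed. A $\theta$-adapted standard metric requires not only orthogonal weight blocks but also that on each $V^\a$ the operator $\theta(Z)-\a(Z)\Id$ is $g^F$-normal; block-orthogonality alone is insufficient. The paper's trick is to regard changes in $g^F$ as conjugations of $\theta$ in $\GL(V)$ and run the \emph{negative gradient flow of the norm functional} (moment map flow of real GIT, cf. Remark~\ref{rem:stable = there is minimum} and \cite[Lemma 1.5.1]{bl20}). Along this flow the coupling term $\sum g^F_t(\theta(X_i)V_j,V_k)^2$ decreases monotonically to its minimum, while $\sum_i\tr\theta_t(X_i)^2 = \sum_i\tr\theta(X_i)^2$ stays constant because conjugation preserves trace; hence $\Rscal^\star$ is nondecreasing. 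Stability of $\theta$ guarantees the flow converges to $\SO(V)\cdot\theta_{\min}$, whose points are exactly the inner products making $\theta$ act by normal operators, i.e.\ $\theta$-adapted standard metrics (Remark~\ref{remark:minimal is equivalent to}). You gesture at ``a gradient flow of a natural convex functional'' in your last paragraph, which is the right instinct, but the crucial point you miss is the reinterpretation of the $g^F$-deformation as a $\GL(V)$-conjugation of $\theta$: that is what makes the trace term invariant and identifies the limit. Without this, your proposed path has no reason to be $\Rscal$-nondecreasing.
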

\begin{proof}
	Let us consider the reductive decomposition $\gg=\mm_\uu\oplus V$. Recall the formula for the scalar curvature $\Rscal(g)$ of a homogeneous Riemannian manifold \cite[Corollary 7.39]{be},
	\[\Rscal(g)= -\frac{1}{2}\tr\Bk_g-\frac{1}{4}\Vert [\cdot,\cdot]\Vert^2_g-\Vert \Hm_g \Vert^2_g,\]
	and let us rewrite it taking into account orthonormal bases $\{V_i\}$ for $V$ and $\{X_i\}$ for $V^\perp$,
	\begin{align*}
		\Rscal = \Rscal_{U/H} -\frac{1}{4} \sum_{i,j,k}g\left(\left[X_i,X_j\right],V_k\right)^2-\frac{1}{2}\sum_i\tr\theta(X_i)^2 -\frac{1}{2}\sum_{i,j,k}g\left(\theta(X_i)V_j,V_k\right)^2-\Vert \Hm_g \Vert^2_g,
	\end{align*}
where $\theta(X_i) \coloneqq \left.\ad X_i\right|_V$. 

Note that if $\Rscal(g) >0$, then $\Rscal^\star (g)>0$, where
\begin{equation}\label{eq:deform scal star}
	\Rscal^\star \coloneqq \Rscal_{U/H}-\frac{1}{4} \sum_{i,j,k}g\left(\left[X_i,X_j\right],V_k\right)^2-\frac{1}{2}\sum_i\tr\theta(X_i)^2-\frac{1}{2}\sum_{i,j,k}g\left(\theta(X_i)V_j,V_k\right)^2
\end{equation}
is the \textit{unimodular scalar curvature}.

We show first that the set of positive unimodular scalar curvature, $\mathcal{M}^G_{upsc}$, is contractible.

Observe that we can interpret this formula in terms of the homogeneous Riemannian submersion $\pi\colon \left(U\ltimes V\right)/H \to U/H$, with the metric $g=g\left(g^B,g^F,\phi\right)$ being constructed from a $U$-homogeneous metric $g^B$ in the base $U/H$, an $\Ad(H)$-invariant inner product $g^F$ on the fiber $V$, and an $\Ad(H)$-equivariant linear map $\phi \colon \mm_\uu \to V$. 

We now note that there is a deformation retract of $\mathcal{M}^G_{upsc}$ to the set of metrics $g$ such that $\Rscal^\star(g) >0$ and $g\left(\mm_\uu,V\right)=0$. Indeed, we just need to take 
\[r_t\left(g\left(g^B,g^F,\phi\right)\right)\coloneqq g\left(g^B,g^F,(1-t)\phi\right). \]
If $\{X_i(g)= U_i + \phi U_i\}$ is the $g$-orthonormal frame obtained by the horizontal lift of a fixed arbitrary $g^B$-orthonormal frame $\{U_i = \pi_*X_i\}$ with respect to the horizontal distribution determined by $\phi$, then let
\[X_i(t)\coloneqq X_i(r_t(g)) = U_i + (1-t)\phi U_i.\]

It is easy to see that the only term in equation \eqref{eq:deform scal star} that changes along $r_t(g)$ is the O'Neill integrability tensor $\sum_{i,j,k}g\left(\left[X_i,X_j\right],V_k\right)^2$, which, in this case where $V$ is abelian, is homogeneous in $(1-t)$ and decreases until it reaches zero at $r_1(g)$. Namely,
\begin{align*}
	r_t(g)\left(\left[X_i(t),X_j(t)\right],V_k\right)^2 =& \ g^F\left(\left[U_i,(1-t)\phi U_j\right]+\left[(1-t)\phi U_i,U_j\right],V_k\right)^2\\
	=& \ (1-t)^2g^F\left(\left[X_i(0),\phi U_j\right]+\left[\phi U_i,X_j(0)\right],V_k\right)^2.
\end{align*}
Thus, $\Rscal^\star(r_t(g))$ is nondecreasing till the metric $r_1(g)$, where the horizontal distribution is given by $\mm_\uu$.

Since $\theta$ is stable, there is a minimal representation $\theta_{\min}$ in the orbit $\GL(V)\cdot \theta$ for the conjugation action of $\GL(V)$ on $\End\left(\uu,\gl(V)\right)$ (see Remark \ref{rem:stable = there is minimum}). Notice that the only critical points for the norm in the orbit $\GL(V)\cdot \theta$ are minima (see \cite[Lemma 1.5.1]{bl20}). Thus, if we consider the equivalence between changing $g^F$ and changing $\theta$ via conjugation, we may deform $\theta$ via the negative moment map flow $\psi$ keeping $\Rscal^\star$ positive. Indeed, the norm of $\theta_t\coloneqq \psi_t\left(\theta\right)$, appearing in formula \eqref{eq:deform scal star} and given by
\[\sum_{i,j,k}g^F_t\left(\theta(X_i)V_j,V_k\right)^2\]
decreases to zero as $\theta_t$ approaches $\SO(V) \cdot \theta_{\min}$. Moreover, since $\theta_t= P_t^{-1}\theta P_t$, for $P_t \in \GL(V)$, then $\tr \theta_t^2 \equiv \tr \theta^2$. Therefore, $\Rscal^\star(g_t)>0$ is preserved along this path. 

Now let $f$ be a representative of the $d^{\text{th}}$-homotopy group of $\{g \in \mathcal{M}^G_{upsc} \ \vert \ g\left(\mm_\uu, V\right)=0\}$ and the path $f_t(x) = \left(g^B(x),\theta_t(x),\mm_\uu\right)$. By compactness of $f(\mathbb{S}^d)$, there is a $T$, such that for all $t \geq T$, $f_t(\mathbb{S}^d)$ is arbitrarily close to $\SO(V)\cdot \theta_{\min}$. By taking a tubular neighborhood of $\SO(V)\cdot \theta_{\min}$, this gives us a homotopy from $f$ to a map $\tilde{f} \colon \mathbb{S}^d \to \SO(V)\cdot \theta_{\min}$, preserving $\Rscal^\star >0$.

By remark \ref{remark:minimal is equivalent to}, we have that the metric defined by $g\left(g^B,\theta_{\min},\mm_\uu \right)$ is a $\theta$-adapted standard metric. By Theorem \ref{thm:main}, the space of $\Rscal^\star>0$ restricted to the space of $\theta$-adapted standard metrics, $\mathfrak{S}_{U\ltimes V}$, is a global attractor. Thus, by Proposition~\ref{prop:contractability of global attract} applied to the unimodular Ricci flow, we conclude that $\mathcal{M}^G_{upsc}\cap\mathfrak{S}_{U\ltimes V}$ is contractible. Therefore, we have a homotopy $\tilde{f}_t \left(\mathbb{S}^d\right) \subset \mathcal{M}^G_{upsc}\cap\mathfrak{S}_{U\ltimes V}$ to a constant function, which finally shows that $\mathcal{M}^G_{upsc}$ is contractible.

By \cite[Lemma 3.5]{bl18}, the unimodular scalar curvature obeys the same evolution equation along the unimodular Ricci flow as the scalar curvature does along the Ricci flow. Hence, by \cite[Theorem 1.1]{laf}, we have that $\mathcal{M}^G_{psc}$ is a global attractor for metrics in $\mathcal{M}^G_{upsc}$. Therefore, again by a simple adaptation of Proposition \ref{prop:contractability of global attract}, we conclude $\mathcal{M}^G_{psc}$ is contractible.
\end{proof}
\begin{remark*}
	It is important to notice that we used the fact that $V$ is abelian when deforming the horizontal distribution to an integrable one. It would be interesting to know if one could refine the argument and find a positive scalar curvature preserving path for the case when $V$ is nilpotent.
\end{remark*}

	We do not really need Theorem \ref{thm:main} in the proof of the theorem above. Indeed, in our setting we assume that $U$ has a compact Lie algebra, so its semisimple factor is compact. Thus, by \cite[Proposition 3.9]{jp}, the condition $\theta=\theta_{\min}$ implies that $\kk$ acts skew-symmetrically on $V$ and that $\hat{\zz}$ acts via operators whose transposes commute with $\theta(\uu)$ (see \cite[Lemma 3.6]{jp}). These properties are independent of the metric restricted to $\mm_\uu$. Moreover, the condition $\theta=\theta_{\min}$, together with $\mm_\uu \perp_g V$, implies that $\left.\ric\right|_{V\times V}\equiv 0$ (see Remark \ref{remark:minimal is equivalent to}). Hence, these conditions combined are invariant under the Ricci flow. Finally, using the formula in Remark \ref{remark:Ricci_U in terms of intrinsic and V-action} for the Ricci tensor in the $T_pK/H$ direction, we immediately obtain that the Ricci flow in this case has finite extinction time.
	\begin{remark*}\label{remark:general GIT-stable}
			In general, one could define stable $U\ltimes_\theta N$-homogeneous manifolds for $\gg = \uu\ltimes_\theta \nn$, with $\nn$ a nilpotent ideal and $\uu$ reductive (not necessarily compact), where $\zz(\uu)$ acts on $\nn$ via semisimple operators. In this setting, there exist inner products on $\uu$ and $\nn$ such that $\theta$ is minimal (see \cite{mos} and \cite[Lemma 3.1]{jp}). However, note that the existence of a minimum for the conjugation action of $\GL(V)$ on $\End\left(\uu,\gl(V)\right)$ may depend on the metric $\left.g\right|_{\uu\times\uu}$. As a consequence, the dynamical setting we are investigating becomes more complicated in this more general case.
	\end{remark*}
	
	We conclude this section by proving the following observation on the unimodular Ricci flow invariance of a particularly nice set of $U\ltimes_\theta N$-invariant metrics, for the case where $U\ltimes_\theta N$ is stable in the sense of Remark \ref{remark:general GIT-stable}.
	\begin{proposition}\label{prop:nilsoliton+closed under transpose is RF inv}
		Let $M$ be an $U\ltimes_\theta N$-homogeneous manifold, where $U$ is a reductive Lie group and $N$ the nilradical, with reductive decomposition $\mm_\uu\oplus\nn$, $\mm_\uu \subset \uu$. Then the set of $U\ltimes_\theta N$-invariant Riemannian metrics $g$ such that $g\left(\mm_\uu,\nn\right)=0$, $\left(N,\left.g\right|_N\right)$ is a nilsoliton, and $\theta(\uu)^{t_g} \subset \Der\left(\nn\right)$ is Ricci flow invariant.
	\end{proposition}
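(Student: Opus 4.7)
My plan is to verify infinitesimally that each of the three conditions defining this family is preserved by the Ricci flow, that is, to check that $\ric_g$ lies in the tangent space to this set at every such $g$.

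For the orthogonality $g(\mm_\uu, \nn) = 0$, I would compute $\ric_g(Y, A)$ for $Y \in \mm_\uu$ and $A \in \nn$, following the pattern of Proposition \ref{prop:Ricci invariant semidirect}. The Killing form term vanishes since $\nn$ is a nilpotent ideal, making $\ad X \circ \ad A$ nilpotent on $\gg$ for all $X \in \gg$, $A \in \nn$. Because $\tr \ad A = 0$ for $A \in \nn$, one has $\Hm_g \perp_g \nn$, hence $\Hm_g \in \mm_\uu$, and the mean curvature contribution to $\ric(Y, A)$ cancels since $[\Hm_g, \cdot]$ preserves the bigrading $\mm_\uu \oplus \nn$. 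The quadratic piece of $\mo(Y, A)$ vanishes because $[X_i, X_j]_\mm$ lies entirely in $\mm_\uu$ or entirely in $\nn$. The only surviving term is $\tr_\nn(\theta(Y)^{t_g} \circ \ad_\nn(A))$, and here condition (3) enters decisively: $\theta(Y)^{t_g} \in \Der(\nn)$ preserves the lower central series $\nn^{(k)}$, while $\ad_\nn(A)(\nn^{(k)}) \subset \nn^{(k+1)}$ strictly decreases the filtration; the composition is therefore nilpotent on $\nn$ and traceless.

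Next, for the nilsoliton condition, a direct calculation for $A, B \in \nn$ using Parseval's identity on a $g|_N$-orthonormal basis of $\nn$ yields
\begin{equation*}
\ric_g(A, B) \;=\; \ric_N(A, B) \;+\; \tfrac{1}{2}\, g\bigl(E_g A, B\bigr),
\end{equation*}
where $E_g = \sum_i [\theta(U_i), \theta(U_i)^{t_g}] - (\theta(\Hm_g) + \theta(\Hm_g)^{t_g})$ for a $g$-orthonormal basis $\{U_i\}$ of $\mm_\uu$. Each $\theta(U_i)$ is automatically in $\Der(\nn)$ (inner action on an ideal), and by condition (3) so are the transposes; since $\Der(\nn)$ is a Lie subalgebra of $\End(\nn)$ closed under sums, $E_g \in \Der(\nn)$, and a short verification shows $E_g$ is $g|_N$-symmetric. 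Combined with the nilsoliton identity $\Ric_N = c\,\Id + D_0$, with $D_0 \in \Der(\nn)$ symmetric, this gives $\Ric_{g, N} = c\,\Id + (D_0 + \tfrac{1}{2} E_g)$ with $D_0 + \tfrac{1}{2} E_g$ a symmetric derivation. Lauret's classification identifies the set of nilsolitons on $\nn$ with the orbit $\mathbb{R}_{+} \cdot \Aut(\nn) \cdot g|_N$, whose tangent space at $g|_N$ is $\mathbb{R}\, g|_N + \{g|_N(D_s \cdot, \cdot) : D_s \in \Der(\nn),\ D_s^{t_g} = D_s\}$; hence $-2 \ric_g|_{\nn \times \nn}$ is tangent to this orbit and the nilsoliton structure persists.

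For condition (3), I would differentiate the identity $\theta(Y)^{t_g} = P^{-1} \theta(Y)^T P$, where $P$ is the operator form of $g|_N$ relative to a fixed background metric, to obtain
\begin{equation*}
\tfrac{d}{dt}\, \theta(Y)^{t_{g(t)}} \;=\; -2\bigl[\theta(Y)^{t_{g(t)}},\, \Ric_{g, N}\bigr].
\end{equation*}
By the previous step $\Ric_{g, N}$ equals a scalar plus a derivation of $\nn$, and $\Der(\nn)$ is closed under the Lie bracket in $\End(\nn)$, so whenever $\theta(Y)^{t_g} \in \Der(\nn)$ its time derivative also lies in $\Der(\nn)$. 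A standard ODE continuation argument (using closedness of $\Der(\nn)$ in $\End(\nn)$ and smoothness of the flow) concludes that $\theta(Y)^{t_{g(t)}}$ remains in $\Der(\nn)$ along the flow.

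The main obstacle I anticipate is the nilsoliton step: carefully identifying the tangent space to the nilsoliton orbit inside $\Sym^2(\nn)_+$ and invoking Lauret's uniqueness to know that the orbit exhausts all nilsolitons, so that ``tangency at every point'' genuinely forces orbit invariance. Once these ingredients are in place, the algebraic verification that $E_g$ is a symmetric derivation is exactly what is needed to deliver the tangent direction.
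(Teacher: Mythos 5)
Your proposal follows essentially the same route as the paper: verify infinitesimal invariance of each of the three conditions, using (i) that $\nn$ lies in the kernel of the Killing form and that $\Hm_g\in\mm_\uu$ to reduce the mixed term $\ric_g(Y,A)$ to $\tr(\theta(Y)^{t_g}\circ\ad A)$ and kill it by a nilpotency argument; (ii) Lauret's characterization $\Ric_N=c\,\Id+D$ to show $\Ric_g|_\nn$ is again of scalar-plus-symmetric-derivation form; and (iii) differentiating $\theta^{t_g}=P^{-1}\theta^T P$ to get a bracket with $\Ric_g|_\nn$, which lands back in $\Der(\nn)$. One small improvement over the paper's exposition: where the paper merely alludes to ``inner derivations form an ideal'' and refers back to the abelian case, you spell out the correct nilpotency mechanism (a derivation preserves the lower central series while $\ad A$ strictly lowers it, so the composition is filtration-decreasing, hence nilpotent, hence traceless). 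Your signs in $E_g$ and in $\tfrac{d}{dt}\theta^{t_g}=-2[\theta^{t_g},\Ric_g|_\nn]$ are the correct ones (the paper's displayed formulas drop a sign on the mean-curvature term and a factor of $-2$, though these typos do not affect the conclusion). The concern you flag about the nilsoliton step is legitimate but resolved exactly as you anticipate: Lauret's uniqueness identifies the nilsoliton set with the $\mathbb{R}_+\cdot\Aut(\nn)$-orbit, an embedded submanifold, so tangency of $-2\ric_g|_{\nn\times\nn}$ at every point yields invariance by standard ODE theory.
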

\begin{proof}
	Let $X \in \mm_{\uu}$ and $A \in \nn$ and let us consider the mixed Ricci term (see equation \eqref{eq:ricci XY formula})
	\begin{align*}
		\ric_g\left(X,A\right) =& -\frac{1}{2}\Bk\left(X,A\right) +\mo_g\left(X,A\right)-\h_g(X,A)\\
		=&\mo_g\left(X,A\right)\\
		=&-\frac{1}{2}\sum_i g\left([X,A_i]_{\mm},[A,A_i]\right).
	\end{align*}
Where we used in the second equality that the nilradical is in the kernel of the Killing form $\Bk$, thus $\Bk(X,A)=0$; and that $\Hm_g \in \mm_\uu$, thus $\h_g(X,A)=0$. And in the third equality that $\uu$ is a subalgebra, and that $\nn$ is an ideal.

Therefore,
	\begin{align*}
		\ric_g\left(X,A\right) =&-\frac{1}{2}\sum_i g\left([X,A_i]_{\mm},[A,A_i]\right)\\
	=& \tr \left(\theta\left(X\right)^{t_g}\circ\ad\left(A\right)\right)=0.
\end{align*}
This holds because we assumed that $\theta\left(X\right)^{t_g}$ is a derivation, and the inner derivations form an ideal on the full space of derivations. Consequently, we have that $\theta\left(X\right)^{t_g}\circ\ad\left(A\right)$ is a nilpotent operator (see the argument in the proof of Proposition \ref{prop:Ricci invariant semidirect}).

Now we need to show that the conditions of being a nilsoliton and of $\theta(\uu)^{t_g} \subset \Der\left(\nn\right)$ are also preserved. 

Indeed, by \cite[Proposition 1.1]{lau01}, the nilsoliton metric $\left.g\right\vert_N$ is an algebraic Ricci soliton. That is, there exists a $\left.g\right\vert_{\nn\times \nn}$-symmetric derivation $D \in \Der(\nn)$ such that 
\begin{equation}
	\Ric_N =  c\Id + D
\end{equation}
for a constant $c$. Thus, we have that
\begin{align*}
	\left.\Ric_g\right\vert_\nn =& \  \Ric_N + \frac{1}{2}\sum_k\left[\theta\left(U_k\right),\theta\left(U_k\right)^{t_g}\right]+S^g(\ad H_g)
	= c\Id + \tilde{D}_g,
\end{align*}
where $\tilde{D}_g=\frac{1}{2}\sum_k\left[\theta\left(U_k\right),\theta\left(U_k\right)^{t_g}\right]+S^g(\ad H_g)+D$ is an $\Ad(H)$-equivariant symmetric derivation of $\nn$, since each independent term is. Therefore, the derivative of the metric, $\left.\dot{g}\right\vert_N$, is tangent to the space of nilsoliton metrics.

Finally, observe that if $\left.g\right\vert_{\nn\times\nn}=\langle P \cdot,\cdot\rangle$ for a positive definite operator $P$ and a given background metric $\langle\cdot,\cdot\rangle$, then $\theta^{t_g} = P^{-1}\theta^tP$, where this last transpose is taken with respect to $\langle\cdot,\cdot\rangle$. We then have that
\begin{align*}
	\dot{\theta}^{t_g} =& \ \left[\theta^{t_g}, \left.\Ric_g\right\vert_\nn\right] =  \left[\theta^{t_g}, \tilde{D}_g\right] \in \Der(\nn).
\end{align*}
\end{proof}

\begin{remark*}
	It follows immediately from the proof of Proposition \ref{prop:nilsoliton+closed under transpose is RF inv} that the conditions on $g$ above are also invariant under the unimodular Ricci flow. 
\end{remark*}

\end{document}